\documentclass[11pt]{article}

\usepackage{lmodern}
\usepackage{amsmath,amssymb,amsthm,amsfonts}
\usepackage{geometry}
\usepackage[hidelinks]{hyperref}
\hypersetup{pdfauthor={Wei Xie},
pdftitle={Exponent Profiles of Oscillatory Matrices under Matrix and Inverse Sparsity}}

\theoremstyle{plain}
\newtheorem{theorem}{Theorem}[section]
\newtheorem{lemma}[theorem]{Lemma}
\newtheorem{corollary}[theorem]{Corollary}
\newtheorem{proposition}[theorem]{Proposition}

\theoremstyle{definition}
\newtheorem{definition}[theorem]{Definition}
\newtheorem{example}[theorem]{Example}

\theoremstyle{remark}

\numberwithin{equation}{section}

\title{Exponent Profiles of Oscillatory Matrices under Matrix and Inverse Sparsity}
\author{Wei Xie}
\date{}

\begin{document}
\maketitle

\begin{abstract}
We classify the simultaneous half-bandwidths \(p,q\) of a symmetric
oscillatory matrix \(A\) and its inverse. At order \(n\ge2\), the
possible pairs are \((1,n-1)\), \((n-1,1)\), and all pairs with
\(2\le p,q\le n-1\). Every pair has an integer realization of determinant one.

Let \(e_k(A)\) be the least positive integer \(m\) for which all
minors of order \(k\) of \(A^m\) are positive. We prove
\[
e_k(A)\le
\min\{\max\{k,n-p\},\max\{n-k,n-q\}\}\qquad(1\le k<n).
\]
When \(p+q\ge n\), one realization attains all these bounds
simultaneously.
The maximum of \(\sum_{k=1}^{n-1}e_k(A)\) is determined for every
bandwidth pair. Write \(A=U^T\Delta U\), with \(U\) unit upper
triangular and \(\Delta\) positive diagonal. Let \(\ell(U)\) be the
least number of positive elementary upper bidiagonal factors of \(U\).
At fixed \((p,q)\), its minimum is \(\max\{n-1,p+q-1\}\).
The bound
\(\sum_{k=1}^{n-1}e_k(A)+\ell(U)\le n(n-1)/2+n-1\) is sharp at
each factor count; equality is characterized using \(U\), \(U^2\)
and the zero corner minors of \(A\). In particular,
\(\sum_{k=1}^{n-1}e_k(A)\le n(n-1)/2\), with equality exactly
for basic matrices.

In the basic oscillatory class, without a symmetry assumption,
a positive integer vector \((f_1,\ldots,f_{n-1})\) is realizable
if and only if \(f_k\le\max\{k,n-k\}\) and
\(f_k+f_{n-k}\ge n\) for \(1\le k<n\), and
\(\lvert f_{k+1}-f_k\rvert\le1\) for \(1\le k<n-1\).
Explicit factor orders
realize every such vector.
\end{abstract}

\noindent\textbf{Keywords:}
oscillatory matrix; exponent profile; matrix bandwidth; basic oscillatory matrix.

\noindent\textbf{2020 Mathematics Subject Classification:} 15B48, 05A05.

\section{Introduction}
\label{sec:1}

Throughout, matrices are real and have order \(n\ge2\).  A matrix is
\emph{totally nonnegative} (TN) if all its minors are nonnegative and
\emph{totally positive} (TP) if all its minors are positive.  A square TN
matrix \(A\) is oscillatory if some positive integer power of \(A\) is
TP.  The least positive integer \(m\) for which \(A^m\) is TP is called
the \emph{exponent of \(A\)}, denoted by
\[
e(A)=\min\{m\ge1:A^m\text{ is TP}\}.
\]
Gantmacher and Krein proved that \(e(A)\le n-1\), together with the
associated spectral and variation-diminishing results \cite{GantmacherKrein}.  General
accounts of total positivity are given in
\cite{Karlin,FallatJohnson,Pinkus}.

The exponent \(e(A)\) depends on the zero minors of the matrix.  Fallat used
corner minors and elementary bidiagonal factorizations to study oscillatory
powers \cite{FallatRemark}.  Fallat and Liu characterized the matrices
for which \(e(A)=n-1\) \cite[Theorem~10]{FallatLiu}.  Zarai and
Margaliot determined the exponent for further factor patterns and obtained
upper bounds by separating the two triangular directions
\cite{ZaraiMargaliot}.

A nonsingular TN matrix is basic oscillatory if it admits an elementary
bidiagonal factorization with exactly one positive factor at each adjacent
index on each triangular side.  For this class,
Fallat and Liu expressed \(e(A)\) as the maximum of the two
triangular exponents \cite[Theorem~12]{FallatLiu}.  They also computed
the individual corner times from a planar normal form and proved, for
each triangular direction, that the times at complementary minor orders
sum to \(n\)
\cite[Lemmas~13 and~15]{FallatLiu}.  Lemma~15 also gives the sum
\(n(n-1)/2\) over all nontrivial minor orders in each direction.
In particular, every basic
oscillatory matrix satisfies \(e(A)\ge\lceil n/2\rceil\)
\cite[Theorem~16]{FallatLiu}.  These corner times retain information
that is lost upon taking their maximum.

Alseidi and Garloff considered the exponent at a prescribed order of minors
\cite{AlseidiGarloff}.  For an oscillatory matrix \(A\), its exponent at minor order \(k\) is
\[
e_k(A)=\min\{m\ge1:\text{all minors of order \(k\) of \(A^m\) are positive}\}.
\]
Then \(e_n(A)=1\) and \(e(A)=\max_{1\le k\le n}e_k(A)\).
We call \((e_1(A),\ldots,e_n(A))\) the \emph{minor-order exponent profile}
of \(A\).  It distinguishes matrices having the same exponent \(e(A)\)
but different values of \(e_k(A)\) at individual minor orders.
The fixed-order corner criterion and complementary-order duality appear in
\cite[Theorems~4.1, 4.2 and~6.2]{XieProfiles}.  Every oscillatory matrix
satisfies \(e_k(A)\le\max\{k,n-k\}\) for \(1\le k<n\) and
\(\lvert e_{k+1}(A)-e_k(A)\rvert\le1\) for \(1\le k<n-1\)
\cite[Theorems~5.4 and~5.6]{XieProfiles}.

Matrix sparsity and inverse sparsity constrain complementary parts of the
profile.  Oscillatory band matrices were studied by Price and Metelmann
\cite{Price,Metelmann}.  Branquinho, Foulqui\'e-Moreno and Ma\~nas
characterized banded total positivity and proved that it is preserved under
multiplication with the bandwidths added \cite{BranquinhoFoulquieManas}.
In particular, if \(B\) has lower and upper bandwidths \(p,q\ge1\) and
every structurally nontrivial minor is positive, their product theorem gives
\[
e_k(B)=\left\lceil\frac{n-k}{\min\{p,q\}}\right\rceil
\qquad(1\le k<n).
\]
Classical work also relates total nonnegativity to inverses of
\(M\)-matrices and to tridiagonal inverse structure
\cite{Markham,PenaM,McDonaldEtAl}.  Green matrices provide a concrete
setting for these relations \cite{DelgadoPenaPena,OlshevskyStrangZhlobich}.

Johnson, Olesky and van den Driessche studied the minimum number of
elementary bidiagonal factors for a given matrix, allowing arbitrary nonzero
parameters over a field.  For a nonsingular triangular matrix of order
\(n\), they proved that at most \((n-1)^2\) such factors and one diagonal
factor suffice \cite[Theorem~15]{JohnsonOleskyDriessche}.
For TN matrices and positive parameters, Fomin and Zelevinsky proved
that each triangular index sequence in a shortest elementary
factorization is a shortest adjacent-transposition representation of a
permutation \cite[Theorem~25]{FominZelevinsky}.

These considerations lead to two questions: which profiles occur in the
basic class, and how do simultaneous bandwidth constraints on a matrix
and its inverse restrict its profile at a fixed order?  The latter
question includes the possible bandwidth pairs, the largest exponents
within each pair, and the number of elementary factors needed for a
realization.

\section{Preliminaries}
\label{sec:2}

\begin{definition}[Index sets and minors]
Write \([n]=\{1,\ldots,n\}\).  If
\(I=\{i_1<\cdots<i_r\}\) and \(J=\{j_1<\cdots<j_r\}\), then
\(A[I,J]\) is the submatrix in rows \(I\) and columns \(J\), and
\(\Delta_{I,J}(A)=\det A[I,J]\) is the corresponding \(r\)-minor.
The complement of \(I\) in \([n]\) is denoted by \(I^c\).
\end{definition}

\begin{definition}[Entrywise positivity and support]
For a real matrix \(B\), \(B>0\) and \(B\ge0\) mean entrywise strict
positivity and nonnegativity.  Its support is
\(\operatorname{supp}B=\{(i,j):b_{ij}\ne0\}\).
\end{definition}

For equally sized index sets \(I,J\), the Cauchy--Binet formula reads
\[
 \Delta_{I,J}(AB)
 =\sum_{\substack{K\subseteq[n]\\|K|=|I|}}
   \Delta_{I,K}(A)\Delta_{K,J}(B).
\]
If \(A,B\) are TN, every term is nonnegative.  Thus a minor of their
product is positive exactly when at least one intermediate index set
\(K\) makes both factors positive.  We repeatedly use this observation
for products of elementary bidiagonal matrices.

We shall use the following classical criterion.

\begin{theorem}[{\cite[Theorem~1]{FallatRemark}}]
\label{thm:1}
Let \(A=(a_{ij})\) be TN.  Then \(A\) is oscillatory if and only if it is
nonsingular and
\[
 a_{i,i+1}>0,\qquad a_{i+1,i}>0,
 \qquad i=1,\ldots,n-1.
\]
\end{theorem}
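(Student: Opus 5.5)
We need to prove the Gantmacher–Krein criterion as stated in Theorem~\ref{thm:1}: a TN matrix \(A\) is oscillatory if and only if it is nonsingular and \(a_{i,i+1}>0\) and \(a_{i+1,i}>0\) for all \(i\). The necessity direction is short; sufficiency is the substance.

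\emph{Necessity.} Suppose \(A^m\) is TP for some \(m\ge1\).
\begin{itemize}
\item Then \(\det A^m=(\det A)^m>0\), so \(A\) is nonsingular.
\item Suppose \(a_{i,i+1}=0\). By total nonnegativity, for \(r\le i<i+1\le s\) the minor \(\det A[\{r,i+1\},\{i,s\}]=a_{ri}a_{i+1,s}-a_{rs}a_{i+1,i}\ge0\) combined with \(a_{i,i+1}=0\) forces a zero block. The standard lemma is that in a TN matrix, \(a_{ij}=0\) with \(i<j\) implies either \(a_{kl}=0\) for all \(k\le i,\ l\ge j\), or \(a_{kl}=0\) for all \(k\ge i,\ l\le j\). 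I would prove it via \(2\times2\) minors, using nonsingularity to rule out zero rows and columns.
\item Applied to \(a_{i,i+1}=0\) with nonsingularity, the first alternative must hold (the second would make \(A[\{i,\dots,n\},\{1,\dots,i+1\}]\) have a zero row, namely row \(i\) in columns \(\le i+1\), which combined with a zero row segment contradicts rank). So \(A\) is block upper triangular with a zero block \(A[\{1..i\},\{i+1..n\}]=0\).
\item Block lower triangularity is preserved under powers, so \((A^m)_{1n}=0\), contradicting TP. The subdiagonal case is symmetric.
\end{itemize}

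\emph{Sufficiency.} I would use the elementary bidiagonal factorization of nonsingular TN matrices: \(A=L\,D\,U\), where \(L\) is a product of elementary lower bidiagonal factors \(L_j(\ell)=I+\ell E_{j+1,j}\) with \(\ell\ge0\), \(D\) is positive diagonal, and \(U\) is a product of elementary upper factors. The positivity \(a_{i+1,i}>0\) means some factor \(L_i(\ell)\) with \(\ell>0\) occurs in \(L\). This holds because the \((i+1,i)\) entry of \(LDU\) can only be created through a path using an edge from level \(i+1\) to level \(i\) in the planar network, and all terms are nonnegative. Likewise \(U\) contains some positive \(U_i\) for each \(i\).

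Since every factor is TN and the factors can be reordered only partially, I would pass to a comparison matrix. By Cauchy--Binet with nonnegative terms, positivity of a minor of \(A^m\) depends only on which factors are present. Dropping factors only decreases minors, and positive diagonal factors are harmless. So it suffices to show that \(B^{n-1}\) is TP, where \(B\) is obtained from \(A\) by keeping one positive \(L_i\) and one positive \(U_i\) for each \(i\): a basic matrix built from the \(2(n-1)\) factors in some order, with some positive diagonal.

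\emph{Main obstacle.} The key step is showing that \(B^{n-1}\) is TP for every order of factors. I would use the planar network (Lindström's lemma): \(\Delta_{I,J}(B^{m})>0\) iff there exist vertex-disjoint source-to-sink paths from \(I\) to \(J\) in the concatenated network. In each copy of \(B\), every path can move down one level (from \(i+1\) to \(i\)) and up one level, and disjoint families can shift all strands simultaneously. For a general \(k\)-minor with \(I=\{i_1<\dots<i_k\}\) and \(J=\{j_1<\dots<j_k\}\), each strand \(t\) must travel \(|i_t-j_t|\le n-k\) levels. Strands blocked by neighbors need to wait. This gives at most \(\max\{k,n-k\}\le n-1\) copies, by an inductive argument on \(k\) that moves the strands one at a time in the natural order (upward strands processed from the top, downward from the bottom) so that noncrossing is maintained.

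Alternatively, and perhaps more cleanly, I would use the corner-minor criterion: a TN nonsingular matrix is TP iff all initial minors are positive. I would then verify that \(B^{n-1}\) has positive corner minors \(\Delta_{\{1..k\},\{n-k+1..n\}}\) and their transposes. This requires only explicit path families with shifts of \(n-k\) levels, realized one level per copy.
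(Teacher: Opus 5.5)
\textbf{Necessity and the reduction are sound.} The paper does not prove this statement; it quotes it from Fallat as the Gantmacher--Krein criterion, so your argument has to stand on its own. Your necessity direction works, up to two slips:
\begin{itemize}
\item The displayed $2\times2$ minor does not involve $a_{i,i+1}$.
\item The zero block $A[\{1,\dots,i\},\{i+1,\dots,n\}]$ makes $A$ block \emph{lower} triangular, not upper.
\end{itemize}
A direct route: $a_{ii},a_{i+1,i+1}>0$ by Lemma~\ref{lem:1}. The minors on rows $\{k,i\}$, columns $\{i,i+1\}$, and on rows $\{k,i+1\}$, columns $\{i+1,l\}$, then force $a_{kl}=0$ for all $k\le i<l$. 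Your reduction of sufficiency to a basic comparison matrix $B$ is also sound; it is the same monotonicity device the paper uses in Theorem~\ref{thm:12}.

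\textbf{The gap.} It is the step you label the main obstacle: neither route proves that $B^{n-1}$ is TP, and the mechanism in your second route is false. Strands cannot in general all rise one level per copy. Two neighbouring strands at levels $x,x+1$ can both rise in one copy only if the upper factor at index $x+1$precedes the one at index $x$.
\begin{itemize}
\item For $n=3$ and upper order $(1,2)$, $\Delta_{\{1,2\},\{2,3\}}(x_1(a)x_2(b))=ab-ab=0$. By Lemma~\ref{lem:6} the corner $\Delta_{L_2,R_2}(B)$ vanishes, although $n-k=1$.
\item More generally, with increasing upper order, \eqref{eq:2} gives corner time $k$, which exceeds $n-k$ whenever $k>n/2$.
\end{itemize}
In your first route, the bound $\max\{k,n-k\}$ is true; it is Lemma~\ref{lem:8}. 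But it comes from the exact predecessor recursion behind \eqref{eq:2}, not from the unspecified induction moving strands one at a time. Also, what you need is the corner criterion of Lemma~\ref{lem:4}. The initial-minor criterion would require all initial minors, not just the corners.

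\textbf{How to close it.} The theorem only needs $n-1$ copies, and a staggered wave gives this for every factor order. For $\Delta_{L_k,R_k}(B^{n-1})$, let strand $\alpha$ ($1\le\alpha\le k$) stay at level $\alpha$ during copies $1,\dots,k-\alpha$. Then let it rise exactly one level in each of the next $n-k$ copies. It uses the unique upper edge at its current level, and horizontal edges at every other factor, including all lower and diagonal factors.
\begin{itemize}
\item When neighbouring strands move in the same copy, they are two levels apart.
\item A moving strand never enters a level held by a waiting or finished neighbour.
\end{itemize}
So in each copy the level sets used are disjoint, and the family is vertex-disjoint whatever the factor order. All edge weights are positive, and the last strand arrives after $(k-1)+(n-k)=n-1$ copies. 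The lower-left corner follows by transposition, since $B^T$ is again basic. Lemma~\ref{lem:4} then makes $B^{n-1}$, and hence $A^{n-1}$, TP. This is Gantmacher and Krein's chain of quasi-principal minors, specialized to corners.
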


\begin{lemma}[{\cite[Theorem~1.13]{Pinkus}}]
\label{lem:1}
Every principal minor of a nonsingular TN matrix is positive.
\end{lemma}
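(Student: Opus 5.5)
The statement is Lemma~\ref{lem:1}: every principal minor of a nonsingular TN matrix \(A\) is positive. Total nonnegativity gives \(\Delta_{I,I}(A)\ge0\), so the work is to exclude a zero principal minor. The plan is to show, by induction on \(n\), that \(\Delta_{[n],[n]}(A)\le a_{11}\,\Delta_{\{2,\ldots,n\},\{2,\ldots,n\}}(A)\). This is the simplest case of Fischer's inequality for TN matrices. Iterating it, and using that every principal submatrix of a TN matrix is again TN, gives
\[
0<\det A\le \Delta_{I,I}(A)\,\Delta_{I^c,I^c}(A)
\]
for \(I\) an initial segment \(\{1,\ldots,r\}\). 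General \(I\) is handled by a separate argument below.

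For the key inequality, first dispose of \(a_{11}=0\). In that case, total nonnegativity forces the first row or the first column to vanish. Indeed, if \(a_{1j}>0\) and \(a_{i1}>0\) with \(i,j>1\), then \(\Delta_{\{1,i\},\{1,j\}}=0\cdot a_{ij}-a_{1j}a_{i1}<0\), which is a contradiction. A vanishing first row or column means \(\det A=0\), contradicting nonsingularity. So \(a_{11}>0\), and one Gaussian elimination step (Schur complement) reduces to a matrix \(S\) of order \(n-1\). By Sylvester's determinant identity, the minors of \(S\) are \(\Delta_{\{1\}\cup I,\{1\}\cup J}(A)/a_{11}\), so \(S\) is TN. Then \(\det A=a_{11}\det S\), and we need \(\det S\le \det A[\{2..n\}]\). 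Here \(A[\{2..n\}]=S+a_{11}^{-1}cr\), where \(c,r\) are the first column and row with their first entry removed, both nonnegative. By the matrix determinant lemma this difference is \(a_{11}^{-1}\,r\,\operatorname{adj}(S)\,c\). The entries of \(\operatorname{adj}(S)\) carry checkerboard signs, so nonnegativity of this sum is not immediate.

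\emph{This is the main obstacle.} To get around it, I would instead use Koteljanskii's inequality, valid for TN matrices, or prove Fischer's inequality directly via the Laplace/Sylvester identity
\[
\Delta_{[n]}\,\Delta_{\{2..n-1\}}=\Delta_{[n-1]}\Delta_{\{2..n\}}-\Delta_{[n-1],\{2..n\}}\Delta_{\{2..n\},[n-1]}
\le \Delta_{[n-1]}\Delta_{\{2..n\}},
\]
the last step using that both subtracted minors are \(\ge0\) by TN. This bounds \(\det A\) above by contiguous principal minors. With \(\det A>0\), it forces \(\Delta_{[n-1]}>0\) and \(\Delta_{\{2..n\}}>0\). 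Inducting on the now nonsingular TN submatrices shows that every contiguous principal minor is positive.

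For a non-contiguous \(I\), I would use a density argument. Nonsingular TN matrices are limits of TP matrices, for example via multiplication by a Gaussian-kernel TP matrix \(G_\varepsilon\to I\), so Fischer's inequality \(\det A\le\Delta_{I,I}\Delta_{I^c,I^c}\), which holds for TP matrices, passes to the limit. Since \(\det A>0\), this forces \(\Delta_{I,I}(A)>0\). The step that actually needs care is establishing Fischer's inequality for arbitrary \(I\), not just contiguous ones. The fallback is to cite it as a classical result (Gantmacher--Krein) if a self-contained proof becomes too long.
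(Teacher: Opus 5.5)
The paper gives no proof of this lemma; it is quoted from Pinkus. Judged as a self-contained argument, your proposal does not close. Your proofs that \(a_{11}>0\) and that the Schur complement \(S=A/a_{11}\) is TN are correct.

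The contiguous step, however, is circular. Write \(\Delta_I=\Delta_{I,I}(A)\). The Desnanot--Jacobi inequality \(\det A\cdot\Delta_{\{2,\ldots,n-1\}}\le\Delta_{[n-1]}\Delta_{\{2,\ldots,n\}}\) gives \(\Delta_{[n-1]}\Delta_{\{2,\ldots,n\}}>0\) from \(\det A>0\) only if \(\Delta_{\{2,\ldots,n-1\}}>0\) is already known. If that minor vanishes, the inequality says nothing. Your induction hypothesis cannot supply it: it applies only to submatrices already known to be nonsingular, and at that point no principal submatrix of order \(n-1\) has been shown nonsingular.

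Two further points. Iterating \(\det A\le a_{11}\Delta_{\{2,\ldots,n\}}\) would give only \(\det A\le a_{11}\cdots a_{rr}\Delta_{\{r+1,\ldots,n\}}\), not Fischer's bound with \(\Delta_{[r]}\); and that inequality was never established anyway. The general case then rests entirely on Fischer's inequality \(\det A\le\Delta_{I,I}\Delta_{I^c,I^c}\) for arbitrary \(I\) (for TP matrices, before your limiting step), which you do not prove and propose to cite. Once you cite it for TN matrices, the density step is superfluous and the lemma is one line. But you have then cited a result at least as deep as the lemma, which is effectively what the paper does.

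The missing idea is already within reach of your second paragraph. Since \(\det S=\det A/a_{11}>0\), the matrix \(S\) is itself a nonsingular TN matrix of order \(n-1\). The induction hypothesis therefore applies to \(S\) directly, and there is no need to compare \(\det S\) with \(\Delta_{\{2,\ldots,n\}}\). Hence
\(\Delta_{\{1\}\cup I,\{1\}\cup I}(A)=a_{11}\Delta_{I,I}(S)>0\) for every \(I\subseteq\{2,\ldots,n\}\).

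Now apply the same argument to the reversed matrix \(A'=(a_{n+1-i,n+1-j})\). It is nonsingular TN, because \(\Delta_{I,J}(A')=\Delta_{\bar I,\bar J}(A)\) with \(\bar I=\{n+1-i:i\in I\}\). This shows that every principal minor of \(A\) containing \(n\) is positive. In particular \(\Delta_{\{2,\ldots,n\}}>0\), so the principal submatrix on \(\{2,\ldots,n\}\) is nonsingular TN, and the induction hypothesis covers all \(I\subseteq\{2,\ldots,n\}\).

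This gives a complete elementary proof without Fischer's inequality, adjugate signs, or approximation by TP matrices.
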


Positivity of an individual minor persists under further powers; for
oscillatory matrices this was noted in \cite[p.~468]{FallatLiu}.

\begin{lemma}[{\cite[Lemma~4.3]{XieProfiles}}]
\label{lem:2}
Let \(A\) be nonsingular and TN.  If \(\Delta_{I,J}(A^m)>0\) for
equally sized index sets \(I,J\), then \(\Delta_{I,J}(A^s)>0\) for every
integer \(s\ge m\).  In particular, positivity of all minors of a fixed
order persists in subsequent powers.
\end{lemma}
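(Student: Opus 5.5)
The statement to prove is Lemma~\ref{lem:2}: if \(A\) is nonsingular and TN and \(\Delta_{I,J}(A^m)>0\), then \(\Delta_{I,J}(A^s)>0\) for all \(s\ge m\). By induction it suffices to pass from \(m\) to \(m+1\). I will write \(A^{m+1}=A\cdot A^m\) and apply Cauchy--Binet:
\[
\Delta_{I,J}(A^{m+1})=\sum_{|K|=|I|}\Delta_{I,K}(A)\,\Delta_{K,J}(A^m).
\]
Powers of a TN matrix are TN, again by Cauchy--Binet, so every term is nonnegative. Hence it is enough to exhibit a single index set \(K\) with both factors positive.

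The natural choice is \(K=I\). The factor \(\Delta_{I,I}(A)\) is a principal minor of the nonsingular TN matrix \(A\), so it is positive by Lemma~\ref{lem:1}. The factor \(\Delta_{I,J}(A^m)\) is positive by hypothesis. Thus the term with \(K=I\) is strictly positive, and so \(\Delta_{I,J}(A^{m+1})>0\). Induction on \(s\) then gives the claim for every \(s\ge m\).

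The ``in particular'' statement follows by applying this to each pair \(I,J\) of order \(k\) separately.

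There is essentially no obstacle. The only points to check are that \(A^m\) is TN, which is needed for the nonnegativity of the remaining terms, and that \(A\) itself is nonsingular so that Lemma~\ref{lem:1} applies. One could equally write \(A^{m+1}=A^m A\) and take \(K=J\); either side works.
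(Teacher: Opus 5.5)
Your proof is correct: in the Cauchy--Binet expansion of \(\Delta_{I,J}(A\cdot A^m)\), the term \(K=I\) is positive by Lemma~\ref{lem:1} and the hypothesis, and every other term is nonnegative because \(A^m\) is TN. The paper does not reprove this cited lemma, but it sets up exactly this argument, namely the Cauchy--Binet positivity observation together with Lemma~\ref{lem:1} stated immediately before, so your approach is essentially the same.
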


For \(1\leq k<n\), put
\[
 L_k=\{1,\ldots,k\},
 \qquad
 R_k=\{n-k+1,\ldots,n\}.
\]
\begin{lemma}[{\cite[Theorem~4.1]{XieProfiles}}]
\label{lem:4}
Let \(B\) be an \(n\)-by-\(n\) nonsingular TN matrix and
\(1\leq k<n\).  Then every \(k\)-minor of \(B\) is positive if and only if
\[
 \det B[L_k,R_k]>0
 \qquad\text{and}\qquad
 \det B[R_k,L_k]>0.
\]
\end{lemma}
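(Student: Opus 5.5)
The forward implication is trivial, since the two corner minors are themselves \(k\)-minors. For the converse I plan to reduce everything to triangular factors. By Cryer's theorem (equivalently, the elementary bidiagonal factorization of nonsingular TN matrices), write \(B=LDU\). Here \(L\) is unit lower triangular TN, \(U\) is unit upper triangular TN, and \(D\) is positive diagonal. Then I apply the Cauchy--Binet formula twice:
\[
\Delta_{I,J}(B)=\sum_{K}\Delta_{I,K}(L)\,d_K\,\Delta_{K,J}(U),\qquad d_K=\prod_{i\in K}d_{ii}>0.
\]
All terms are nonnegative.

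First I will read off what the corner hypotheses mean for the factors. For a unit lower triangular matrix, \(\Delta_{L_k,K}(L)\) vanishes unless \(K\le L_k\) componentwise, which forces \(K=L_k\), and then the minor equals \(1\). Hence
\[
\det B[L_k,R_k]=d_{L_k}\,\Delta_{L_k,R_k}(U),
\]
so \(\Delta_{L_k,R_k}(U)>0\). Symmetrically, \(\det B[R_k,L_k]>0\) gives \(\Delta_{R_k,L_k}(L)>0\).

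Next, for arbitrary \(I,J\) of size \(k\), I keep only the term \(K=L_k\) in the expansion:
\[
\Delta_{I,J}(B)\ \ge\ \Delta_{I,L_k}(L)\,d_{L_k}\,\Delta_{L_k,J}(U).
\]
It therefore suffices to prove the following claim. If \(U\) is unit upper triangular TN with \(\Delta_{L_k,R_k}(U)>0\), then \(\Delta_{L_k,J}(U)>0\) for every \(J\) with \(|J|=k\). Note that \(J\ge L_k\) holds automatically. The statement for \(L\) then follows by transposition.

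To prove the claim I plan to factor \(U\) into elementary upper bidiagonal matrices with nonnegative parameters and use the Lindström--Gessel--Viennot planar network, where each edge only raises the level by one. Positivity of \(\Delta_{L_k,R_k}(U)\) provides a vertex-disjoint family of paths \(P_i\) from source \(i\) to sink \(n-k+i\), for \(i=1,\dots,k\). Write \(J=\{j_1<\dots<j_k\}\); then \(i\le j_i\le n-k+i\). Because the level along \(P_i\) increases in unit steps from \(i\) to \(n-k+i\), the path passes through level \(j_i\). I truncate \(P_i\) there: it follows \(P_i\) until level \(j_i\) is first reached and afterwards runs horizontally at level \(j_i\) to the sink \(j_i\). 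At every time the truncated \(i\)-th path has level \(\min\{\mathrm{lev}P_i,j_i\}\). This is strictly less than \(\min\{\mathrm{lev}P_{i+1},j_{i+1}\}\), because disjoint planar paths satisfy \(\mathrm{lev}P_i<\mathrm{lev}P_{i+1}\) pointwise and \(j_i<j_{i+1}\). So the truncated family is vertex-disjoint with positive weight, and therefore \(\Delta_{L_k,J}(U)>0\).

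The main obstacle is precisely this truncation step: I must justify the pointwise level ordering of disjoint paths in the planar network, and check that the horizontal continuation uses only edges that are present, namely the weight-one horizontal edges. The rest of the argument is bookkeeping with Cauchy--Binet.
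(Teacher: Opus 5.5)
Your proof is correct. The paper gives no proof of its own for this lemma; it quotes it from \cite[Theorem~4.1]{XieProfiles}. So the comparison here is with the tools the paper develops elsewhere.

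Your reduction is sound. The triangular vanishing patterns give \(\det B[L_k,R_k]=d_{L_k}\Delta_{L_k,R_k}(U)\) and \(\det B[R_k,L_k]=d_{L_k}\Delta_{R_k,L_k}(L)\). Keeping only the term \(K=L_k\) in Cauchy--Binet then reduces everything to the one-sided claim, applied to \(U\) and to \(L^T\).

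The step you single out as the main obstacle becomes immediate if you replace the planar network by the discrete subset-chain model of Lemma~\ref{lem:6}. Write \(U\) as a product of positive elementary upper factors (Lemma~\ref{lem:12}). Then \(\Delta_{X,Y}(U)>0\) if and only if there is a chain of \(k\)-subsets from \(X\) to \(Y\) in which, at the \(r\)th factor, either nothing changes or the element \(q_r\) moves to an unoccupied \(q_r+1\). In this model:
\begin{itemize}
\item the ordered elements \(x_1(t)<\cdots<x_k(t)\) keep their labels automatically, because a move is one step into an empty slot;
\item stays are always permitted, because each factor has unit diagonal;
\item for your truncation \(y_\alpha(t)=\min\{x_\alpha(t),j_\alpha\}\), each step is either a stay or the original move \(q\to q+1\) of the \(\alpha\)th element with \(q+1\le j_\alpha\).
\end{itemize}
The target slot of such a move is free, since \(y_{\alpha+1}(t-1)=\min\{x_{\alpha+1}(t-1),j_{\alpha+1}\}\ge q+2\). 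This holds because \(x_{\alpha+1}(t-1)\ne q+1\) in the original chain and \(j_{\alpha+1}>j_\alpha\ge q+1\).

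This version avoids both the Lindstr\"om--Gessel--Viennot sign discussion and the planarity bookkeeping. It also shows that every ingredient you use is already in the paper: the \(LDU\) factorization quoted before Lemma~\ref{lem:7}, Lemma~\ref{lem:12}, and Lemma~\ref{lem:6}. In that language your truncation complements the coordinate comparison in Lemma~\ref{lem:6}. That comparison shows the greedy chain dominates every other chain coordinatewise. Your argument shows that once \(R_k\) is reachable from \(L_k\), every \(k\)-subset \(J\) is reachable as well.
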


For an oscillatory matrix \(A\), define the first-positive corner times
\[
 \tau_k^{\mathrm{UR}}(A)
 =
 \min\{m\geq1:\det((A^m)[L_k,R_k])>0\},
\]
\[
 \tau_k^{\mathrm{LL}}(A)
 =
 \min\{m\geq1:\det((A^m)[R_k,L_k])>0\}.
\]

\begin{corollary}[{\cite[Theorem~4.2]{XieProfiles}}]
\label{thm:2}
For every oscillatory matrix \(A\) and \(1\leq k<n\),
\[
 e_k(A)=
 \max\{\tau_k^{\mathrm{UR}}(A),\tau_k^{\mathrm{LL}}(A)\}.
\]
\end{corollary}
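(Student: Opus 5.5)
The statement is a direct consequence of Lemma~\ref{lem:4} applied to powers of \(A\). The plan is to check the two characterizations of \(e_k(A)\) against each other, one inequality at a time.

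Fix \(1\le k<n\). Set \(m_{\mathrm{UR}}=\tau_k^{\mathrm{UR}}(A)\), \(m_{\mathrm{LL}}=\tau_k^{\mathrm{LL}}(A)\) and \(M=\max\{m_{\mathrm{UR}},m_{\mathrm{LL}}\}\). Both times are finite. Since \(A\) is oscillatory, some power \(A^{r}\) is TP, so both corner minors of \(A^r\) are positive and the minima are over nonempty sets.

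\textbf{Upper bound \(e_k(A)\le M\).} \(A^M\) is a product of nonsingular TN matrices, so it is nonsingular and TN. By Lemma~\ref{lem:2}, positivity of \(\det (A^{m_{\mathrm{UR}}})[L_k,R_k]\) persists to the exponent \(M\ge m_{\mathrm{UR}}\). The same holds for the lower-left corner, which persists from \(m_{\mathrm{LL}}\) to \(M\). Lemma~\ref{lem:4} applied to \(B=A^M\) then shows that every \(k\)-minor of \(A^M\) is positive, so \(e_k(A)\le M\).

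\textbf{Lower bound \(e_k(A)\ge M\).} Put \(m=e_k(A)\). All \(k\)-minors of \(A^m\) are positive, in particular the two corner minors \(\det(A^m)[L_k,R_k]\) and \(\det(A^m)[R_k,L_k]\). By the definitions of the corner times as minima, \(m\ge m_{\mathrm{UR}}\) and \(m\ge m_{\mathrm{LL}}\), so \(m\ge M\).

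The only point needing care is that Lemma~\ref{lem:2} requires positivity of the specific minor \(\Delta_{L_k,R_k}\) to persist. This is exactly its statement for a fixed pair \(I,J\), with nonsingular TN \(A\) being given. With that, combining the two inequalities gives \(e_k(A)=\max\{\tau_k^{\mathrm{UR}}(A),\tau_k^{\mathrm{LL}}(A)\}\). No genuine obstacle is expected.
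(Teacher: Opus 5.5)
Your argument is correct and follows the paper's route: the paper cites this result and places it as a corollary of the corner criterion (Lemma~\ref{lem:4}) combined with persistence of positive minors under powers (Lemma~\ref{lem:2}), which is exactly your two-inequality argument. You also supply the hypotheses those lemmas need: both corner times are finite because some power of \(A\) is TP, and \(A^M\) is nonsingular TN.
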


For a square matrix \(M=(m_{ij})\), let \(\vec G(M)\) be the directed
support graph with an edge \(i\to j\), \(i\neq j\), whenever
\(m_{ij}\neq0\).  If \(M\) is nonsingular TN, then its diagonal is positive.
Let \(\operatorname{dist}(i,j)\) denote the length of a shortest directed
path, with value infinity if none exists and \(\operatorname{dist}(i,i)=0\).
Consequently, for an oscillatory \(A\),
\[
 \tau_1^{\mathrm{UR}}(A)
 =\operatorname{dist}_{\vec G(A)}(1,n),
 \qquad
 \tau_1^{\mathrm{LL}}(A)
 =\operatorname{dist}_{\vec G(A)}(n,1).
\]
Indeed, a positive entry of \(A^m\) is equivalent to a support walk of
length at most \(m\), and positive diagonal entries supply stationary
steps.

Put
\[
 D=\operatorname{diag}(1,-1,1,-1,\ldots),
 \qquad
 A^\vee=DA^{-1}D.
\]

\begin{proposition}[{\cite[Theorem~6.2]{XieProfiles}}]
\label{thm:3}
If \(A\) is oscillatory, then \(A^\vee\) is oscillatory,
\((A^\vee)^\vee=A\), and
\[
 e_k(A^\vee)=e_{n-k}(A),
 \qquad 1\leq k<n.
\]
In particular, \(e(A^\vee)=e(A)\).
\end{proposition}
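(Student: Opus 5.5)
The proposition is quoted from \cite[Theorem~6.2]{XieProfiles}. To reprove it, I would use three ingredients: the Jacobi complementary-minor identity, Theorem~\ref{thm:1}, and the corner criterion of Corollary~\ref{thm:2}.

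\emph{Step 1: \(A^\vee\) is TN and nonsingular.} For nonsingular \(A\) and \(|I|=|J|=r\), Jacobi's identity gives
\[
\Delta_{I,J}(A^{-1})=(-1)^{\sum I+\sum J}\,\frac{\Delta_{J^c,I^c}(A)}{\det A}.
\]
Conjugation by \(D\) multiplies this minor by \((-1)^{\sum I+\sum J}\), up to an \(r\)-dependent sign that appears twice and so cancels. Hence
\[
\Delta_{I,J}(A^\vee)=\frac{\Delta_{J^c,I^c}(A)}{\det A}\ge0 .
\]
The matrix \(A^\vee\) is clearly nonsingular, and \((A^\vee)^\vee=DAD^{-1}\cdots=A\) because \(D^2=I\).

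\emph{Step 2: \(A^\vee\) is oscillatory.} By Theorem~\ref{thm:1} it suffices to check the super- and subdiagonal entries. By Step~1, the \((i,i+1)\) entry of \(A^\vee\) equals \(\Delta_{\{i+1\}^c,\{i\}^c}(A)/\det A\). This \((n-1)\)-minor of \(A\) is positive: Lemma~\ref{lem:4} applied to \(A^{n-1}\) is not directly applicable, so I would instead argue as follows. Since \(A\) is oscillatory, \(A^{e(A)}\) is TP, so \((A^\vee)^{e(A)}=(A^{e(A)})^\vee\) is TP by the Step~1 formula, since every complementary minor of a TP matrix is positive. Then \(A^\vee\) is TN with a TP power, which is exactly the definition of oscillatory. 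This avoids any entry check.

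\emph{Step 3: the profile identity.} The map \(A\mapsto A^\vee\) commutes with powers: \((A^\vee)^m=DA^{-m}D=(A^m)^\vee\). By the Step~1 formula, all \(k\)-minors of \((A^m)^\vee\) are positive exactly when all \((n-k)\)-minors of \(A^m\) are positive, because \((I,J)\mapsto(J^c,I^c)\) is a bijection between the \(k\)-subset pairs and the \((n-k)\)-subset pairs. Taking the least such \(m\) gives \(e_k(A^\vee)=e_{n-k}(A)\). Since \(e_n=1\) and the maximum over \(k\) is unchanged under \(k\leftrightarrow n-k\), this also gives \(e(A^\vee)=e(A)\).

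The main care point is the sign bookkeeping in Jacobi's identity under \(D\)-conjugation. The extra sign \((-1)^{\sum I}\) from \(D\) on the rows and \((-1)^{\sum J}\) from \(D\) on the columns must exactly cancel the Jacobi sign, and this is what I would verify explicitly.
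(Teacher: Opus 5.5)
Your proof is correct. It is the standard argument: Jacobi's complementary-minor identity, with the signs cancelling under $D$-conjugation, together with $(A^\vee)^m=(A^m)^\vee$ and the bijection $(I,J)\mapsto(J^c,I^c)$. The paper does not reprove this proposition; it cites it, and it uses the same Jacobi identity in the proof of Corollary~\ref{cor:6}.

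Theorem~\ref{thm:1}, Lemma~\ref{lem:4} and Corollary~\ref{thm:2} play no role in your final argument. You can therefore drop them from your list of ingredients and delete the false start in Step~2.
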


For a square matrix \(M=(m_{ij})\), define its lower and upper
half-bandwidths by
\[
\operatorname{bw}_{-}M=\max\bigl(\{0\}\cup\{i-j:m_{ij}\ne0\}\bigr),
\quad
\operatorname{bw}_{+}M=\max\bigl(\{0\}\cup\{j-i:m_{ij}\ne0\}\bigr).
\]
Put \(\operatorname{bw}M=\max\{\operatorname{bw}_{-}M,
\operatorname{bw}_{+}M\}\).  We call \(M\) \((p,q)\)-banded when
\(\operatorname{bw}_{-}M\le p\) and \(\operatorname{bw}_{+}M\le q\).
Throughout, bandwidth refers to half-bandwidth.  The notation
\((p,q)\)-banded specifies upper bounds; exact bandwidths are stated
using \(\operatorname{bw}_{-}\) and \(\operatorname{bw}_{+}\).

\section{Basic matrices and exponent sums}
\label{sec:3}

For \(1\le i<n\) and \(a>0\), put \(x_i(a)=I+aE_{i,i+1}\), where
\(E_{ij}\) has a single entry one in position \((i,j)\).
Right multiplication by \(x_i(a)\) adds \(a\) times column \(i\) to
column \(i+1\).  For a finite
sequence \(w=(i_1,\ldots,i_L)\), set
\[
U(w)=x_{i_1}(a_1)\cdots x_{i_L}(a_L),\qquad a_r>0.
\]
Different occurrences may have different positive parameters.  Let
\(\ell_\uparrow(w)\) be the largest length of a sequence
\(i,i+1,\ldots,j-1\) occurring as a subsequence of \(w\).  Define
\(\ell_\downarrow(w)\) using consecutive decreasing indices.  The
selected positions need not be consecutive in \(w\).
For example, \(w=(3,1,2,1)\) has \(\ell_\uparrow(w)=2\) and
\(\ell_\downarrow(w)=3\), witnessed by the subsequences \((1,2)\)
and \((3,2,1)\), respectively.

\begin{lemma}
\label{lem:5}
Suppose that \(w_-,w_+\) each contain every index \(1,\ldots,n-1\), and
let \(\Delta\) be positive diagonal.  Then
\[
A=U(w_-)^T\Delta U(w_+)
\]
is oscillatory, and
\[
\operatorname{bw}_{\pm}A=\ell_\uparrow(w_\pm),
\qquad
\operatorname{bw}_{\pm}A^{-1}=\ell_\downarrow(w_\pm).
\]
When \(w_-=w_+=w\) with the same parameters, \(A\) is symmetric positive
definite.  When all parameters and all diagonal entries of \(\Delta\)
equal one, \(A\) is an integer matrix of determinant one.
\end{lemma}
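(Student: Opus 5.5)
Oscillation, symmetry and the integer claim are quick; the real work is the four bandwidth identities, which I reduce to support statements about \(U(w)\) and \(U(w)^{-1}\).

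\emph{Oscillation.} Each \(x_i(a)\) is TN and unit upper triangular. Hence \(U(w_+)\), \(U(w_-)^T\) and \(\Delta\) are TN, and so is their product \(A\). It is nonsingular with \(\det A=\det\Delta>0\). By Theorem~\ref{thm:1} it suffices to check \(a_{i,i+1}>0\) and \(a_{i+1,i}>0\). For \(a_{i,i+1}\), Cauchy--Binet with \(K=\{i\}\) gives the lower bound
\[
a_{i,i+1}\ge (U(w_-)^T)_{ii}\,\delta_i\,U(w_+)_{i,i+1}=\delta_i\,U(w_+)_{i,i+1}.
\]
The right side is positive because \(w_+\) contains \(i\), and \(U(w_+)_{i,i+1}\) is a sum of nonnegative path weights that includes \(a_r\) for that occurrence. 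The entry \(a_{i+1,i}\) is symmetric, using \(w_-\).

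\emph{Symmetry and integrality.} If \(w_-=w_+\) with the same parameters, then \(A=U^T\Delta U\) is symmetric positive definite. If all parameters and \(\Delta\) equal one, every factor is integral with determinant one, so \(A\) is as well.

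\emph{Bandwidths of \(A\).} Since \(U(w_-)^T\) is lower triangular, \(\Delta\) is diagonal and \(U(w_+)\) is upper triangular, we have \((A)_{ij}=\sum_{k\le\min(i,j)}(U_-)_{ki}\,\delta_k\,(U_+)_{kj}\) with all terms nonnegative. For \(j>i\), the \(k=i\) term is \(\delta_i (U_+)_{ij}\), and every other term vanishes unless \((U_+)_{kj}\neq0\) for some \(k<i\). So the key claim is a support statement:
\[
U(w)_{ij}>0\ (i<j)\quad\Longleftrightarrow\quad (i,i+1,\ldots,j-1)\text{ is a subsequence of }w.
\]
This follows from the path interpretation. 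Right-multiplying successively by \(x_{i_r}\) adds column \(i_r\) to column \(i_r+1\), so nonzero entries in row \(i\) propagate rightward one step at a time in the order of \(w\).

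Note the support is up-closed toward the diagonal. If \((k,\ldots,j-1)\) is a subsequence with \(k<i\), then so is \((i,\ldots,j-1)\). Hence \(\operatorname{bw}_+A=\max\{j-i:(U_+)_{ij}\ne0\}=\ell_\uparrow(w_+)\). Symmetrically, \(\operatorname{bw}_-A=\ell_\uparrow(w_-)\).

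\emph{Bandwidths of \(A^{-1}\).} I write
\[
A^{-1}=U(w_+)^{-1}\Delta^{-1}U(w_-)^{-T},\qquad U(w)^{-1}=x_{i_L}(-a_L)\cdots x_{i_1}(-a_1).
\]
Conjugating by \(D\) makes each factor \(x_i(a)\) again, so \(DU(w)^{-1}D=U(\bar w)\) with \(\bar w\) the reversed word and positive parameters. Then \(DA^{-1}D=U(\bar w_+)\Delta^{-1}U(\bar w_-)^{T}\) is a product of TN matrices, upper triangular times diagonal times lower triangular. The same nonnegative-sum argument applies with the roles of rows and columns swapped. In the upper part, \((DA^{-1}D)_{ij}\) has the term \((U(\bar w_+))_{ij}\delta_j^{-1}\), which is nonzero exactly when \((i,\ldots,j-1)\) is a subsequence of \(\bar w_+\), i.e.\ \((j-1,\ldots,i)\) is a subsequence of \(w_+\). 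The other terms involve \((U(\bar w_+))_{ik}\) with \(k>j\); these are nonzero only if a longer increasing chain occurs in \(\bar w_+\), which again implies the shorter one. Hence \(\operatorname{bw}_+A^{-1}=\ell_\downarrow(w_+)\), and likewise \(\operatorname{bw}_-A^{-1}=\ell_\downarrow(w_-)\), since \(D\)-conjugation does not change support.

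\emph{Main obstacle.} The delicate step is the support characterization of \(U(w)_{ij}\) by induction on the length of \(w\). Appending \(x_m(a)\) gives new column \(m+1\) equal to old column \(m+1\) plus \(a\) times old column \(m\), with no cancellation since all entries are nonnegative. So \(U_{i,m+1}\) becomes positive iff \(U_{im}\) already was positive (or \(i=m\)), which is exactly extending a chain ending at \(m-1\) by the letter \(m\).
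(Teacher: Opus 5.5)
Your proposal is correct and follows essentially the paper's route: the support characterization of \(U(w)\) with its suffix closure for \(A\) and prefix closure for \(DA^{-1}D=U(\bar w_+)\Delta^{-1}U(\bar w_-)^T\), the diagonal term in each triangular expansion, and the criterion of total nonnegativity plus nonsingularity plus positive first off-diagonals for oscillation. The only difference is cosmetic: you prove the support lemma by induction on appended column operations, whereas the paper expands the product using \(E_{ab}E_{cd}=\delta_{bc}E_{ad}\).
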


\begin{proof}
Expanding the product of the elementary matrices gives, for \(i<j\),
\begin{equation}
\label{eq:1}
U(w)_{ij}>0
\quad\Longleftrightarrow\quad
(i,i+1,\ldots,j-1)\text{ occurs as a subsequence of }w.
\end{equation}
Indeed, \(E_{ab}E_{cd}=\delta_{bc}E_{ad}\), and all nonzero terms are
positive products of parameters.

Write \(U_\pm=U(w_\pm)\).  For \(i<j\),
\[
A_{ij}=\sum_{r\le i}(U_-)_{ri}\Delta_{rr}(U_+)_{rj}.
\]
If \((U_+)_{ij}>0\), the term \(r=i\) is positive.  Conversely, a
positive term implies \((U_+)_{rj}>0\); the subsequence in
\eqref{eq:1} then contains the suffix \(i,\ldots,j-1\).
Thus \(A\) and \(U_+\) have the same upper support.  Transposition gives
the lower support.

Set \(V_\pm=DU_\pm^{-1}D\).  Since
\(Dx_i(a)^{-1}D=x_i(a)\), each \(V_\pm\) is the elementary product in
reverse order, and
\[
A^\vee=V_+\Delta^{-1}V_-^T.
\]
For \(i<j\), its \((i,j)\) entry is
\(\sum_{r\ge j}(V_+)_{ir}\Delta_{rr}^{-1}(V_-)_{jr}\).
The term \(r=j\) and the prefix \(i,\ldots,j-1\) in
\eqref{eq:1} show that its upper support is that of \(V_+\).
Transposition gives the lower support.  Reversal exchanges increasing and
decreasing subsequences, proving the bandwidth formulas.

Elementary bidiagonal matrices are TN, so Cauchy--Binet makes \(A\) TN.
Its determinant is \(\det\Delta>0\), and the occurrence of every index
makes both first off-diagonals positive.  The oscillatory criterion applies.
The assertions about symmetry and integer realizations follow from the
displayed factorization.
\end{proof}

To describe minors of these products, represent a \(k\)-subset of
\([n]\) by a string of \(n\) zeros and ones: position \(j\) contains
one exactly when \(j\) belongs to the subset.  Thus \(L_k\) and \(R_k\)
are represented by \(1^k0^{n-k}\) and \(0^{n-k}1^k\), respectively;
an exponent on a symbol denotes the number of repetitions.

An operation at index \(i\) replaces \(10\) in positions \(i,i+1\)
by \(01\), leaving the string unchanged in all other cases.  For a list
\(Q=(q_1,\ldots,q_\ell)\) with \(q_r\in\{1,\ldots,n-1\}\), one
\emph{pass} applies these operations in the listed order, using the
updated string at each step.  Let \(t_k(Q)\) be the least positive number
of passes taking \(1^k0^{n-k}\) to \(0^{n-k}1^k\), with value infinity
if this never happens.  For example, with \(n=5\), \(k=2\) and
\(Q=(1,2,3,4)\), successive passes give
\(11000\longrightarrow10001\longrightarrow00011\), so \(t_2(Q)=2\).

The separation of the two triangular directions is given in
\cite[Proposition~8]{ZaraiMargaliot}; see also
\cite[Lemma~5.1]{XieProfiles} for factors grouped in
lower--diagonal--upper order.
The next lemma records the corner-time identities in subset notation,
together with the coordinate comparison used in the later proofs.

\begin{lemma}
\label{lem:6}
Let \(U_Q=x_{q_1}(a_1)\cdots x_{q_\ell}(a_\ell)\), where all
\(a_r>0\).  The minor \(\Delta_{X,Y}(U_Q)\) is positive if and only
if the string for \(X\) can be taken to the string for \(Y\) by the
listed operations, each of which may either be performed or omitted.
Performing every available exchange puts each ordered element of the
resulting subset at least as far to the right as any such choice of
omissions.  Consequently, for \(1\le k<n\) and every integer \(m\ge1\),
\[
 \Delta_{L_k,R_k}(U_Q^m)>0
 \quad\Longleftrightarrow\quad t_k(Q)\le m.
\]
If \(A\) is an oscillatory product of positive elementary upper and lower
factors and positive diagonal matrices, let \(Q_+\) list its upper
factor indices in product order.  Let \(Q_-\) list the upper factor
indices in the transposed product \(A^T\), with the order of all
factors reversed on transposing.  Then
\[
 \tau_k^{\mathrm{UR}}(A)=t_k(Q_+),
 \qquad
 \tau_k^{\mathrm{LL}}(A)=t_k(Q_-).
\]
\end{lemma}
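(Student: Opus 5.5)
We prove the three assertions in turn: the minor criterion for \(U_Q\), then the greedy comparison, then the corner-time identities.

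\emph{Minor criterion.} We use Cauchy--Binet on the factorization \(U_Q=x_{q_1}(a_1)\cdots x_{q_\ell}(a_\ell)\). Then \(\Delta_{X,Y}(U_Q)\) is a sum over chains \(X=K_0,K_1,\ldots,K_\ell=Y\) of products \(\prod_r\Delta_{K_{r-1},K_r}(x_{q_r}(a_r))\), and every term is nonnegative. For a single elementary factor \(x_i(a)\), a \(k\)-minor \(\Delta_{K,K'}\) is nonzero only in two cases. The first is \(K'=K\), with value one. The second is \(i\in K\), \(i+1\notin K\) and \(K'=K\setminus\{i\}\cup\{i+1\}\), with value \(a\). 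In all other cases the minor is zero. The submatrix is triangular with at most one nonzero off-diagonal entry, so this is checked directly. In string language, each step either leaves the string unchanged or performs the exchange \(10\to01\) at positions \(i,i+1\). Hence the minor is positive exactly when some choice of performed or omitted operations carries \(X\) to \(Y\).

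\emph{Greedy comparison.} Order the subsets coordinatewise: \(S\preceq S'\) when the \(j\)-th smallest element of \(S\) is at most that of \(S'\) for every \(j\). We show that one operation at index \(i\) is monotone for this order. Suppose \(S\preceq S'\); we claim that applying the operation to \(S'\) yields something \(\succeq\) the result of applying it or omitting it on \(S\). The operation changes only the element at \(i\), moving it to \(i+1\), and the ranks of elements do not change. The only delicate case is when \(S\) moves its \(j\)-th element from \(i\) to \(i+1\) but \(S'\) cannot. Then \(S'\) has its \(j\)-th element \(\ge i\), and either it is \(\ge i+1\) already, or it equals \(i\) with \(i+1\in S'\). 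In the latter case \(i+1\) is the \((j+1)\)-th element of \(S'\), which contradicts nothing, and the \(j\)-th element stays at \(i\) while \(S\) reaches \(i+1\). This case must be excluded. We exclude it by observing that the \((j+1)\)-th element of \(S\) is \(\ge i+2\), while \(S\preceq S'\) forces that element to be \(\le i+1\), a contradiction. Induction on the number of listed operations then proves the greedy claim.

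\emph{Powers and \(t_k\).} Since \(U_Q^m\) is the factorization with list \(Q\) repeated \(m\) times, the greedy result shows that \(\Delta_{L_k,R_k}(U_Q^m)>0\) if and only if the greedy image of \(L_k\) after \(m\) passes dominates \(R_k\). Because \(R_k\) is the unique maximal \(k\)-subset, domination means equality, and equality after \(m\) passes is the condition \(t_k(Q)\le m\). Once \(R_k\) is reached it is fixed, so the condition is monotone in \(m\).

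\emph{Corner-time identities.} We expect this part to carry the main work. Write \(A^m\) as a product of upper elementary factors, lower elementary factors and positive diagonals. For the minor \([L_k,R_k]\), apply Cauchy--Binet over all factors. Lower factors and diagonal factors act on strings only by the identity or by moving \(1\)'s leftward, and the diagonals have positive principal minors. The first claim to establish is that including the identity step on every such factor is harmless. The second is that performing leftward moves never helps in reaching the rightmost set \(R_k\). The second claim follows from the same monotonicity argument, applied with the order reversed for lower factors. Hence positivity is equivalent to the repeated list \(Q_+\) carrying \(L_k\) to \(R_k\), which gives \(\tau_k^{\mathrm{UR}}(A)=t_k(Q_+)\). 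Applying the same argument to \(A^T\), whose \([L_k,R_k]\) minor equals \(\Delta_{R_k,L_k}(A)\), yields \(\tau_k^{\mathrm{LL}}(A)=t_k(Q_-)\).
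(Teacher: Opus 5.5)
Your proposal is correct and follows the paper's proof essentially step for step. It uses the same one-factor Cauchy--Binet description, the same coordinatewise comparison (your contradiction via the \((j+1)\)-th element is the paper's observation that the dominated subset is blocked as well), and the same comparison of an arbitrary permitted path with the path that is greedy at upper factors and stationary at lower and diagonal factors. At the lower factors no ``reversed'' monotonicity argument is actually needed: a leftward move only decreases coordinates while the comparison path stays fixed, which is exactly how the paper handles that step.
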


\begin{proof}
For one factor \(x_i(a)\), the positive minors with a fixed row set
\(X\) have column set either \(X\) itself or the set obtained by
replacing \(i\) by \(i+1\), when \(i\in X\) and \(i+1\notin X\).
Their values are one and \(a\), respectively.  Repeated Cauchy--Binet
therefore expresses a minor of \(U_Q\) as a sum of nonnegative terms
indexed by the permitted choices of exchanges.  A term is positive
exactly when every step is permitted.  This proves the first assertion.

We next compare two subsets with ordered elements
\(x_1<\cdots<x_k\) and \(y_1<\cdots<y_k\), where \(x_j\le y_j\)
for every \(j\).  Applying the operation at \(i\) to both subsets
preserves these inequalities.  The only possible failure would require
\(x_j=y_j=i\), with \(x_j\) moving right and \(y_j\) staying put.
The latter can happen only if \(j<k\) and \(y_{j+1}=i+1\).  But then
\(i<x_{j+1}\le y_{j+1}=i+1\), so the first subset is blocked as well.
Thus failure is impossible.

Each operation can only increase an ordered element.  Induction over
the list now shows that performing every available exchange gives
ordered elements at least as large as any sequence with omissions.
The set \(R_k\) has the largest possible ordered elements,
\(n-k+1,\ldots,n\).  Hence it can be reached from \(L_k\) if and
only if performing every available exchange reaches it.  Once reached,
its string \(0^{n-k}1^k\) is unchanged by further operations.  Applying
the same reasoning to \(m\) repetitions of \(Q\) proves the equivalence.

For the full product \(A^m\), Cauchy--Binet also permits a leftward
exchange \(01\mapsto10\) at a lower factor; a diagonal factor leaves
the index set unchanged.  Compare any permitted sequence with the one
that performs every available rightward exchange and leaves the subset
unchanged at all lower and diagonal factors.  The coordinate comparison
just proved is preserved at upper factors.  At lower factors, the first
sequence can only move left while the second stays fixed.  It is
therefore preserved throughout the product.  Since the second sequence
is itself permitted, \(R_k\) is reached from \(L_k\) exactly when the
rightward operations listed in \(Q_+\), repeated \(m\) times, reach it.
This proves the upper-right formula.  Transposing the product turns the
lower-left minor into an upper-right minor and gives the formula for
\(Q_-\).
\end{proof}

Let \(c=(c_1,\ldots,c_{n-1})\) be a permutation of
\(1,\ldots,n-1\), attach a positive parameter \(a_i\) to index \(i\), and
set
\[
U(c)=x_{c_1}(a_{c_1})\cdots x_{c_{n-1}}(a_{c_{n-1}}).
\]
Write \(\pi_i\) for the position of index \(i\) in \(c\), and define its
direction string \(d_1,\ldots,d_{n-2}\) by recording whether factor
\(i\) occurs before or after factor \(i+1\):
\[
d_i=
\begin{cases}
0,&\pi_i<\pi_{i+1},\\
1,&\pi_i>\pi_{i+1}.
\end{cases}
\]
For example, \(c=(2,1,3,4)\) has direction string \((1,0,0)\).
Every binary direction string occurs: prescribe the indicated order on
each pair of adjacent indices and choose any ordering consistent with it.
The underlying graph is a path, so these prescriptions have no directed
cycle.

Let \(r_0(c)\) and \(r_1(c)\) be the longest run lengths of zeros and ones,
respectively, in the direction string, where a run is a consecutive block
of equal symbols; a missing run has length zero.  Put
\[
N_1[a,b]=\#\{i:a\le i\le b,\ d_i=1\},
\qquad
N_0[a,b]=\#\{i:a\le i\le b,\ d_i=0\},
\]
with both counts zero when \(a>b\).

The bidiagonal factorization theorem gives every nonsingular TN matrix
a factorization \(A=L\Delta U\), where \(L,U\) are unit lower and
upper triangular TN matrices and \(\Delta\) is positive diagonal
\cite[Theorem~1]{ZaraiMargaliot}. This factorization is unique.
If \(A=A^T\), uniqueness gives \(L=U^T\).

A nonsingular TN matrix is \emph{basic oscillatory} when its standard
successive elementary bidiagonal factorization has exactly one positive
parameter at each adjacent index on each triangular side
\cite{FallatFiedlerMarkham}.  Its triangular factors admit a normal
form obtained by commuting elementary matrices with nonadjacent indices
\cite[Theorem~2.2]{FallatFiedlerMarkham}; see also the planar form in
\cite[Lemma~11]{FallatLiu}.  The next lemma puts this form in terms of
the products \(U(c)\) and identifies the symmetric members.

\begin{lemma}
\label{lem:7}
The basic oscillatory matrices are exactly
\[
A=U(c_-)^T\Delta U(c_+),
\]
where \(c_-,c_+\) are permutations of \(1,\ldots,n-1\) and \(\Delta\)
is positive diagonal.  The symmetric members are exactly
\(U(c)^T\Delta U(c)\), with the same parameters in the two factors.
\end{lemma}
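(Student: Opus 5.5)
The plan is to derive Lemma~\ref{lem:7} from the unique factorization \(A=L\Delta U\) and from the normal form of \cite[Theorem~2.2]{FallatFiedlerMarkham}. Two directions need proof: every basic oscillatory matrix has the stated shape, and every matrix of that shape is basic oscillatory.

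\emph{Forward direction.} Let \(A\) be basic oscillatory. Its standard successive elementary bidiagonal factorization has the shape (lower factors)\(\cdot\Delta\cdot\)(upper factors). The upper part is a product of factors \(x_i(a)\), and exactly one of them has a positive parameter at each index \(i\). The factors with zero parameter are the identity, so we drop them. What remains is a product of \(n-1\) factors \(x_i(a_i)\), one for each index, in some order. That order is a permutation \(c_+\) of \(1,\ldots,n-1\), so the upper part is \(U(c_+)\).

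The lower part is handled by transposing. Since \(x_i(a)^T=I+aE_{i+1,i}\), the lower part, read as a transpose, is \(U(c_-)^T\) for some permutation \(c_-\). The normal form is not actually needed for this. It only says that different listings related by commuting nonadjacent factors give the same matrix, so each \(c_\pm\) is determined up to that commutation.

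\emph{Converse.} Let \(A=U(c_-)^T\Delta U(c_+)\). Lemma~\ref{lem:5} already shows that \(A\) is oscillatory; each \(c_\pm\) contains every index, so that lemma applies. It remains to show that the standard factorization of \(A\) has exactly one positive parameter per index on each side. The factors \(U(c_-)^T\) and \(U(c_+)\) are unit lower and unit upper triangular TN matrices. By uniqueness of \(L\Delta U\), they are the \(L\) and \(U\) of \(A\).

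We then need that the standard successive factorization of \(U(c_+)\) has exactly one positive parameter per index. I would get this from \cite[Theorem~2.2]{FallatFiedlerMarkham}. That theorem says a unit upper TN matrix that is a product of one elementary factor per index, in any order, is rewritten by commutations of nonadjacent indices into the standard ordering with the same nonzero parameters. Combined with uniqueness of the standard factorization, this gives the claim. I expect this identification to be the main obstacle: it needs uniqueness of the standard parameters, not just existence of some factorization.

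A backup argument counts entries. The standard factorization's parameters are determined by ratios of minors, and a zero parameter would force certain entries to vanish. By \eqref{eq:1}, \(U(c_+)\) has exactly the support determined by \(c_+\), and that support has positive superdiagonal. A dimension or length count then works: by \cite[Theorem~25]{FominZelevinsky}, \(U(c_+)\) lies in the cell of a Coxeter element of length \(n-1\). So every shortest factorization has exactly \(n-1\) factors, each index appears once, and the standard factorization therefore has one positive parameter per index.

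\emph{Symmetric members.} If \(A=U(c)^T\Delta U(c)\) with the same parameters in both factors, then \(A\) is symmetric directly. Conversely, if \(A\) is symmetric, uniqueness of the factorization gives \(L=U^T\). So \(U(c_-)^T=U(c_+)^T\) as matrices, and we can write \(A=U(c_+)^T\Delta U(c_+)\) with identical parameters. Here \(c_-\) may differ from \(c_+\) as a word, but only by commutations, so we may simply take \(c=c_+\).
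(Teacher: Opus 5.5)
Your outline matches the paper's proof in all three parts. The forward direction discards identity factors. The converse uses Lemma~\ref{lem:5} for oscillation and then rewrites \(U(c_\pm)\) into the standard order. The symmetric case uses uniqueness of \(L\Delta U\).

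The gap is that the one substantive step of the converse, which you flag as the main obstacle, is cited rather than proved. You attribute to \cite[Theorem~2.2]{FallatFiedlerMarkham} the claim that any product of one positive elementary factor per index, in any order, commutes into the standard order \((n-1);(n-2,n-1);\ldots;(1,\ldots,n-1)\). As this paper cites that theorem, however, it is a normal form for the factors of a matrix already known to be basic. The statement you need is essentially the one-sided converse of Lemma~\ref{lem:7} itself.

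The paper proves this step directly from the direction string.
\begin{itemize}
\item Split \(1,\ldots,n-1\) into maximal intervals on which the direction bits of \(c\) vanish.
\item Let \(c'\) list each interval increasingly, and list the intervals in decreasing order of their left endpoints.
\item Then \(c'\) and \(c\) order every adjacent pair \(i,i+1\) the same way. Moving the entries of \(c'\) into place one at a time therefore only passes indices differing by at least two, so \(U(c)=U(c')\).
\item Each interval \([a,b]\) of \(c'\) is an initial block of the segment \((a,a+1,\ldots,n-1)\) of the standard order. These segments occur in the same order as the intervals in \(c'\).
\item Setting all remaining parameters to zero gives a standard factorization with one positive parameter per index.
\end{itemize}

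Your backup argument does not close this gap. The Coxeter-cell count shows that every \emph{shortest} positive factorization of \(U(c_+)\) uses each index once. A standard-order representation, however, need not be shortest. For \(n=4\) and \(c=(3,1,2)\), the standard order \(3,2,3,1,2,3\) carries
\[
x_3(\alpha')\,x_2(0)\,x_3(\alpha'')\,x_1(\beta)\,x_2(\gamma)\,x_3(0)=x_3(\alpha'+\alpha'')\,x_1(\beta)\,x_2(\gamma)=U(c),
\]
which has two positive parameters at index \(3\). So ``the standard factorization'' makes sense only under a normalization that makes it unique. Your argument has to produce a specific standard-order representation and identify it with the normalized one, which the length count alone does not do.

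The symmetric part of your proposal is fine as written.
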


\begin{proof}
In the standard bidiagonal factorization \cite[Theorem~1]{ZaraiMargaliot},
the upper factors occur in the order
\[
(n-1);\ (n-2,n-1);\ \ldots;\ (1,2,\ldots,n-1).
\]
The lower factors have the transposed form, and zero parameters give
identity factors.  Retaining one positive factor of each index on each
side therefore gives the displayed representation of a basic matrix.

Conversely, start with any permutation \(c\).  Split \(1,\ldots,n-1\) into
maximal intervals whose adjacent direction bits are zero.  List each
interval increasingly and list the intervals in decreasing order of their
left endpoints.  The resulting permutation \(c'\) has the same relative
order as \(c\) for every pair of numerically adjacent indices.  Move
the first index of \(c'\) to the front of \(c\), then its second index
to the second position, and continue.  Each index passed during this
process differs from the moved index by more than one: otherwise the
two lists would disagree on the order of that adjacent pair.
The corresponding elementary factors commute, so these exchanges do
not change \(U(c)\).  Each increasing interval \([a,b]\) of \(c'\) is
a subsequence of the segment \((a,a+1,\ldots,n-1)\) in the fixed order
above.  Set the unused parameters to zero.
This gives the required basic factorization.  Oscillation follows from
Lemma~\ref{lem:5}.

The unit lower--diagonal--unit upper factorization is unique.  Comparing it
with its transpose for a symmetric matrix gives \(U(c_-)=U(c_+)\), with
the same positive diagonal factor.  This proves the symmetric assertion.
\end{proof}

The corner times computed in \cite[Lemma~13]{FallatLiu} have the
following explicit expression in the direction string.  We prove it
directly to obtain the full profile in the notation used here.

\begin{theorem}
	\label{thm:4}
	Let \(\Delta\) be positive diagonal and set
	\[
	A(c)=U(c)^T\Delta U(c).
	\]
	Then \(A(c)\) is symmetric positive definite and oscillatory, and
	\[
	\operatorname{bw}A(c)=1+r_0(c),
	\qquad
	\operatorname{bw}A(c)^{-1}=1+r_1(c).
	\]
	For \(1\le k\le n-1\),
	\begin{equation}
	\label{eq:2}
	e_k(A(c))=
	\begin{cases}
	k+N_1[k,n-k-1],&2k\le n,\\[3pt]
	n-k+N_0[n-k,k-1],&2k\ge n.
	\end{cases}
	\end{equation}
	At \(2k=n\), both expressions equal \(k\); as usual,
	\(e_n(A(c))=1\).
\end{theorem}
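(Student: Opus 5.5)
The plan has three parts: the structural claims, the bandwidth formulas, and the exponent formula \eqref{eq:2}, which carries almost all of the work.

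\textbf{Structural claims.} The first assertions follow directly from Lemma~\ref{lem:5} with \(w_-=w_+=c\). Since \(c\) contains every index, \(A(c)\) is oscillatory and symmetric positive definite.

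\textbf{Bandwidths.} Lemma~\ref{lem:5} gives \(\operatorname{bw}A(c)=\ell_\uparrow(c)\) and \(\operatorname{bw}A(c)^{-1}=\ell_\downarrow(c)\). For a permutation, a subsequence \(i,i+1,\ldots,j-1\) occurs exactly when \(\pi_i<\pi_{i+1}<\cdots<\pi_{j-1}\), that is, when \(d_i=\cdots=d_{j-2}=0\). The longest such chain has \(1+r_0(c)\) indices, and the decreasing case similarly gives \(1+r_1(c)\).

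\textbf{Reduction of the exponent formula.} By Corollary~\ref{thm:2} and Lemma~\ref{lem:6}, \(e_k=\max\{t_k(Q_+),t_k(Q_-)\}\). Here \(Q_+=c\), and \(Q_-\) is also \(c\): transposing \(U(c)^T\Delta U(c)\) returns the same product, so the upper factors of \(A^T\) again appear in order \(c\). Hence \(e_k(A(c))=t_k(c)\), and the whole task is to compute the number of passes of \(c\) needed to carry \(1^k0^{n-k}\) to \(0^{n-k}1^k\).

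Next I would use Proposition~\ref{thm:3} to reduce to \(2k\le n\). Since \(A(c)^\vee=V\Delta^{-1}V^T\) with \(V\) the reversed product, we have \(A(c)^\vee=A(c^{\mathrm{rev}})\) after transposing the symmetric form, and reversal flips every direction bit. So \(e_{n-k}(A(c))=e_k(A(c^{\mathrm{rev}}))=k+N_0[k,n-k-1]\) for \(2k\le n\). Substituting \(k\mapsto n-k\) gives the second line of \eqref{eq:2}. At \(2k=n\) both lines reduce to \(k\), since the counting ranges are empty.

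\textbf{The pass count for \(2k\le n\).} Track the token positions \(x_1<\cdots<x_k\) under one pass. Within a pass, a token at position \(i\) moves to \(i+1\) when operation \(i\) fires. If \(d_i=0\), then \(i+1\) is processed after \(i\), so the token keeps running forward. If \(d_i=1\), the token stops at \(i+1\) until the next pass. So in one pass a lone token advances from \(i\) through a maximal block of zero bits, ending at the first position \(j>i\) with \(d_{j-1}=1\) (or at \(n\)). Each token therefore needs one pass per direction-1 "barrier" it crosses, plus one.

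Token interaction causes blocking: the leading token must move before the one behind it can occupy its place. I would prove by induction on passes an explicit formula for the positions. The token starting at \(j\) cannot leave position \(j\) before pass \(k-j+1\), because of the leading-token blocking (the \(k\) tokens form a queue). After that it travels to its target \(n-k+j\), losing one extra pass per \(d_i=1\) in its path, but with delays overlapping along the queue. The claim to verify is that the last token (start \(1\), target \(n-k+1\)) governs. It first moves at pass \(k\) or earlier, then needs to cross the barriers in \([k,n-k-1]\). Barriers in \([1,k-1]\) are absorbed by the queue delay, since it moves at most once per pass anyway while tokens ahead clear. Barriers beyond \(n-k-1\) are handled by the leading tokens, whose targets lie further right. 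This yields \(t_k=k+N_1[k,n-k-1]\).

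\textbf{Main obstacle.} The hard step is proving this queue formula rigorously. I would first set up an invariant for \(x_j\) after \(m\) passes, namely the minimum of a free-travel bound and \(x_{j+1}^{(m-1)}\) or \(x_{j+1}^{(m)}-1\), depending on \(d\). For the matching lower bound I would exhibit the first token as a witness, comparing against the monotonicity statement in Lemma~\ref{lem:6}. Alternatively, I would cite the planar corner-time computation of \cite[Lemma~13]{FallatLiu} and translate its crossing counts into \(N_1\).
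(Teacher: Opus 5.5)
Your structural claims, the bandwidth formulas and the reduction to \(e_k(A(c))=t_k(c)\) are correct and agree with the paper. The gap is the pass count itself, which is the whole content of \eqref{eq:2}. For \(2k\le n\) you give a queue heuristic and call the decisive step ``the claim to verify'', and two of the heuristic's concrete assertions are false.

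First, the token starting at \(j\) can leave before pass \(k-j+1\). Take \(n=4\), \(k=2\), \(c=(2,1,3)\), so \(d_1=1\) and \(d_2=0\). The first pass takes \(1100\) to \(1010\) at index \(2\) and then to \(0110\) at index \(1\), so the token from position \(1\) moves in pass \(1\), not pass \(2\). In general its first move is in pass \(k-j+1-N_1[j,k-1]\).

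Second, a trailing token need not move at most once per pass while tokens ahead clear. For \(n=6\), \(k=3\), \(c=(1,3,2,4,5)\), the second pass takes \(101001\) to \(001101\), and the last token advances from position \(1\) to position \(3\).

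So the claims that barriers in \([1,k-1]\) are ``absorbed'' and that barriers beyond \(n-k-1\) are ``handled by the leading tokens'' are exactly what must be proved. Your position invariant is not stated precisely enough to prove them. Citing Fallat and Liu's Lemma~13 would only defer the translation into direction strings that this theorem supplies.

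The idea you are missing is to track exchanges rather than positions.
\begin{itemize}
\item Label the ones from right to left by \(p=1,\ldots,k\) and the zeros from left to right by \(b=1,\ldots,h\), where \(h=n-k\). Each one crosses each zero exactly once.
\item The crossing of the \(p\)-th one with the \(b\)-th zero occurs at index \(r=k-p+b\). Its only prerequisites are the crossings \((p-1,b)\) and \((p,b-1)\), at indices \(r+1\) and \(r-1\); after both have occurred, the pair is adjacent.
\item Hence its pass number satisfies \(s_{1,1}=1\) and \(s_{p,b}=\max\{s_{p-1,b}+1-d_r,\ s_{p,b-1}+d_{r-1}\}\), omitting absent terms.
\item With \(F(r)=N_1[1,r-1]\), the closed form \(s_{p,b}=p+F(k-p+b)-F(k)\) makes both terms of this maximum equal, so it holds by induction on \(p+b\).
\item Every crossing precedes \((k,h)\), so \(t_k=k+F(n-k)-F(k)\). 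This yields both lines of \eqref{eq:2} at once, and your duality reduction becomes unnecessary.
\end{itemize}

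If you keep the reduction, note that transposing the symmetric matrix \(V\Delta^{-1}V^T\) does not turn it into a product \(V^T\Delta' V\). The identity \(e_k(A(c)^\vee)=t_k(c^{\mathrm{rev}})\) should instead come from Lemma~\ref{lem:6}, which allows the factors in any order; here both corners have upper sequence \(c^{\mathrm{rev}}\). The paper argues this way in Proposition~\ref{thm:7}.
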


\begin{proof}
	The displayed Gram factorization makes \(A(c)\) positive definite.  It is
	nonsingular TN, and its first super- and subdiagonal entries are positive,
	so the standard oscillatory criterion applies.

	By Lemma~\ref{lem:5}, the two bandwidths are
	\(\ell_\uparrow(c)\) and \(\ell_\downarrow(c)\).  Each index occurs
	once, so a consecutive increasing subsequence corresponds to a run of
	zeros in the direction string, and a decreasing one to a run of ones.
	Their maximum lengths are \(1+r_0(c)\) and \(1+r_1(c)\).

	It remains to compute the profile.  Repeat \(c\) periodically as a
	sequence of adjacent moves.  Lemma~\ref{lem:6} shows
	that both remote
	order-\(k\) corner thresholds of \(A(c)\) equal the number \(t_k(c)\) of
	passes carrying \(1^k0^{n-k}\) to \(0^{n-k}1^k\).  Thus the exact
	two-corner formula gives
	\begin{equation}
	\label{eq:3}
	e_k(A(c))=t_k(c).
	\end{equation}

	Put \(h=n-k\).  Label the ones in the initial string from right to
	left by \(p=1,\ldots,k\), and the zeros from left to right by
	\(b=1,\ldots,h\).  An exchange preserves the relative order of the
	ones and of the zeros.  To reach the final string, each labelled one
	must therefore cross each labelled zero exactly once.
	The \(p\)-th one starts at position \(k-p+1\).  Before meeting the
	\(b\)-th zero it has crossed precisely \(b-1\) zeros, so their
	exchange occurs at index
	\[
	r_{p,b}=k-p+b.
	\]
	Write \(s_{p,b}\) for the pass containing this exchange.  The
	\(b\)-th zero must first cross the preceding one, giving the exchange
	\((p-1,b)\) when \(p>1\).  The \(p\)-th one must first cross the
	preceding zero, giving \((p,b-1)\) when \(b>1\).  Once these
	exchanges have occurred, no symbol remains between the pair.
	Each index occurs once per pass, so an exchange waits for the next
	pass precisely when a required predecessor uses a later index in
	the factor order.

	For \(r=k-p+b\), the two resulting lower bounds on its pass number are
	\[
	s_{p-1,b}+1-d_r\quad(p>1),
	\qquad
	s_{p,b-1}+d_{r-1}\quad(b>1).
	\]
	Indeed, the predecessor indices are \(r+1\) and \(r-1\), and
	\(1-d_r\) and \(d_{r-1}\) record whether they occur later than \(r\).
	The exchange takes place in the maximum of the available bounds.
	The first pair is initially adjacent, so \(s_{1,1}=1\).

	Define \(F(r)=N_1[1,r-1]\).  We claim that
	\[
	s_{p,b}=p+F(k-p+b)-F(k).
	\]
	The initial value is correct.  For the induction step, write
	\(r=k-p+b\).  If \(p>1\), the first predecessor contributes
	\[
	s_{p-1,b}+1-d_r
	=(p-1)+F(r+1)-F(k)+1-d_r
	=p+F(r)-F(k).
	\]
	If \(b>1\), the second contributes
	\[
	s_{p,b-1}+d_{r-1}
	=p+F(r-1)-F(k)+d_{r-1}
	=p+F(r)-F(k).
	\]
	Thus all existing predecessors give the same value, proving the claim
	by induction on \(p+b\).

	Every exchange precedes \((k,h)\), so the final pass is
	\[
	t_k(c)=s_{k,h}=k+F(h)-F(k).
	\]
	For \(k\le h\), this is \(k+N_1[k,h-1]\).  For \(k\ge h\), it is
	\(k-N_1[h,k-1]=h+N_0[h,k-1]\).
	Together with \eqref{eq:3}, this proves
	\eqref{eq:2}.
\end{proof}

The complementary identity in the next lemma was proved in
\cite[Lemma~15]{FallatLiu}.  The remaining inequalities follow from
the explicit corner formula.

\begin{lemma}
\label{lem:8}
For every factor order \(c\), its one-sided profile \(t_k(c)\) satisfies
\[
 t_k(c)+t_{n-k}(c)=n,
 \qquad
 t_k(c)\leq\max\{k,n-k\}\quad(1\le k<n),
\]
and
\[
 |t_{k+1}(c)-t_k(c)|\leq1\quad(1\le k<n-1).
\]
\end{lemma}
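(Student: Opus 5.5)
The plan is to work entirely with the closed formula obtained in the proof of Theorem~\ref{thm:4}:
\[
t_k(c)=k+F(n-k)-F(k),\qquad F(r)=N_1[1,r-1],
\]
which holds for every \(1\le k\le n-1\), not only in one half of the range.

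First, the complementary identity. Replacing \(k\) by \(n-k\) swaps the roles of \(k\) and \(n-k\) in the formula, so
\[
t_{n-k}(c)=(n-k)+F(k)-F(n-k).
\]
Adding this to the expression for \(t_k(c)\), the \(F\)-terms cancel and the sum is \(n\).

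Next, the upper bound. If \(2k\le n\), then \(t_k(c)=k+N_1[k,n-k-1]\). The interval \([k,n-k-1]\) has \(n-2k\) indices, so
\[
t_k(c)\le k+(n-2k)=n-k=\max\{k,n-k\}.
\]
If \(2k\ge n\), then \(t_k(c)=n-k+N_0[n-k,k-1]\), and the interval has \(2k-n\) indices, so \(t_k(c)\le k\). Both cases give \(t_k(c)\le\max\{k,n-k\}\).

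Finally, the Lipschitz estimate. For \(1\le k<n-1\) I compute the difference from the same unified formula:
\[
t_{k+1}(c)-t_k(c)=1+\bigl(F(n-k-1)-F(n-k)\bigr)-\bigl(F(k+1)-F(k)\bigr)=1-d_{n-k-1}-d_k .
\]
This uses \(F(r+1)-F(r)=d_r\), valid for \(1\le r\le n-2\). Here \(r=k\) and \(r=n-k-1\) both lie in \([1,n-2]\) because \(1\le k\le n-2\). Since \(d_k,d_{n-k-1}\in\{0,1\}\), the difference lies in \(\{-1,0,1\}\).

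The only point needing care is the edge conventions. One must check that the formula \(t_k=k+F(n-k)-F(k)\) was derived for all \(k\), including \(k>n-k\). In the proof of Theorem~\ref{thm:4} the induction on \((p,b)\) made no assumption on the relative size of \(k\) and \(h=n-k\), so it does. One must also check that \(F(1)=0\), which holds because \(N_1[1,0]\) is the empty count. With these two checks in place, every step above is a direct substitution.
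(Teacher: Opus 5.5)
Your proof is correct and takes the paper's route: both arguments read the three properties off the explicit pass formula from the proof of Theorem~\ref{thm:4}. The difference is bookkeeping. You use the unified form \(t_k(c)=k+F(n-k)-F(k)\), which gives the complementary identity by cancellation and the difference \(1-d_k-d_{n-k-1}\) for every \(1\le k\le n-2\) at once. The paper instead uses the two-case interval form \eqref{eq:2} and computes the difference only for \(2k+2\le n\); it obtains the right half from complementation and treats the odd-order center separately via \(1-2d_m\).
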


\begin{proof}
The complementary identity follows because the same central interval
occurs at orders \(k\) and \(n-k\), with zeros and ones exchanged.  If
\(2k\leq n\), then
\[
 k\leq t_k(c)=k+N_1[k,n-k-1]\leq k+(n-2k)=n-k;
\]
the other half follows by complementation.

For \(2k+2\leq n\), the two nested intervals give
\[
 t_{k+1}(c)-t_k(c)=1-d_k-d_{n-k-1}\in\{-1,0,1\}.
\]
The right half follows from the complementary identity.  If \(n=2m+1\),
the central difference is
\[
 t_{m+1}(c)-t_m(c)=1-2d_m\in\{-1,1\};
\]
for even \(n\), the center has value \(m\).  This proves the Lipschitz
bound in every case.
\end{proof}

For a fixed unit upper triangular TN matrix \(U\), denote by
\(\ell(U)\) the least number of factors in a representation
\[
 U=x_{q_1}(a_1)\cdots x_{q_s}(a_s),\qquad a_j>0.
\]
Only the elementary upper factors are counted.  For a permutation
\(z=(z_1,\ldots,z_n)\), write
\(\operatorname{inv}(z)=\#\{(i,j):i<j,\ z_i>z_j\}\).
We use the following upper triangular form of the positive factorization
theorem.

\begin{lemma}[{\cite[Theorems~12, 25 and~26]{FominZelevinsky}}]
\label{lem:12}
Every unit upper triangular TN matrix is a product of positive
elementary upper factors, with the empty product allowed.
Let \(U=x_{q_1}(a_1)\cdots x_{q_s}(a_s)\), with \(a_j>0\).
Start with \((1,\ldots,n)\) and, successively at
positions \(q_j,q_j+1\), exchange the two entries when the left one
is smaller.  This representation has \(s=\ell(U)\) if and only if
every one of the \(s\) exchanges occurs.  In that case the resulting
permutation has \(s\) inversions.
\end{lemma}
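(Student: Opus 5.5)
The plan is to prove the three assertions separately: existence of a positive factorization, the criterion for a factorization to be shortest, and the inversion count. The main tool is the subset-string calculus of Lemma~\ref{lem:6}. The key bridge is that the exchange process in the statement computes the Demazure (0-Hecke) product of the word \(q=(q_1,\ldots,q_s)\), and this product is an invariant of \(U\).

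\textbf{Existence.} I will use Whitney-type elimination from the right. Let \(U\ne I\) be unit upper triangular and TN. Take the last column \(j\) containing a nonzero off-diagonal entry, and within it the lowest such entry \(u_{i,j}\).
\begin{itemize}
\item Nonnegativity of the \(2\times2\) minors in rows \(i,i+1\) and columns \(j-1,j\) forces \(u_{i,j-1}>0\), with the convention that \(u_{j-1,j-1}=1\) when \(i=j-1\).
\item Right multiplication by \(x_{j-1}(-a)\), with \(a=u_{i,j}/u_{i,j-1}\), clears \(u_{i,j}\).
\item The standard argument shows that total nonnegativity is preserved. This step uses the minors that contain the pivot row, exactly as in the proof of the bidiagonal factorization theorem \cite[Theorem~1]{ZaraiMargaliot}.
\end{itemize}
Induction on the number of nonzero off-diagonal entries then writes \(U\) as a product of factors \(x_i(a)\) with \(a>0\).

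\textbf{The exchange process as an invariant of \(U\).} The exchange process applied to \((1,\ldots,n)\) defines a permutation \(z=z(q)\). First, \(z\) is determined by \(U\) alone, not by the chosen word. By Lemma~\ref{lem:6}, \(\Delta_{X,Y}(U)>0\) if and only if \(Y\) is reachable from \(X\). Moreover, the greedy "perform every available exchange" run produces the coordinatewise largest reachable set. For \(X=L_k\), the greedy run on the string \(1^k0^{n-k}\) follows the positions of the \(k\) largest labels in the exchange process on \((1,\ldots,n)\). It therefore ends at the set \(z^{-1}\)-image of \(\{n-k+1,\ldots,n\}\).

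So the sets \(Z_k=\) coordinatewise-maximal \(Y\) with \(\Delta_{L_k,Y}(U)>0\) are read off from \(U\), for \(k=1,\ldots,n-1\). These sets recover \(z\). Hence two words representing the same \(U\) have the same Demazure product.

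\textbf{Counting exchanges and inversions.} Each successful exchange swaps a smaller entry on the left with a larger entry on the right. This raises the inversion number by exactly one, while a skipped step leaves it unchanged. Hence \(\operatorname{inv}(z(q))\) equals the number of exchanges that occur, and this is at most \(s\). This proves the last sentence of the lemma. It also proves the "if" direction: every representation of \(U\) has length at least \(\operatorname{inv}(z)\), because its own exchange count equals \(\operatorname{inv}(z)\). So when all \(s\) exchanges occur, \(s=\operatorname{inv}(z)\le\ell(U)\).

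\textbf{Shortening a word with a skipped exchange.} For the "only if" direction, suppose some exchange at step \(r\) is skipped. Then the prefix \((q_1,\ldots,q_{r-1})\) may be taken reduced, and it represents a permutation \(w\) with \(w s_{q_r}<w\) in Bruhat order. By the exchange property, this prefix is related by braid and commutation moves to a reduced word ending in \(q_r\).

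Each move can be realized on positive factors:
\begin{itemize}
\item Factors with nonadjacent indices commute.
\item Lusztig's relation \(x_i(a)x_{i+1}(b)x_i(c)=x_{i+1}(bc/(a+c))\,x_i(a+c)\,x_{i+1}(ab/(a+c))\) rewrites a braid with positive parameters.
\end{itemize}
After these moves, two factors \(x_{q_r}\) become adjacent, and they merge via \(x_i(a)x_i(b)=x_i(a+b)\). This gives a strictly shorter positive representation of \(U\), so \(s>\ell(U)\).

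I expect this last step to be the main obstacle. It requires the word-combinatorics fact (exchange property) together with checking that every move keeps all parameters positive. If needed, I would simply cite \cite[Theorem~25]{FominZelevinsky} for this step.
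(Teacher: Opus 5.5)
\textbf{Gap: the existence step fails as written.} In a unit upper triangular TN matrix, whenever column \(j\) has a nonzero off-diagonal entry, its lowest one is \(u_{j-1,j}\). This is because, for \(r<j-1\),
\[
\Delta_{\{r,j-1\},\{j-1,j\}}(U)=u_{r,j-1}u_{j-1,j}-u_{r,j}\ge0 .
\]
So your pivot is always the superdiagonal entry, and the multiplier is \(u_{j-1,j}\). After right multiplication by \(x_{j-1}(-u_{j-1,j})\), the entries of column \(j\) above row \(j-1\) become \(-\Delta_{\{r,j-1\},\{j-1,j\}}(U)\le0\). Total nonnegativity survives only if all these minors vanish.

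A concrete counterexample: \(U=x_2(b)x_1(a)=I+aE_{12}+bE_{23}\) with \(a,b>0\) is TN. Column \(3\) is the last nonzero column, and its only off-diagonal entry is \(u_{23}=b\), so the pivot is forced under any reading of ``lowest''. Yet \(Ux_2(-b)=I+aE_{12}-abE_{13}\) is not TN. In fact no positive factorization of this \(U\) ends in \(x_2\), since an earlier \(x_1\) would force \(u_{13}>0\) by \eqref{eq:1}. So the error is the choice of column, not just the multiplier.

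The elimination that works runs along the first row:
\begin{enumerate}
\item take the largest \(j\) with \(u_{1j}>0\);
\item note \(u_{1,j-1}>0\), by the same minor with \(r=1\), or because \(u_{11}=1\);
\item peel off \(x_{j-1}(u_{1j}/u_{1,j-1})\) on the right;
\item continue leftward through row one, then pass to the trailing block.
\end{enumerate}
Simpler still, the existence claim is the upper triangular case of the bidiagonal factorization theorem already used in the proof of Lemma~\ref{lem:7}, with zero-parameter factors deleted.

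\textbf{The rest, and how it compares.} With that repair, the rest of your argument is correct, apart from one slip. A \(10\) at positions \(i,i+1\) means a value \(\le k\) stands left of a value \(>k\), which is exactly when the exchange happens. So the ones of \(1^k0^{n-k}\) are carried by the \(k\) \emph{smallest} values, and \(Z_k\) is the set of final positions of \(1,\dots,k\). The recovery of \(z\) is unaffected.

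Your route differs from the paper, which gives no proof. It cites Fomin and Zelevinsky and only checks that their minimum, taken over factorizations that may also contain lower and diagonal factors, equals \(\ell(U)\).
\begin{itemize}
\item Your ``if'' direction uses the word-independence of the exchange permutation, read off from the sets \(Z_k\) through Lemma~\ref{lem:6}, plus the count of one inversion per exchange. This is precisely the argument the paper gives later, in the proof of Theorem~\ref{thm:12}, for \(a_r=\ell(U^r)\).
\item Your ``only if'' direction is the Fomin--Zelevinsky mechanism. Choose \(r\) as the first skipped step, so the prefix is reduced. Then use a reduced word ending in the descent \(q_r\), Matsumoto--Tits connectivity, Lusztig's relation, and merging. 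Lusztig's relation is invertible with positive parameters, so braid moves work in both directions.
\end{itemize}
This makes the lemma rest only on Lemma~\ref{lem:6} and standard Coxeter combinatorics. Because you never leave the upper unipotent setting, the paper's reconciliation of counting conventions becomes unnecessary.
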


The source permits positive lower and diagonal factors as well.  A
positive lower factor would leave a positive entry below the diagonal
in the product, so none can occur in a representation of \(U\).
Diagonal factors can be moved past upper factors by changing their
positive parameters; the unit diagonal of \(U\) leaves no diagonal
factor.  Thus the minimum in that theorem is precisely \(\ell(U)\).

\begin{lemma}
\label{lem:13}
Let \(Q\) contain every index \(1,\ldots,n-1\). Starting from
\((1,\ldots,n)\), repeat the passes of adjacent exchanges described
in Lemma~\ref{lem:12}. Let \(a_r\) be the inversion number after
\(r\) passes, with \(a_0=0\). Then
\begin{equation}
\label{eq:6}
 \sum_{k=1}^{n-1}t_k(Q)+a_1\le\binom n2+n-1.
\end{equation}
Equality holds if and only if
\[
 a_2-a_1=\#\{k:1\le k<n,\ t_k(Q)>1\}.
\]
Moreover, the increments \(a_{r+1}-a_r\) are nonincreasing for
\(r\ge0\).
\end{lemma}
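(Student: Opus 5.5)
The plan is to translate everything into the permutation process of Lemma~\ref{lem:12} and compare inversion increments pass by pass with the number of unfinished orders \(k\). Let \(z^{(r)}\) be the permutation after \(r\) passes. Fix \(k\) and record, for each position, a one if it holds a value \(\le k\). An exchange at \(i\) swaps \(z_i<z_{i+1}\). On this indicator string it acts exactly as the operation \(10\mapsto01\): it swaps a small value with a large one, and two entries on the same side leave the string unchanged. Hence the indicator string of \(z^{(r)}\) is the string obtained from \(1^k0^{n-k}\) after \(r\) passes of \(Q\). So \(t_k(Q)\) is the first pass after which the values \(1,\ldots,k\) occupy the last \(k\) positions. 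Since \(Q\) contains every index, a non-final string always contains some adjacent \(10\) that a pass removes, so every \(t_k(Q)\) is finite. Once all \(t_k\) are reached, the permutation is the reversal, and \(a_r=\binom n2\) for large \(r\).

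Write \(d_r=a_r-a_{r-1}\) and \(u_r=\#\{k:t_k(Q)\ge r\}\). The key inequality is
\[
 d_r\ge u_r\qquad(r\ge2).
\]
Summing over \(r\ge2\) and using \(\sum_{r\ge2}u_r=\sum_k(t_k(Q)-1)\) gives \(\binom n2-a_1\ge\sum_k t_k(Q)-(n-1)\), which is \eqref{eq:6}. To prove \(d_r\ge u_r\), I will assign to each unfinished \(k\) a distinct exchange performed in pass \(r\). The natural choice is the exchange in pass \(r\) that moves the rightmost one of string \(k\) that can still move; it swaps values \(v\le k<u\). For injectivity, I will show that two different orders \(k<k'\) cannot select the same swap \((v,u)\). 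The intended argument is that if \(v\le k<k'<u\), then the blocks of string \(k\) and string \(k'\) to the right of the swap would differ in a way incompatible with both selections. This uses the coordinate comparison from the proof of Lemma~\ref{lem:6} between nested strings. If this direct injection fails, I will use a per-value count instead: the number of exchanges in a pass involving value \(k+1\) as the larger entry is at least one whenever string \(k\) is unfinished.

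For the equality statement and monotonicity, I will prove the stronger claim that the excess \(d_r-u_r\) is nonincreasing for \(r\ge2\), together with \(d_{r+1}\le d_r\) for \(r\ge0\). For monotonicity of \(d_r\), compare pass \(r+1\) with pass \(r\). Each exchange performed in pass \(r+1\) should be matched with an exchange at the same step of pass \(r\): if the left entry is smaller at step \(j\) of pass \(r+1\), the corresponding pair was already out of final order during pass \(r\). This is again proved by the order-preserving comparison of Lemma~\ref{lem:6}, applied to all thresholds simultaneously, since \(z^{(r)}\) dominates \(z^{(r-1)}\) coordinatewise in every threshold string.

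Granting the nonincreasing excess, equality in \eqref{eq:6} holds iff \(d_r=u_r\) for all \(r\ge2\) (termination is automatic), and this reduces to \(d_2=u_2\), which is the stated condition \(a_2-a_1=\#\{k:t_k(Q)>1\}\). I expect the injectivity in the key inequality, together with the nonincreasing-excess refinement needed to reduce equality to the single pass \(r=2\), to be the main obstacle. The translation to indicator strings and the telescoping sum are routine.
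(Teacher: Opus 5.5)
Your translation to threshold strings and the telescoping over $r\ge2$ are correct, but the two steps that carry the proof are not established, and the tools you name for them fail. Take $n=4$ and $Q=(1,3,2)$. The first pass gives $(2,4,1,3)$. The second pass exchanges the value pairs $\{2,4\}$, $\{1,3\}$, $\{2,3\}$ and ends at $(4,3,2,1)$, and $t_1=t_2=t_3=2$. At the start of pass $2$, the rightmost one of string $2$ is the value $1$ in position $3$, and it moves only through the exchange $\{1,3\}$. That is also the only exchange moving the single one of string $1$. So your selection rule assigns $k=1$ and $k=2$ the same exchange, in exactly the configuration $v\le k<k'<u$ you expected to rule out.

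Your fallback is false in the same pass: string $1$ is unfinished, yet value $2$ is never the larger entry of an exchange (the larger entries are $4,3,3$). The monotonicity step has the same defect. Coordinatewise domination in every threshold string does not keep an ineffective step ineffective: $y'=(2,3,1)$ dominates $y=(2,1,3)$ at every threshold, yet the operation at index $1$ fixes $y$ and acts on $y'$. Finally, the nonincreasing excess $d_r-u_r$, which is what reduces equality to the single pass $r=2$, is only announced. It requires knowing which thresholds lose exchanges between passes, and a step-by-step matching of position exchanges carries no such information.

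The missing idea is the paper's second description of a pass by value exchanges. Let $\mathcal L_j$ interchange the values $j,j+1$ when $j$ stands to the left of $j+1$. These operations commute with the position operations $\mathcal R_i$, and $\mathcal R_i\varepsilon=\mathcal L_i\varepsilon$ on the identity $\varepsilon$. Let $F$ be the pass in the order $Q$ and let $G$ apply the $\mathcal L_j$ in the reverse order. Then $F\varepsilon=G\varepsilon$, $FG=GF$, and $z^{(r+1)}=Gz^{(r)}$.

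Only $\mathcal L_k$ alters the threshold-$k$ string. Hence the number $m_{r,k}$ of effective $\mathcal L_k$ in pass $r$ is positive exactly when $r\le t_k(Q)$. Commutation of $F$ with the value operations shows that an ineffective occurrence in $G$ stays ineffective in later passes, so $m_{r+1,k}\le m_{r,k}$. This is your per-value count, but applied to value exchanges instead of position exchanges. It yields everything at once: $a_{r+1}-a_r=\sum_k m_{r+1,k}\ge\#\{k:t_k(Q)>r\}$, nonincreasing increments, and the excess $\sum_k\max\{m_{r+1,k}-1,0\}$, which is nonincreasing in $r$.
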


\begin{proof}
On permutations, let \(\mathcal R_i\) exchange the entries in
positions \(i,i+1\) if the left entry is smaller. Let
\(\mathcal L_j\) exchange the values \(j,j+1\) if \(j\) occurs
to the left of \(j+1\). In all other cases the respective operation
does nothing. Each actual exchange of either kind increases the
inversion number by one. For \(\mathcal L_j\), every third value
is either smaller than both \(j,j+1\) or larger than both, so its
total contribution to the inversion number is unchanged.

These two kinds of operations commute:
\(\mathcal R_i\mathcal L_j=\mathcal L_j\mathcal R_i\).
If positions \(i,i+1\) contain exactly \(j,j+1\), either order makes
the same single exchange or leaves both entries fixed. Otherwise,
an exchange of adjacent positions does not reverse the order of the
values \(j,j+1\), and interchanging consecutive values does not change
their comparisons with a third value. Thus neither operation changes
whether the other occurs. The position exchange and the relabeling
then commute.

Let \(F\) apply the operations \(\mathcal R_i\) in the order \(Q\),
and let \(G\) apply the operations \(\mathcal L_j\) in the reverse
order. Put \(\varepsilon=(1,\ldots,n)\). The identities
\(\mathcal R_i\varepsilon=\mathcal L_i\varepsilon\), together with
commutation, give \(F\varepsilon=G\varepsilon\): replace the first
operation on \(\varepsilon\) and commute it past the remaining
operations, then repeat. Also \(FG=GF\). Consequently, if
\(z^{(r)}=F^r\varepsilon\), then
\[
 Gz^{(r)}=GF^r\varepsilon=F^rG\varepsilon
        =F^{r+1}\varepsilon=z^{(r+1)}.
\]

Fix \(k\), and mark a position by one when its value is at most
\(k\), and by zero otherwise. Each \(\mathcal R_i\) induces
exactly \(10\mapsto01\) on this string. Thus the values
\(1,\ldots,k\) first occupy the last \(k\) positions after \(t_k(Q)\)
passes. An unfinished string must change in the next complete pass:
otherwise no rightward exchange could occur, and an adjacent \(10\)
would persist until its index occurs in \(Q\), when it must move.

Let \(m_{r,k}\) count the actual \(\mathcal L_k\) exchanges in the
\(r\)th pass of \(G\). All other value exchanges leave the marked
string unchanged. Hence \(m_{r,k}>0\) when \(r\le t_k(Q)\).
After the two groups of values have separated, exchanges within either
group preserve that separation, and \(\mathcal L_k\) cannot act.
Therefore
\[
 m_{r,k}>0\quad\Longleftrightarrow\quad r\le t_k(Q).
\]
Counting all exchanges gives
\begin{equation}
\label{eq:7}
 a_{r+1}-a_r=\sum_{k=1}^{n-1}m_{r+1,k}
 \ge\#\{k:1\le k<n,\ t_k(Q)>r\}.
\end{equation}

An occurrence in the fixed list \(G\), once ineffective, remains
ineffective in later passes. To see this, let \(H\) be the prefix
before that occurrence, whose label is \(k\), and put
\(y=Hz^{(r-1)}\). The corresponding state in the next pass is
\(Hz^{(r)}=HFz^{(r-1)}=Fy\), since each value operation commutes
with \(F\). If \(\mathcal L_k y=y\), then
\(\mathcal L_k Fy=F\mathcal L_k y=Fy\). Consequently
\(m_{r+1,k}\le m_{r,k}\), which proves the assertion about increments.
It also shows that
\[
 d_r=(a_{r+1}-a_r)-\#\{k:1\le k<n,\ t_k(Q)>r\}
     =\sum_{k=1}^{n-1}\max\{m_{r+1,k}-1,0\}
\]
satisfies \(d_0\ge d_1\ge d_2\ge\cdots\ge0\).

The permutations eventually reach \((n,\ldots,1)\): a permutation
left fixed by a complete pass must be decreasing, and every other pass
increases the inversion number. Thus \(a_r=\binom n2\) for large
\(r\). Summing \eqref{eq:7}, with its excess \(d_r\), over \(r\ge1\)
gives
\[
 \sum_{k=1}^{n-1}t_k(Q)+a_1+\sum_{r\ge1}d_r=\binom n2+n-1.
\]
This proves \eqref{eq:6}. Since the \(d_r\) are nonnegative and
nonincreasing, equality holds exactly when \(d_1=0\), which is the
stated condition.
\end{proof}

For symmetric basic matrices, the sum identity follows from the
one-sided identities in \cite[Lemma~15]{FallatLiu}.  The next theorem
bounds the sum for every symmetric oscillatory matrix in terms of its
minimum factor count.

\begin{theorem}
\label{thm:12}
Let \(A=U^T\Delta U\) be a symmetric oscillatory matrix of order
\(n\), where \(U\) is unit upper triangular and \(\Delta\) is
positive diagonal.  There is a symmetric basic oscillatory matrix
\(B\) of the same order such that \(e_k(A)\le e_k(B)\) for every
\(1\le k<n\).  Moreover,
\[
 e_k(A)+e_{n-k}(A)\le n\qquad(1\le k<n),
\]
and
\begin{equation}
\label{eq:8}
 \sum_{k=1}^{n-1}e_k(A)+\ell(U)\le\binom n2+n-1.
\end{equation}
Equality in \eqref{eq:8} holds if and only if
\begin{equation}
\label{eq:10}
 \ell(U^2)-\ell(U)
 =\#\{k:1\le k<n,\ \Delta_{L_k,R_k}(A)=0\}.
\end{equation}
In particular, \(\sum_{k=1}^{n-1}e_k(A)\le\binom n2\).
Equality in this last bound holds if and only if \(A\) is basic.
It is also equivalent to equality in every complementary-order bound.
\end{theorem}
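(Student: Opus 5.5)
The plan is to reduce everything to the one-sided pass counts \(t_k(Q)\) of a single shortest factor list, and then apply Lemmas~\ref{lem:6}, \ref{lem:8} and~\ref{lem:13}.

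\emph{Setup.} By Lemma~\ref{lem:12}, write \(U=x_{q_1}(a_1)\cdots x_{q_s}(a_s)\) with \(s=\ell(U)\) and all \(a_j>0\), and put \(Q=(q_1,\ldots,q_s)\). Then \(A=U^T\Delta U\) is a positive product of the kind treated in Lemma~\ref{lem:6}. Its upper factor list is \(Q_+=Q\). Transposing \(A=A^T\) reverses and transposes the factors, so the upper factors of the transposed product are again those of \(U\), in the same order; hence \(Q_-=Q\). By Lemma~\ref{lem:6} and Corollary~\ref{thm:2}, \(e_k(A)=t_k(Q)\) for every \(1\le k<n\). By Theorem~\ref{thm:1}, \(A\) has positive first super-diagonal. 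The upper support of \(A\) equals that of \(U\) (the argument of Lemma~\ref{lem:5} applies), and by \eqref{eq:1} this forces every index \(1,\ldots,n-1\) to occur in \(Q\).

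\emph{Comparison with a basic matrix and the complementary bound.} Choose one occurrence of each index in \(Q\), and let \(c\) be the permutation they form in the order of \(Q\). Set \(B=U(c)^TU(c)\), which is symmetric basic by Lemma~\ref{lem:7}. I will apply the coordinate comparison in the proof of Lemma~\ref{lem:6} to \(Q^m\) and \(c^m\), viewing \(c^m\) as \(Q^m\) with some operations omitted. The greedy run through \(Q^m\) places each ordered element at least as far right as the greedy run through \(c^m\). Hence \(t_k(Q)\le t_k(c)\), that is, \(e_k(A)\le e_k(B)\) by Theorem~\ref{thm:4}. The bound \(e_k(A)+e_{n-k}(A)\le n\) then follows from \(t_k(c)+t_{n-k}(c)=n\) in Lemma~\ref{lem:8}.

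\emph{The sum inequality and its equality case.} Apply Lemma~\ref{lem:13} to \(Q\). By the equality criterion of Lemma~\ref{lem:12}, the first pass performs all \(s\) exchanges, so \(a_1=\ell(U)\), and \eqref{eq:6} becomes \eqref{eq:8}. For the equality case I need two identifications.
\begin{enumerate}
\item \(t_k(Q)>1\) if and only if \(\Delta_{L_k,R_k}(A)=0\). This holds because \(\tau^{\mathrm{UR}}_k(A)=t_k(Q)\), and \(\tau^{\mathrm{UR}}_k(A)=1\) exactly when that corner minor is positive.
\item \(a_2=\ell(U^2)\), where \(U^2\) is represented by the doubled list \(QQ\).
\end{enumerate}
Identification (ii) is the step I expect to be the main obstacle. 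The inversion number of the permutation produced by the "exchange when smaller" process (the Demazure product) must equal the minimal length of \emph{any} positive factorization of the product matrix, not only of factorizations whose exchanges all occur. My first attempt will be to invoke the Fomin--Zelevinsky results behind Lemma~\ref{lem:12}. There the pattern of nonzero minors of a positive product \(x_{q_1}(\cdot)\cdots x_{q_r}(\cdot)\) depends only on the Demazure product of \((q_1,\ldots,q_r)\). Moreover, a shortest positive factorization of that matrix uses a reduced word of this permutation, so its length is the inversion number. With (i) and (ii), the criterion of Lemma~\ref{lem:13} becomes exactly \eqref{eq:10}.

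\emph{Consequences.} Since \(Q\) contains every index, \(\ell(U)\ge n-1\), and \eqref{eq:8} gives \(\sum_k e_k(A)\le\binom n2\). Equality forces \(\ell(U)=n-1\), so each index occurs exactly once and \(U=U(c)\). Then \(A\) is basic, by the characterization of symmetric basic matrices in Lemma~\ref{lem:7}. Conversely, if \(A\) is basic, Lemma~\ref{lem:8} gives \(t_k+t_{n-k}=n\) for each \(k\), and summing yields \(\binom n2\). Finally, since \(\sum_{k=1}^{n-1}e_k(A)=\frac12\sum_{k=1}^{n-1}\bigl(e_k(A)+e_{n-k}(A)\bigr)\) and each pair sum is at most \(n\), the total equals \(\binom n2\) exactly when every complementary-order bound is an equality.
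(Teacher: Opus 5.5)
Your outline matches the paper's proof in its main steps. You take a shortest word \(Q\) for \(U\), get \(e_k(A)=t_k(Q)\) from Lemma~\ref{lem:6} and symmetry, apply Lemma~\ref{lem:13} with \(a_1=\ell(U)\), use the identification of \(t_k(Q)>1\) with \(\Delta_{L_k,R_k}(A)=0\), and draw the same consequences at the end. The one real difference is the domination \(e_k(A)\le e_k(B)\). You use the coordinate comparison of Lemma~\ref{lem:6} on \(Q^m\) against \(c^m\). The paper instead notes that minors of \(A^r\) are polynomials with nonnegative coefficients in the parameters, so zeroing parameters can only lower them. Both are valid; the paper's version controls every minor, while yours controls exactly the corner minors that matter.

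There is a genuine gap at step (ii), \(a_2=\ell(U^2)\), and it carries the entire equality criterion \eqref{eq:10}.
\begin{itemize}
\item The fact you quote points the wrong way. You cite that the nonzero-minor pattern of a positive product depends only on its Demazure product. What you need is the converse: the one-pass permutation of \emph{any} positive factorization of \(U^2\) is determined by the matrix \(U^2\). Only then do the word \(QQ\) and a shortest factorization of \(U^2\) give the same permutation.
\item Your ``moreover'' sentence simply asserts this invariance, and nothing you cite establishes it. The missing ingredient is injectivity.
\end{itemize}

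Lemma~\ref{lem:6} supplies this at once, and it is how the paper argues.
\begin{itemize}
\item Take any positive word \(Q'\) and run one pass on \((1,\ldots,n)\). As in the proof of Lemma~\ref{lem:13}, the positions of the values \(1,\ldots,k\) are obtained by performing every available exchange on \(L_k\).
\item By Lemma~\ref{lem:6}, this set is the componentwise greatest \(J\) with \(\Delta_{L_k,J}(U_{Q'})>0\).
\item These sets, for \(k=1,\ldots,n-1\), are read off from the matrix alone, and their successive differences fix the permutation.
\item Apply this to \(QQ\) and to a shortest factorization of \(U^2\), in which every exchange occurs by Lemma~\ref{lem:12}. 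This gives \(\ell(U^2)=a_2\).
\end{itemize}

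An appeal to disjointness of the cells in the Fomin--Zelevinsky decomposition would also close the gap. So would reducing \(QQ\) to a reduced word by positive-parameter moves and then invoking Lemma~\ref{lem:12}. Either way, that is the fact you would have to state and justify, not the one you cited.
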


\begin{proof}
The bidiagonal factorization and symmetry give the stated form of
\(A\), with \(U\) a product of positive elementary upper factors.
Every adjacent index occurs in this product, since otherwise a first
off-diagonal entry of \(A\) would vanish.  Select one occurrence of each
index, preserving its position, and set all other parameters to zero.
The resulting matrix \(B=V^T\Delta V\) is symmetric basic by
Lemma~\ref{lem:7}.

Repeated Cauchy--Binet expresses every minor of \(A^r\) as a polynomial
with nonnegative coefficients in the factor parameters.  Setting some
parameters to zero therefore gives
\[
 0\le\Delta_{I,J}(B^r)\le\Delta_{I,J}(A^r)
 \qquad(r\ge1).
\]
Thus \(e_k(A)\le e_k(B)\).  The complementary identity for \(B\)
in Lemma~\ref{lem:8} gives \(e_k(A)+e_{n-k}(A)\le n\).

Choose a shortest positive elementary representation of \(U\), with
index sequence \(Q\). Its first-pass inversion number is
\(a_1=\ell(U)\), by Lemma~\ref{lem:12}. Symmetry,
Lemma~\ref{lem:6} and Corollary~\ref{thm:2} give
\(e_k(A)=t_k(Q)\). Applying Lemma~\ref{lem:13} proves \eqref{eq:8}.

In fact, \(a_r=\ell(U^r)\) for every \(r\ge1\). For each \(k\),
the positions occupied by the values \(1,\ldots,k\) after \(r\)
permutation passes form the componentwise greatest subset \(J\)
with \(\Delta_{L_k,J}(U^r)>0\), by Lemma~\ref{lem:6}.
These subsets are determined by \(U^r\); their successive set
differences determine the position of every value. Thus every positive
factorization of \(U^r\) has the same final permutation after one
pass. In a shortest factorization all exchanges occur, by
Lemma~\ref{lem:12}, so its length is \(a_r\).
Also \(t_k(Q)>1\) exactly when \(\Delta_{L_k,R_k}(A)=0\).
The equality condition in Lemma~\ref{lem:13} is therefore
\eqref{eq:10}.

Since every index occurs, \(\ell(U)\ge n-1\).  Equality holds
exactly when \(A\) is basic, by Lemma~\ref{lem:7}.  Therefore
\eqref{eq:8} gives a strict sum bound when \(A\) is not basic.
For basic \(A\), Lemma~\ref{lem:8} gives the sum \(\binom n2\).
Finally, since every complementary sum is at most \(n\), equality
in the total sum is equivalent to equality in all these bounds.
\end{proof}

The same proof and Lemma~\ref{lem:13} show that the sequence
\(\ell(U^r)\) has nonincreasing increments:
\[
 \ell(U^{r+1})-\ell(U^r)
 \le \ell(U^r)-\ell(U^{r-1})\qquad(r\ge1),
\]
where \(\ell(I)=0\). Lemma~\ref{lem:12} gives
\(n-1\le\ell(U)\le\binom n2\). The sum bound is attained at every
count in this range.

\begin{corollary}
\label{cor:7}
For every integer \(L\) with \(n-1\le L\le\binom n2\), the largest
value of \(\sum_{k=1}^{n-1}e_k(A)\) among symmetric oscillatory
matrices \(A=U^T\Delta U\) with \(\ell(U)=L\) is
\[
 \binom n2+n-1-L.
\]
The maximum has a symmetric integer realization of determinant one.
\end{corollary}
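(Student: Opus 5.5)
The upper bound is already \eqref{eq:8} of Theorem~\ref{thm:12}: for every symmetric oscillatory \(A=U^T\Delta U\) with \(\ell(U)=L\) we have \(\sum_k e_k(A)\le\binom n2+n-1-L\). So the task is a construction. For each \(L\) with \(n-1\le L\le\binom n2\), I need an index list \(Q\) of length \(L\) with three properties: it contains every index \(1,\ldots,n-1\); it is reduced, meaning every exchange in the first pass of Lemma~\ref{lem:12} actually occurs; and it satisfies the equality condition of Lemma~\ref{lem:13}. Given such a \(Q\), set all parameters to one and \(\Delta=I\), and put \(A=U_Q^T U_Q\). Lemma~\ref{lem:5} makes \(A\) a symmetric oscillatory integer matrix of determinant one. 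Lemma~\ref{lem:12} gives \(\ell(U_Q)=L\). Lemma~\ref{lem:6} and Corollary~\ref{thm:2} give \(e_k(A)=t_k(Q)\). Lemma~\ref{lem:13} then gives the sum \(\binom n2+n-1-L\).

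For \(Q\) I will take the length-\(L\) prefix of the concatenated word
\[
(1,2,\ldots,n-1)(1,2,\ldots,n-2)(1,\ldots,n-3)\cdots(1,2)(1).
\]
This is the bubble-sort reduced word for the reversal \((n,\ldots,1)\). Its block structure makes the three properties checkable.
\begin{itemize}
\item Since \(L\ge n-1\), the prefix contains the whole first block, so every index occurs.
\item Every prefix of a reduced word is reduced, so \(a_1=L\).
\item Write \(L=(n-1)+(n-2)+\cdots+(n-j)+s\) with \(0\le s<n-j-1\). One pass moves the values \(1,\ldots,j\) to the last \(j\) positions in reverse order, and moves value \(j+1\) forward by \(s\) further steps. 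This permutation describes \(F\varepsilon\) explicitly.
\end{itemize}

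Next I will compute one more pass \(F^2\varepsilon\), using the value-operator description \(z^{(2)}=Gz^{(1)}\) from the proof of Lemma~\ref{lem:13}. I will then check that \(a_2-a_1\) equals \(\#\{k:t_k(Q)>1\}\). Equivalently, \(d_1=0\): in the second pass each \(\mathcal L_k\) acts at most once. From the first-pass permutation, \(t_k(Q)=1\) exactly when the values \(1,\ldots,k\) already occupy the last \(k\) positions, that is, when \(k\le j\). So the required count is \(n-1-j\). In the second pass I expect the block structure of \(Q\) to move the next unfinished small value into place once, so that each \(\mathcal L_k\) with \(k>j\) fires exactly once and those with \(k\le j\) never fire.

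I expect this verification of \(d_1=0\) to be the main obstacle, in particular the partial-block case \(s>0\). If the bubble-sort prefix fails there, I would replace it with a different reduced word. One option is the prefix of a word for the reversal built from a basic Coxeter factor \(c\) followed by further blocks. The target in either case is a word whose second pass adds exactly one inversion per unfinished order \(k\). A fallback is to verify the sum directly: compute all \(t_k(Q)\) from the marked strings, and check that \(\sum_k t_k(Q)=\binom n2+n-1-L\) by induction on \(L\), adding one letter at a time. The aim would be to show that each added letter decreases exactly one \(t_k\) by one. At the endpoints, \(L=n-1\) gives a basic matrix with sum \(\binom n2\) by Lemma~\ref{lem:8}, and \(L=\binom n2\) makes \(U\) totally positive with all \(t_k=1\), so both are consistent with the formula.
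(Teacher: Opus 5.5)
\textbf{There is a genuine gap: the bubble-sort prefix does not attain the bound.} Your upper bound from \eqref{eq:8} is fine. So is your reduction, and it matches the paper: you need a reduced word $Q$ of length $L$ that contains every index and satisfies $\sum_k t_k(Q)=\binom n2+n-1-L$; then $A=U(Q)^TU(Q)$ with unit parameters does the rest.

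A counterexample already occurs at $n=5$, $L=7$, $Q=(1,2,3,4,1,2,3)$. The first pass gives $(3,4,5,2,1)$, so $a_1=7$. The second pass gives $(5,4,3,2,1)$, so $a_2-a_1=3$, while only the orders $k=3,4$ are unfinished. In your operator language, applying $\mathcal L_3,\mathcal L_2,\mathcal L_1,\mathcal L_4,\mathcal L_3,\mathcal L_2,\mathcal L_1$ in turn to $(3,4,5,2,1)$ fires $\mathcal L_3$ twice, so $d_1=1$. Computed directly, the profile is $(1,1,2,2)$, with sum $6<7$.

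The failure is structural. With $s=0$, each block $(1,\ldots,n-i)$ acts on the window of the first $n-i+1$ positions. There it removes the leftmost one and appends it at the window's end, so it carries a leading one of $1^{a}0^{n-k}1^{b}$ across the whole zero block. Every pass therefore removes $j$ leading ones rather than one, and $t_k(Q)=\lceil k/j\rceil$. For $j=2$ and $n\ge5$ this sum falls strictly below $\binom n2-n+2$. Your induction fallback fails for the same word: going from $L=6$ to $L=7$, the profile drops from $(1,2,2,3)$ to $(1,1,2,2)$, so two coordinates decrease.

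\textbf{The fix.} Keep your prefix-of-a-reduced-word strategy, but after the first increasing block use insertion-sort blocks instead of bubble-sort blocks. Take the length-$L$ prefix of $(1,\ldots,n-1)(1)(2,1)(3,2,1)\cdots(n-2,\ldots,1)$; this is exactly the paper's word. Write $L=n-1+\binom h2+s$ with $0\le s<h$. The appended lists have indices at most $h-1$, so they only sort the first $h$ positions of the marked string into zeros followed by ones. Hence each pass of $(1,\ldots,n-1)$ still moves exactly one one from the left to the right end. The final $s$ letters $(h,\ldots,h-s+1)$ finish an order one pass early exactly when $n-h\le k<n-h+s$. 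This gives
\[
t_k(Q)=\min\{k,n-h\}-[\,n-h\le k<n-h+s\,],
\]
whose sum is $\binom n2-\binom h2-s=\binom n2+n-1-L$. Proving this profile, in particular the effect of the partial final list, is the substantive content of the corollary, and your proposal does not supply it.
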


\begin{proof}
Choose \(h,s\) so that
\[
 L=n-1+\binom h2+s,\qquad 1\le h\le n-1,\quad 0\le s<h,
\]
where \(s=0\) if \(h=n-1\).  Every stated \(L\) has this form.
Start with \((1,2,\ldots,n-1)\), append the lists
\((1),(2,1),\ldots,(h-1,h-2,\ldots,1)\), and then append
\((h,h-1,\ldots,h-s+1)\).  Lists of length zero are omitted.
Call the resulting sequence \(Q\), set all factor parameters to one,
and put \(A=U(Q)^TU(Q)\).  Lemma~\ref{lem:5} gives a symmetric
oscillatory integer matrix of determinant one.

All exchanges in the first pass on \((1,\ldots,n)\) occur.
The initial list gives \((2,3,\ldots,n,1)\); the succeeding
lists reverse its first \(h\) entries by successive insertion, and
the last list moves the larger entry \(h+2\) left by \(s\) positions
when \(s>0\).  Hence \(\ell(U(Q))=L\) by Lemma~\ref{lem:12}.

Put \(m=n-h\).  We claim that
\begin{equation}
\label{eq:9}
 t_k(Q)=\min\{k,m\}+
 \begin{cases}
 -1,&m\le k<m+s,\\
 0,&\text{otherwise}.
\end{cases}
\end{equation}
If \(m=1\), then \(s=0\), and one pass sorts the whole string.
Assume henceforth that \(m\ge2\).
The appended insertion lists sort the first \(h\) positions of a
binary string into zeros followed by ones.  First omit the last
\(s\) operations.  After \(r\) passes, for
\(1\le r\le\min\{k,m\}\), the string is
\[
 v_r=0^{a_r}1^{k-r}0^{n-k-a_r}1^r,
 \qquad a_r=\max\{0,h-k+r\}.
\]
Indeed, a pass of \((1,\ldots,n-1)\) removes the leftmost one and
appends it at the right end, after which the first \(h\) positions
are sorted.  This gives the displayed form by induction and finishes
after \(\min\{k,m\}\) passes.

Let \(P\) denote the last \(s\) operations.  With these operations
restored, the string after \(r\) passes is \(P(v_r)\) for the same
range of \(r\).  To check this inductively, \(P\)
does nothing when \(k-r>h\).  Otherwise \(a_r+k-r=h\), and
\(P\) moves the zero in position \(h+1\) left past
\(\min\{s,k-r\}\) ones.  Removing the leftmost one on the next
pass and sorting the first \(h\) positions gives \(v_{r+1}\)
again, whether that zero has crossed all these ones or only some of
them.

For \(r<\min\{k,m\}\), this last movement can finish the string
only when \(r=m-1\) and \(k\ge m\).  Indeed, if \(r<m-1\) and
\(k-r>h\), then \(P\) does nothing.  If \(r<m-1\) and
\(k-r\le h\), the central zero block has length \(m-r\ge2\), and
\(P\) moves only its first zero.  At \(r=m-1\), the string is
\[
 v_{m-1}=0^{n-k-1}1^{k-m+1}0\,1^{m-1}.
\]
It is completed by \(P\) exactly when \(k-m+1\le s\).
This proves \eqref{eq:9}.

By symmetry and Lemma~\ref{lem:6}, \(e_k(A)=t_k(Q)\).  Hence
\[
 \sum_{k=1}^{n-1}e_k(A)
 =\sum_{k=1}^{n-1}\min\{k,n-h\}-s
 =\binom n2-\binom h2-s
 =\binom n2+n-1-L.
\]
Together with Theorem~\ref{thm:12}, this proves the assertion.
\end{proof}

We compare profiles coordinatewise.  A profile is \emph{maximal} within
a class if no other profile in the class has every coordinate at least as
large and one strictly larger.  The symmetric basic profiles have this
extremal property in the full symmetric oscillatory class.

\begin{corollary}
	\label{cor:1}
	A positive integer vector \((f_1,\ldots,f_{n-1})\) is the nontrivial
	exponent profile of a symmetric basic \(n\)-by-\(n\) oscillatory matrix
	if and only if
	\[
	f_k+f_{n-k}=n
	\quad(1\le k<n),
	\qquad
	|f_{k+1}-f_k|\le1
	\quad(1\le k<n-1).
	\]
	These are also exactly the maximal profiles among all symmetric
	oscillatory matrices of order \(n\).
	The number of distinct such profiles is
	\[
	\begin{cases}
	3^{m-1},&n=2m,\\
	2\cdot3^{m-1},&n=2m+1.
	\end{cases}
	\]
\end{corollary}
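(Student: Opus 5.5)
Necessity comes from results already proved: if the vector is the profile of a symmetric basic matrix, then by Lemma~\ref{lem:7} and Theorem~\ref{thm:4} that matrix is \(A(c)=U(c)^T\Delta U(c)\) with \(e_k(A(c))=t_k(c)\), and Lemma~\ref{lem:8} gives \(f_k+f_{n-k}=n\) and \(|f_{k+1}-f_k|\le1\).

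For sufficiency, I would encode the conditions in the differences of the profile. Formula~\eqref{eq:2} and Lemma~\ref{lem:8} give, for \(2k+2\le n\),
\[
t_{k+1}(c)-t_k(c)=1-d_k-d_{n-k-1},
\]
and for odd \(n=2m+1\) the central difference \(1-2d_m\). Given \(f\), complementarity and the Lipschitz condition force \(f_1\in\{1,\ldots,n-1\}\) with \(f_1+f_{n-1}=n\); together with \(|f_2-f_1|\le1\) etc. the first value is determined by the differences through \(f_1\) and the symmetric relation. I would argue instead from the center outward. For even \(n=2m\), \(f_m=m\) is forced, and each difference \(\delta_k=f_{k+1}-f_k\in\{-1,0,1\}\) for \(k<m\) is realized by choosing the pair \((d_k,d_{n-k-1})\) as \((0,0),(1,0),(1,1)\). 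Indices \(k\) and \(n-k-1\) are distinct for \(k\le m-1\); for \(k=m-1\) the pair is \((d_{m-1},d_m)\), which is still two distinct bits. For odd \(n=2m+1\), the central difference \(f_{m+1}-f_m=\pm1\) is forced, since \(f_m+f_{m+1}=n\) is odd, and it is realized by \(d_m\). The outer differences are then handled as in the even case. The values of \(f\) are determined by the center together with the differences, and every binary direction string arises from some permutation \(c\), so the profile is realized.

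The counts follow immediately. For \(n=2m\) there are \(m-1\) free differences, giving \(3^{m-1}\) profiles; for \(n=2m+1\) there are \(m-1\) ternary choices and one binary choice, giving \(2\cdot3^{m-1}\). Distinct difference vectors give distinct profiles. I must also check positivity and \(f_k\le n-1\), but these hold automatically for profiles of matrices.

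Maximality is the main step. By Theorem~\ref{thm:12}, every symmetric oscillatory \(A\) has \(e_k(A)\le e_k(B)\) for some symmetric basic \(B\), so every maximal profile is basic. Conversely, suppose a basic profile \(f\) is dominated by some profile \(g\). Then \(g\) is in turn dominated by a basic profile \(h\ge g\ge f\). Since \(\sum h_k=\sum f_k=\binom n2\), we get \(h=f\), and hence \(g=f\). This shows every basic profile is maximal.
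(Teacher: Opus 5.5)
Your proof is correct and follows the paper's argument. Necessity comes from Lemma~\ref{lem:8}. Sufficiency picks the mirrored direction bits \(d_k,d_{n-k-1}\) with sum \(1-(f_{k+1}-f_k)\), plus the middle bit for odd \(n\); note that your list \((0,0),(1,0),(1,1)\) runs in the reverse order of \(\delta_k=-1,0,1\). The counts then come from the free differences. The only deviation is in showing that basic profiles are maximal: the paper applies the complementary bound \(e_k(A)+e_{n-k}(A)\le n\) from Theorem~\ref{thm:12} directly, while you re-apply basic domination and compare the common sum \(\binom n2\), and both arguments work.
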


\begin{proof}
	By Theorem~\ref{thm:12}, every symmetric oscillatory profile is
	dominated by a symmetric basic profile.  A symmetric basic profile
	cannot be strictly increased coordinatewise within the symmetric class,
	since this would violate a complementary-order bound.
	It remains to prove the stated classification.
	Necessity follows from Lemma~\ref{lem:8}.
	Conversely, for every
	\(k<\lfloor n/2\rfloor\), put
	\[
	\sigma_k=1+f_k-f_{k+1}\in\{0,1,2\}
	\]
	and choose the two direction bits \(d_k,d_{n-k-1}\) to have sum
	\(\sigma_k\).  These pairs are disjoint.  If \(n=2m+1\), set the remaining
	middle bit to \(d_m=f_m-m\in\{0,1\}\); if \(n=2m\), the pairing forces
	\(f_m=m\).  Reading the nested central intervals in
	\eqref{eq:2}, from the center outward, gives \(e_k=f_k\) for
	every \(k\).  Every binary direction string is realized by a factor order.

	For even order there are \(m-1\) independent neighboring differences,
	each with three choices.  For odd order there are the same choices and two
	choices for the middle bit, giving the stated counts.
\end{proof}

For two factor orders \(c_-\) and \(c_+\), write \(t_k(c_\pm)\) for the
right-hand side of \eqref{eq:2} computed from the corresponding
direction string.

\begin{proposition}
	\label{prop:1}
	Let
	\[
	B=U(c_-)^T\Delta U(c_+),
	\]
	where \(\Delta\) is positive diagonal.  Then \(B\) is oscillatory.  Its
	lower and upper half-bandwidths are \(1+r_0(c_-)\) and
	\(1+r_0(c_+)\), respectively; those of \(B^{-1}\) are
	\(1+r_1(c_-)\) and \(1+r_1(c_+)\).  Moreover,
	\[
	e_k(B)=\max\{t_k(c_-),t_k(c_+)\}\qquad(1\le k<n).
	\]
\end{proposition}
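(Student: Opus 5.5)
The plan is to reduce everything to results already in hand: Lemma~\ref{lem:5} for the bandwidths, Lemma~\ref{lem:6} with Corollary~\ref{thm:2} for the exponents, and the computation of \(t_k\) inside the proof of Theorem~\ref{thm:4}.

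\emph{Oscillation and bandwidths.} Since \(c_-\) and \(c_+\) are permutations of \(1,\ldots,n-1\), each contains every index. Lemma~\ref{lem:5} applies with \(w_\pm=c_\pm\). It shows that \(B\) is oscillatory and gives
\[
\operatorname{bw}_\pm B=\ell_\uparrow(c_\pm),\qquad
\operatorname{bw}_\pm B^{-1}=\ell_\downarrow(c_\pm).
\]
In a permutation each index occurs exactly once. Hence a subsequence \(i,i+1,\ldots,j-1\) occurs exactly when \(\pi_i<\pi_{i+1}<\cdots<\pi_{j-1}\), that is, when \(d_i=\cdots=d_{j-2}=0\). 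This is the observation already used in the proof of Theorem~\ref{thm:4}. So \(\ell_\uparrow(c)=1+r_0(c)\), and symmetrically \(\ell_\downarrow(c)=1+r_1(c)\). The lower half-bandwidth comes from \(c_-\) and the upper from \(c_+\), as stated.

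\emph{Profile.} By Corollary~\ref{thm:2}, \(e_k(B)=\max\{\tau_k^{\mathrm{UR}}(B),\tau_k^{\mathrm{LL}}(B)\}\). We apply Lemma~\ref{lem:6} to the product \(U(c_-)^T\Delta U(c_+)\).
\begin{itemize}
\item The upper factor indices in product order are \(Q_+=c_+\).
\item Transposing gives \(B^T=U(c_+)^T\Delta U(c_-)\), so the upper factors in the transposed product list \(Q_-=c_-\).
\end{itemize}
Thus \(\tau_k^{\mathrm{UR}}(B)=t_k(c_+)\) and \(\tau_k^{\mathrm{LL}}(B)=t_k(c_-)\), where \(t_k\) here means the pass count of Lemma~\ref{lem:6}.

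The proof of Theorem~\ref{thm:4} showed that this pass count equals the right-hand side of \eqref{eq:2}. That argument used only the factor order \(c\) and the string dynamics, not symmetry. So it identifies \(t_k(c_\pm)\) with the quantity named in the statement, and the formula for \(e_k(B)\) follows.

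The only step needing care is the bookkeeping of the transpose in Lemma~\ref{lem:6}. One must confirm that reversing the factor order on transposition turns \(U(c_-)^T\) into the upper factor \(U(c_-)\) with the indices still in the order \(c_-\), rather than reversed. This holds because \((x_{c_1}\cdots x_{c_{n-1}})^{TT}\) returns the original product. Beyond that, the proof is a direct assembly.
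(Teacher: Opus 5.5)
Your proposal is correct and follows essentially the same route as the paper. Lemma~\ref{lem:5} gives oscillation and the four bandwidths, which become run lengths because each index occurs once; Lemma~\ref{lem:6} gives the corner times $t_k(c_+)$ and $t_k(c_-)$; and Corollary~\ref{thm:2} takes their maximum. Your explicit remark that the pass-count computation in the proof of Theorem~\ref{thm:4} does not use symmetry is a step the paper leaves implicit when it defines $t_k(c_\pm)$ by formula \eqref{eq:2}.
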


\begin{proof}
Lemma~\ref{lem:5} gives oscillation and expresses the four bandwidths
as \(\ell_\uparrow(c_\pm)\) and \(\ell_\downarrow(c_\pm)\).
Since each index occurs once, these are \(1+r_0(c_\pm)\) and
\(1+r_1(c_\pm)\).  Lemma~\ref{lem:6} identifies the upper-right and
lower-left corner times with \(t_k(c_+)\) and \(t_k(c_-)\), respectively.
Corollary~\ref{thm:2} then gives their maximum.
\end{proof}

By Lemma~\ref{lem:7}, this formula covers every basic
oscillatory matrix.  Taking the maximum over \(k\) recovers the
triangular-exponent identity of \cite[Theorem~12]{FallatLiu}.

\begin{theorem}
	\label{thm:5}
	A positive integer vector \((f_1,\ldots,f_{n-1})\) is the nontrivial
	exponent profile of a basic \(n\)-by-\(n\) oscillatory matrix if and only if
	\begin{align*}
	f_k&\le\max\{k,n-k\},&&1\le k<n,\\
	|f_{k+1}-f_k|&\le1,&&1\le k<n-1,\\
	f_k+f_{n-k}&\ge n,&&1\le k<n.
	\end{align*}
\end{theorem}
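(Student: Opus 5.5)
Necessity and sufficiency are both read off Proposition~\ref{prop:1}, which says every basic oscillatory matrix has profile \(f_k=\max\{t_k(c_-),t_k(c_+)\}\) for two factor orders \(c_\pm\). Lemma~\ref{lem:7} guarantees that every pair \((c_-,c_+)\) arises this way.

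\emph{Necessity.} Put \(g_k=t_k(c_-)\) and \(h_k=t_k(c_+)\). By Lemma~\ref{lem:8}, each of \(g\) and \(h\) is \(1\)-Lipschitz in \(k\), satisfies \(g_k\le\max\{k,n-k\}\), and has complementary sums equal to \(n\). A pointwise maximum of two \(1\)-Lipschitz sequences is again \(1\)-Lipschitz, and the upper bound is clearly preserved. For the last condition,
\[
f_k+f_{n-k}\ge g_k+g_{n-k}=n .
\]

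\emph{Sufficiency.} The task is to write a given \(f\) as \(\max\{g,h\}\), where \(g\) and \(h\) are symmetric-basic profiles. By Corollary~\ref{cor:1}, these are exactly the Lipschitz vectors with \(g_k+g_{n-k}=n\). I would build them by decreasing \(f\) on one side of each complementary pair:
\[
g_k=\begin{cases}f_k,&k\le n/2,\\ n-f_{n-k},&k>n/2,\end{cases}
\qquad
h_k=\begin{cases}n-f_{n-k},&k\le n/2,\\ f_k,&k>n/2.\end{cases}
\]
Then \(g_k+g_{n-k}=n\) by construction. Since \(f_k+f_{n-k}\ge n\), we get \(g_k\le f_k\) for \(k>n/2\), and symmetrically \(h_k\le f_k\) for \(k\le n/2\). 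Hence \(\max\{g_k,h_k\}=f_k\) for every \(k\).

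Next I check that \(g\) is Lipschitz. On each half this is inherited from \(f\), because \(k\mapsto n-f_{n-k}\) is Lipschitz whenever \(f\) is.

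The main obstacle is the junction at the center. For even \(n=2m\), the definition of \(g\) requires \(g_m=m\), but \(f_m\) may exceed \(m\) (only \(f_m\le m\) is automatic, since \(\max\{m,n-m\}=m\)). Combined with \(2f_m\ge n\), this gives \(f_m=m\), so the definition is consistent. Across the junction one compares \(g_m=m\) with \(g_{m+1}=n-f_{m-1}\). Since \(|f_{m-1}-m|\le1\), their difference is at most \(1\).

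For odd \(n=2m+1\) the junction lies between \(m\) and \(m+1\), and I must check
\[
|(n-f_m)-f_m|=|2m+1-2f_m|\le1 .
\]
This holds only if \(f_m\in\{m,m+1\}\), which follows from \(f_m\le m+1\) and \(2f_m\ge n\) (equivalently \(f_m\ge m+1/2\)). The same check covers \(h\).

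Finally, Corollary~\ref{cor:1} supplies factor orders \(c_-\) and \(c_+\) with \(t(c_-)=g\) and \(t(c_+)=h\). Proposition~\ref{prop:1} then gives a basic matrix \(B=U(c_-)^T\Delta U(c_+)\) whose profile is \(f\), explicitly realized by the direction strings of Corollary~\ref{cor:1}.
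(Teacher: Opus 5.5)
Your construction is the paper's own: your $g,h$ are exactly its $a,b$. Your necessity argument, the domination $g_k\le f_k$ for $k>n/2$ and $h_k\le f_k$ for $k\le n/2$, and the even-order center are all correct. At the even center the envelope gives $f_m\le m$ and the complementary condition gives $2f_m\ge n$, so $f_m=m$. Your parenthetical ``$f_m$ may exceed $m$'' should read ``may fall below $m$''.

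The odd center, however, is justified incorrectly. For $n=2m+1$ the complementary condition at $k=m$ is $f_m+f_{m+1}\ge n$, because $n-m=m+1$. The inequality $2f_m\ge n$ is not a hypothesis, and it is false in general. For instance, with $n=3$ the vector $f=(1,2)$ satisfies all three conditions and is the profile of $U(c)^T\Delta U(c)$ for $c=(1,2)$, yet $2f_1=2<3$. Read literally, your argument would force $f_m=m+1$.

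The conclusion you need still holds, by the paper's argument. The envelope gives $f_m,f_{m+1}\le m+1$. Subtracting each of these bounds from $f_m+f_{m+1}\ge 2m+1$ gives $f_m,f_{m+1}\ge m$, so $f_m,f_{m+1}\in\{m,m+1\}$. You need both memberships: the central step of $g$ is $|g_{m+1}-g_m|=|2f_m-n|$, while that of $h$ is $|h_{m+1}-h_m|=|2f_{m+1}-n|$. So the check for $h$ is not literally ``the same''. With this one-line repair your proof is complete.
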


\begin{proof}
	For a basic factorization, put
	\(a_k=t_k(c_-)\) and \(b_k=t_k(c_+)\).  Then
	\(f_k=\max\{a_k,b_k\}\).  Lemma
	\ref{lem:8} gives the envelope and
	Lipschitz bounds for \(a\) and \(b\); taking their coordinatewise maximum
	preserves both.  The complementary identity gives
	\[
	f_k+f_{n-k}
	=\max\{a_k,b_k\}+\max\{n-a_k,n-b_k\}
	=n+|a_k-b_k|\ge n.
	\]

	Conversely, suppose the three conditions hold.  For every mirror pair
	\(k<n-k\), define
	\[
	a_k=f_k,\qquad a_{n-k}=n-f_k,
	\]
	and
	\[
	b_k=n-f_{n-k},\qquad b_{n-k}=f_{n-k}.
	\]
	If \(n=2m\), the first and third conditions force
	\(f_m=m\); set \(a_m=b_m=m\).  Both vectors have positive entries,
	since \(1\le f_k\le n-1\), and each has complementary sum \(n\).
	Away from the center, their neighboring differences come from those of
	\(f\), possibly with signs reversed, and hence have size at most one.
	For even \(n=2m\), the steps adjacent to the center satisfy the same
	bound because \(a_m=b_m=f_m=m\).  For odd \(n=2m+1\), the bounds
	\(f_m,f_{m+1}\le m+1\) and \(f_m+f_{m+1}\ge2m+1\) give
	\(f_m,f_{m+1}\in\{m,m+1\}\); the central steps in \(a,b\) therefore
	also have size at most one.  Corollary~\ref{cor:1} realizes
	\(a\) and \(b\) by factor orders.

	On the left half, the complementary lower inequality gives
	\(b_k=n-f_{n-k}\le f_k=a_k\); on the right half the roles are reversed.
	Consequently \(f_k=\max\{a_k,b_k\}\) for every \(k\), and
	Proposition~\ref{prop:1} realizes \(f\).
\end{proof}

The run lengths also determine the possible bandwidth pairs in the basic class.

\begin{corollary}
\label{cor:2}
A pair of positive integers \((p,q)\) occurs as
\[
 \bigl(\operatorname{bw}A,\operatorname{bw}A^{-1}\bigr)
\]
for a symmetric basic oscillatory matrix of order \(n\) if and only if
\[
 (p,q)=(1,n-1),\qquad (p,q)=(n-1,1),
\]
or
\[
 p,q\ge2,
 \qquad p+q\le n.
\]
Consequently, for \(p,q\ge2\), the least order in the symmetric basic
class is \(p+q\).
\end{corollary}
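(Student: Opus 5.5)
By Lemma~\ref{lem:7} and Theorem~\ref{thm:4}, a symmetric basic matrix is \(A(c)=U(c)^T\Delta U(c)\), and
\[
\operatorname{bw}A(c)=1+r_0(c),\qquad \operatorname{bw}A(c)^{-1}=1+r_1(c).
\]
Every binary direction string \(d_1,\ldots,d_{n-2}\) arises from some factor order. The corollary therefore reduces to a purely combinatorial question: which pairs \((1+r_0,1+r_1)\) occur, where \(r_0,r_1\) are the longest runs of zeros and of ones in a binary string of length \(n-2\)? I will answer this directly.

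\emph{Necessity.} Suppose first that the string has no ones. Then it is constant zero, so \(r_0=n-2\), \(r_1=0\), and \((p,q)=(n-1,1)\). Symmetrically, a string with no zeros gives \((1,n-1)\); for \(n=2\) the empty string gives \((1,1)=(1,n-1)\). Otherwise both symbols occur. A longest zero run and a longest one run are disjoint blocks inside the string, so \(r_0+r_1\le n-2\) with \(r_0,r_1\ge1\). This gives \(p,q\ge2\) and \(p+q\le n\).

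\emph{Sufficiency.} The pairs \((1,n-1)\) and \((n-1,1)\) come from the constant strings. For \(p,q\ge2\) with \(p+q\le n\), I take
\[
d=0^{p-1}1^{q-1}0^{\,n-p-q}.
\]
Its length is \(n-2\). The longest one run is \(q-1\). The longest zero run is \(\max\{p-1,n-p-q\}\), and this equals \(p-1\) only when \(n-p-q\le p-1\). This is the step that needs care. To fix it, I will pad instead with an alternating tail \(0101\cdots\) of length \(n-p-q\), placed after a separating bit. Concretely, I use \(0^{p-1}1^{q-1}\) followed by a tail that alternates starting with \(0\). A tail starting with \(0\) does not lengthen the one run, and its isolated zeros have length \(1\le p-1\). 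Any ones in the tail also have length \(1\le q-1\). So the runs are exactly \(r_0=p-1\) and \(r_1=q-1\), and a factor order with this direction string realizes \((p,q)\). Taking all parameters and \(\Delta\) equal to one gives an integer realization, if one is wanted.

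\emph{Least order.} For \(p,q\ge2\), the condition \(p+q\le n\) says \(n\ge p+q\), and the construction above works at \(n=p+q\) with an empty tail. Hence the least order in the symmetric basic class is \(p+q\).

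The only delicate point is making sure the padding does not create a run longer than \(p-1\) or \(q-1\). The alternating tail settles this. The one other check is that \(p,q\ge2\) guarantees both blocks are nonempty, so neither run degenerates.
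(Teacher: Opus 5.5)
Your proof is correct and follows the paper's argument. You reduce via Lemma~\ref{lem:7} and Theorem~\ref{thm:4} to run lengths in a direction string of length \(n-2\), obtain necessity from the disjointness of a longest zero-run and a longest one-run, and realize the remaining pairs by an explicit string. The only difference is the padding: the paper repeats the block \(0^{p-1}1^{q-1}\) periodically and truncates to length \(n-2\), whereas you use a single block followed by an alternating tail \(0101\cdots\) of length \(n-p-q\). Your construction works equally well, since \(p,q\ge2\) makes the isolated runs in the tail harmless; there is no extra separating bit, as your concrete description already makes clear.
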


\begin{proof}
The two exceptional pairs come from the constant direction strings.  If
both symbols occur, longest zero- and one-runs of lengths \(p-1\) and
\(q-1\) are disjoint subintervals of a string of length \(n-2\).  Hence
\(p+q\le n\).

Conversely, if \(p,q\ge2\) and \(p+q\le n\), repeat the block
\[
 0^{p-1}1^{q-1}
\]
and truncate it to length \(n-2\).  It contains a complete block, and its
longest zero- and one-runs have the prescribed lengths.  Apply
Theorem~\ref{thm:4}.
\end{proof}

For \(n=2\), the direction string is empty, both bandwidths equal one,
and the only nontrivial exponent is \(e_1=1\).  All preceding formulas
include this case.

\section{Simultaneous matrix and inverse bandwidths}
\label{sec:4}

To describe the possible bandwidth pairs at order \(n\), put
\[
\mathcal P_n=\{(1,n-1),(n-1,1)\}\cup\{2,\ldots,n-1\}^2.
\]
For \(n=2\), this set consists of \((1,1)\).

\begin{theorem}
\label{thm:6}
Let \(n\ge2\).  There exists a symmetric oscillatory matrix \(A\) with
\[
(\operatorname{bw}A,\operatorname{bw}A^{-1})=(p,q)
\]
if and only if \((p,q)\in\mathcal P_n\).  Every admissible pair is
realized by a symmetric integer oscillatory matrix of determinant one.
\end{theorem}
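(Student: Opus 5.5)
Necessity and sufficiency are handled separately. By Lemma~\ref{lem:5} and the bidiagonal factorization, we can write \(A=U(Q)^T\Delta U(Q)\) with \(Q\) a positive elementary index sequence containing every index \(1,\ldots,n-1\). Then \(\operatorname{bw}A=\ell_\uparrow(Q)\) and \(\operatorname{bw}A^{-1}=\ell_\downarrow(Q)\). Both lie in \(\{1,\ldots,n-1\}\), since every index occurs, so each length is at least one and at most \(n-1\).

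For necessity, it remains to exclude \(p=1\) with \(q<n-1\), and symmetrically \(q=1\) with \(p<n-1\). If \(\ell_\uparrow(Q)=1\), then no occurrence of \(i\) precedes any occurrence of \(i+1\), for any \(i\). Hence every occurrence of \(i+1\) comes before every occurrence of \(i\). Taking the last occurrence of \(n-1\), then an occurrence of \(n-2\) after it, and so on, produces the subsequence \(n-1,n-2,\ldots,1\). This gives \(\ell_\downarrow(Q)=n-1\). The case \(q=1\) follows by the same argument with increasing and decreasing exchanged, or equivalently by passing to \(A^\vee\).

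For sufficiency, the pairs \((1,n-1)\) and \((n-1,1)\) come from the constant direction strings (Corollary~\ref{cor:2}). For \(2\le p,q\le n-1\) with \(p+q\le n\), Corollary~\ref{cor:2} already gives a symmetric basic realization. The remaining case \(p+q>n\) needs non-basic sequences in which indices repeat. I would take
\[
Q=(1,2,\ldots,p-1)\;\text{followed by}\;(q-1,q-2,\ldots,1),
\]
and then append whatever is needed to cover the missing indices without creating longer runs.

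A cleaner choice first lists the descending block \(n-1,\ldots,n-q+1\) so that every index appears. Concretely, I would use
\[
Q=(n-1,n-2,\ldots,1)\;\text{followed by}\;(1,2,\ldots,p-1),
\]
modified so that the decreasing length is exactly \(q\). For example:
\[
Q=(q-1,q-2,\ldots,1,\;1,2,\ldots,n-1)\ \text{trimmed}.
\]
The careful form I would aim for is
\[
Q=(n-1,\ldots,p)\,(q-1,\ldots,1)\,(1,\ldots,p-1),
\]
or a variant of it, with \(\ell_\uparrow(Q)\) and \(\ell_\downarrow(Q)\) computed directly from the definitions by tracking where a longest consecutive chain can switch between blocks. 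Setting all parameters to one, Lemma~\ref{lem:5} then yields a symmetric integer matrix of determinant one.

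The main obstacle is choosing the sequence so that the consecutive subsequences crossing block boundaries do not exceed \(p\) or \(q\). My plan is to use blocks of increasing and decreasing chains placed so that any chain crossing a boundary must restart. I would then verify, by a short case check on where a chain begins, that the longest increasing chain has length exactly \(p\) and the longest decreasing chain has length exactly \(q\).
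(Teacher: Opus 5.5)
Your necessity argument is correct and differs from the paper's. The paper notes that \(p=1\) makes \(A\) tridiagonal and reads \(q=n-1\) off \((A^{-1})_{1n}=(-1)^{n+1}a_{12}\cdots a_{n-1,n}/\det A\neq0\); your chain argument on a positive factor sequence of \(U\) works equally well. Reducing the pairs with \(p+q\le n\) to Corollary~\ref{cor:2} is also fine.

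The gap is the case \(p+q>n\), the only part of sufficiency that needs a new construction. There you never fix a sequence, and the ones you write down fail. Your first candidate uses blocks \((1,\ldots,p-1)\) and \((q-1,\ldots,1)\), of lengths \(p-1\) and \(q-1\). That is the run-length bookkeeping of the basic case, where \(\operatorname{bw}=1+r_0\). Lemma~\ref{lem:5} instead equates the bandwidth with the length of the consecutive chain itself. Your ``careful form'' \((n-1,\ldots,p)(q-1,\ldots,1)(1,\ldots,p-1)\) already fails for \(n=5\), \(p=q=3\). There it is \(Q=(4,3,2,1,1,2)\), which contains the decreasing chain \(4,3,2,1\) but no increasing consecutive chain of length three, so it realizes \((2,4)\). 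In general, when \(q\ge p\) its first two blocks contain the full chain \(n-1,\ldots,1\). Also, every occurrence of \(p-1\) follows every occurrence of \(p\), so its increasing length stays below \(p\) once \(p\ge3\). Appealing to ``a variant'' and ``a short case check'' leaves existence unproved.

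The missing construction is the two-block sequence you started toward:
\[
w=(n-1,n-2,\ldots,n-q+1,\;1,2,\ldots,p).
\]
The hypothesis \(p\ge n-q\) is exactly what makes every index occur. An increasing consecutive chain can use at most one term of the decreasing first block. Once in the second block it ends at a value at most \(p\), so its length is at most \(p\), which \(1,\ldots,p\) attains. A decreasing chain uses at most one term of the increasing second block, so its length is at most \((q-1)+1\). The chain \(n-1,\ldots,n-q+1,n-q\) attains \(q\) because \(1\le n-q\le p\). With all parameters one, Lemma~\ref{lem:5} gives the symmetric integer matrix of determinant one. This single sequence handles every pair with \(p+q\ge n\), including \((1,n-1)\) and \((n-1,1)\). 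Corollary~\ref{cor:2} is then needed only for \(p+q<n\).
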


\begin{proof}
Both \(A\) and \(A^\vee\) are oscillatory, so
\(1\le p,q\le n-1\).  If \(p=1\), then \(A\) is tridiagonal and
\[
(A^{-1})_{1n}=(-1)^{n+1}
\frac{a_{12}a_{23}\cdots a_{n-1,n}}{\det A}\ne0.
\]
Thus \(q=n-1\).  Applying this argument to \(A^\vee\) treats \(q=1\).
These are all necessary restrictions.

For sufficiency, let \((p,q)\in\mathcal P_n\).  If \(n\le p+q\), take
\begin{equation}
\label{eq:4}
w=(n-1,n-2,\ldots,n-q+1,\ 1,2,\ldots,p),
\end{equation}
where the first segment is empty for \(q=1\).  Since \(p\ge n-q\),
every index occurs.  The second segment contains an increasing sequence of
length \(p\).  Any increasing subsequence uses at most one index from the
first segment; if it then enters the second, its terminal value is at most
\(p\), so its length is at most \(p\).  Thus
\(\ell_\uparrow(w)=p\).

The first segment followed by the occurrence of \(n-q\) in the second
gives a decreasing sequence of length \(q\).  A decreasing subsequence
uses at most one index from the second segment, so
\(\ell_\downarrow(w)=q\).  Lemma~\ref{lem:5}
gives the required matrix \(A=U(w)^TU(w)\), with all parameters one.
This also treats the two pairs having an entry one.

If \(n>p+q\), admissibility forces \(p,q\ge2\).  Repeat the block
\(0^{p-1}1^{q-1}\) and truncate it to length \(n-2\).  The resulting
direction string contains a full block and has longest zero and one runs
of lengths \(p-1\) and \(q-1\).  The construction in
Corollary~\ref{cor:2} gives a permutation \(c\)
realizing the pair.  Again choose \(A=U(c)^TU(c)\) with all parameters
one.  The determinant and integrality assertions follow from
Lemma~\ref{lem:5}.
\end{proof}

\begin{corollary}
\label{cor:3}
For \((p,q)\in\mathcal P_n\), the smallest number of positive elementary
upper factors in a representation of an oscillatory matrix
\[
A=U(w)^T\Delta U(w),\qquad
(\operatorname{bw}A,\operatorname{bw}A^{-1})=(p,q),
\]
is
\[
L_{\min}(n;p,q)=\max\{n-1,p+q-1\}.
\]
Here only the factors in \(U(w)\) are counted.  The minimum is over the
matrices, the sequences, the positive parameters and the positive diagonal
matrices \(\Delta\).
\end{corollary}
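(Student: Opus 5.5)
Two inequalities are needed: a lower bound valid for every admissible representation, and a matching construction.

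\emph{Lower bound.} Let \(A=U(w)^T\Delta U(w)\) with \((\operatorname{bw}A,\operatorname{bw}A^{-1})=(p,q)\) and \(w\) of length \(L\). By Theorem~\ref{thm:1}, every index \(1,\ldots,n-1\) occurs in \(w\), so \(L\ge n-1\). By Lemma~\ref{lem:5}, \(\operatorname{bw}A=\ell_\uparrow(w)\) and \(\operatorname{bw}A^{-1}=\ell_\downarrow(w)\), since both sides of the factorization use \(w\). Hence \(w\) contains a subsequence \(i,i+1,\ldots,i+p-1\) at positions \(P_1\), and a subsequence \(j,j-1,\ldots,j-q+1\) at positions \(P_2\). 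An increasing and a decreasing subsequence share at most one position, so \(|P_1\cup P_2|\ge p+q-1\). Therefore \(L\ge p+q-1\), and so \(L\ge\max\{n-1,p+q-1\}\).

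\emph{Upper bound.} If \(p+q\le n\), then \((p,q)\in\mathcal P_n\) is either an exceptional pair or satisfies \(p,q\ge2\). The permutations from Corollary~\ref{cor:2} and the proof of Theorem~\ref{thm:6} have length exactly \(n-1\). The exceptional pairs \((1,n-1)\) and \((n-1,1)\) are realized by the decreasing and increasing orders, also of length \(n-1\). If \(p+q>n\), take the sequence \eqref{eq:4}, namely \((n-1,\ldots,n-q+1,1,\ldots,p)\). Its length is \((q-1)+p=p+q-1\), and the proof of Theorem~\ref{thm:6} already shows \(\ell_\uparrow=p\), \(\ell_\downarrow=q\), and that every index occurs. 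When \(p+q=n\), this sequence has length \(n-1\), which agrees with both cases. In every case the length equals \(\max\{n-1,p+q-1\}\), so the minimum is attained.

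The main point to get right is in the lower bound: the minimum is taken over representations with the same sequence \(w\) on both sides. The paper's minimum is over sequences, and the two bandwidths are read off the single \(w\) through Lemma~\ref{lem:5}, so the argument counts only the factors in \(U(w)\). The one-position-overlap argument then gives \(p+q-1\). This is the step I expect to need care, but it is short once Lemma~\ref{lem:5} is invoked.
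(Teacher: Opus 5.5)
Your proof is correct and follows essentially the same route as the paper: the lower bound comes from two facts, that every index occurs and that the increasing and decreasing consecutive subsequences supplied by Lemma~\ref{lem:5} share at most one position; the upper bound comes from the permutation construction when $p+q\le n$ and from the sequence \eqref{eq:4} when $p+q\ge n$. One small imprecision: the occurrence of every index needs Theorem~\ref{thm:1} combined with the support relation \eqref{eq:1}, because a missing index $i$ forces $a_{i,i+1}=0$, and this is how the paper phrases it.
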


\begin{proof}
Every adjacent index must occur: otherwise the corresponding first
off-diagonal entry of \(A\) is zero by \eqref{eq:1}.
Hence \(L\ge n-1\).  Lemma~\ref{lem:5} also
gives increasing and decreasing consecutive subsequences of lengths
\(p\) and \(q\).  Their sets of occurrence positions intersect in at
most one position, since two common positions would require their values
to be both increasing and decreasing.  Thus \(L\ge p+q-1\).

The sequence \eqref{eq:4} has length \(p+q-1\) when
\(n\le p+q\).  The permutation construction has length \(n-1\) when
\(n>p+q\).  Both lower bounds are therefore attained.
\end{proof}

For the pairs \((1,n-1)\) and \((n-1,1)\), the length \(n-1\)
is the bidiagonal count in \cite[Section~3]{JohnsonOleskyDriessche},
applied to \(U(w)^T\) or its inverse, respectively.
For \(n=3\), the lengths \(2,2,3\) for the pairs
\((p,q)=(1,2),(2,1),(2,2)\), respectively, also follow from the triangular
classification in \cite[Theorem~11]{JohnsonOleskyDriessche}, applied to
\(U(w)^T\).

Consequently, among admissible pairs, a basic realization exists exactly
when \(p+q\le n\).  The minimum number of occurrences beyond the first
occurrence of each index is \(\max\{0,p+q-n\}\).

The two triangular sides can also be chosen independently.

\begin{corollary}
\label{cor:4}
There exists an oscillatory matrix \(A\) of order \(n\) satisfying
\[
\operatorname{bw}_{\pm}A=p_\pm,\qquad
\operatorname{bw}_{\pm}A^{-1}=q_\pm
\]
if and only if
\[
(p_-,q_-)\in\mathcal P_n,
\qquad (p_+,q_+)\in\mathcal P_n.
\]
Every admissible quadruple has an integer realization of determinant one.
\end{corollary}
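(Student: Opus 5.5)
Necessity and sufficiency are handled separately; both reduce to the one-sided arguments already given in Theorem~\ref{thm:6}.

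\emph{Necessity.} Suppose \(A\) is oscillatory with the four prescribed bandwidths. By Theorem~\ref{thm:1} both first off-diagonals are positive, so \(p_\pm\ge1\). Proposition~\ref{thm:3} shows that \(A^\vee=DA^{-1}D\) is oscillatory, and conjugation by \(D\) preserves supports, so \(q_\pm\ge1\) as well. All four values are at most \(n-1\) trivially. It remains to exclude \(p_+=1<q_+<n-1\) and the analogous cases. If \(p_+=\operatorname{bw}_+A=1\), I will use the \(LDU\) factorization \(A=L\Delta U\) of the nonsingular TN matrix. Then \((A^{-1})_{1n}\) is the \((1,n)\) entry of \(U^{-1}\Delta^{-1}L^{-1}\), which equals \((U^{-1})_{1n}\Delta_{nn}^{-1}\), because \(L^{-1}\) is unit lower triangular and only its row \(n\) contributes. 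Since \(A\) and \(U\) have the same upper support (argued as in the proof of Lemma~\ref{lem:5}), \(U\) is unit upper bidiagonal with positive superdiagonal. Hence \((U^{-1})_{1n}=(-1)^{n-1}\prod u_{i,i+1}\ne0\), and so \(q_+=n-1\). The case \(q_+=1\) follows by applying this argument to \(A^\vee\). The lower side follows by transposition, since \(A^T\) is oscillatory and transposition swaps \(\operatorname{bw}_\pm\). For the upper-support claim without a factorization into elementary factors, I can invoke Lemma~\ref{lem:12}: \(U\) is a positive elementary product, and then \eqref{eq:1} together with the support argument of Lemma~\ref{lem:5} applies directly.

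\emph{Sufficiency.} By the proof of Theorem~\ref{thm:6}, for each \((p,q)\in\mathcal P_n\) there is a sequence \(w(p,q)\) containing every index \(1,\ldots,n-1\) with \(\ell_\uparrow(w)=p\) and \(\ell_\downarrow(w)=q\). This is either the sequence \eqref{eq:4} or the permutation built from a truncated periodic direction string. Set \(w_-=w(p_-,q_-)\), \(w_+=w(p_+,q_+)\), and
\[
A=U(w_-)^T\,U(w_+),
\]
with all parameters one and \(\Delta=I\). Lemma~\ref{lem:5} does not require \(w_-=w_+\). It gives that \(A\) is oscillatory, that \(\operatorname{bw}_\pm A=\ell_\uparrow(w_\pm)=p_\pm\) and \(\operatorname{bw}_\pm A^{-1}=\ell_\downarrow(w_\pm)=q_\pm\), and that \(A\) is an integer matrix of determinant one.

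\emph{Main obstacle.} The only delicate point is making the necessity argument genuinely one-sided. Theorem~\ref{thm:6} used full tridiagonality, but here only the upper side is narrow. The \(LDU\) reduction above handles this, since the \((1,n)\) entry of the inverse depends only on \(U\) and \(\Delta\).
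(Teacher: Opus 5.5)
Your proof is correct. The sufficiency half is exactly the paper's: take the sequences \(w_\pm\) from the proof of Theorem~\ref{thm:6} and apply Lemma~\ref{lem:5} to \(U(w_-)^TU(w_+)\) with all parameters one.

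For necessity you take a different route from the paper.

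\textbf{The paper's route.} It reuses the cofactor formula directly. If \(\operatorname{bw}_+A=1\), then \(A[\{n\}^c,\{1\}^c]\) (rows \(1,\ldots,n-1\), columns \(2,\ldots,n\)) is lower triangular with diagonal \(a_{12},\ldots,a_{n-1,n}\). Hence \((A^{-1})_{1n}=(-1)^{n+1}a_{12}\cdots a_{n-1,n}/\det A\ne0\). That computation never uses the lower part of \(A\), so the obstacle you identify is not really there. The argument in Theorem~\ref{thm:6} was already one-sided. It needs only \(a_{ij}=0\) for \(j>i+1\) and \(a_{i,i+1}\ne0\), with no total nonnegativity and no factorization.

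\textbf{Your route.} The \(L\Delta U\) reduction is also valid. Since \(L^{-1}\) is unit lower triangular, \((A^{-1})_{1n}=(U^{-1})_{1n}/\Delta_{nn}\), and \(U\) is forced to be unit upper bidiagonal. It costs more machinery, though: the TN factorization, a support comparison, and the inverse of a bidiagonal matrix.

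The support comparison is also lighter than you make it. The inequality \(a_{ij}\ge\Delta_{ii}u_{ij}\ge0\) already forces \(u_{ij}=0\) for \(j>i+1\). Then \(a_{i,i+1}=\Delta_{ii}u_{i,i+1}\), so the appeal to Lemma~\ref{lem:12} is unnecessary.

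In return, your version gives the entry in factored form, \((A^{-1})_{1n}=(-1)^{n-1}\prod_i u_{i,i+1}/\Delta_{nn}\), which shows it depends only on the upper factor. Using \(a_{i,i+1}=\Delta_{ii}u_{i,i+1}\) and \(\det A=\prod_i\Delta_{ii}\), this agrees with the paper's cofactor expression.
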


\begin{proof}
Suppose \(p_+=1\).  Deleting the last row and first column of \(A\)
leaves a lower triangular matrix with diagonal
\(a_{12},\ldots,a_{n-1,n}\).  The cofactor formula used in
Theorem~\ref{thm:6} therefore gives \(q_+=n-1\), without a
symmetry assumption.  Transposition and checkerboard inversion give the
other necessary restrictions.  For sufficiency, choose the two sequences
\(w_-,w_+\) supplied by the proof of Theorem~\ref{thm:6} for
the respective pairs.  Lemma~\ref{lem:5} applied
to \(A=U(w_-)^TU(w_+)\), with all parameters one, proves the claim.
\end{proof}

\subsection{Profiles at fixed bandwidths}

The construction \eqref{eq:4} also maximizes all the minor-order
exponents simultaneously when \(p+q\ge n\).

\begin{theorem}
\label{thm:13}
Let \(A\) be a symmetric oscillatory matrix of order \(n\), with
\(p=\operatorname{bw}A\) and \(q=\operatorname{bw}A^{-1}\).
For \(1\le k<n\),
\begin{equation}
\label{eq:11}
 e_k(A)\le
 \min\{\max\{k,n-p\},\max\{n-k,n-q\}\}.
\end{equation}
If \(p+q\ge n\), the matrices
\(U(w)^T\Delta U(w)\), with \(w\) as in \eqref{eq:4},
arbitrary positive parameters and positive diagonal \(\Delta\),
have bandwidth pair \((p,q)\) and attain equality in \eqref{eq:11}
for every \(k\). They use the minimum \(p+q-1\) upper factors.
Taking all parameters and diagonal entries equal to one gives an
integer realization of determinant one.
\end{theorem}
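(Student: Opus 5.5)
The upper bound \eqref{eq:11} has two halves, linked by the checkerboard inverse. I will first prove \(e_k(A)\le\max\{k,n-p\}\) directly, and then apply that inequality to \(A^\vee\).

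\emph{First half.} Corollary~\ref{thm:2} and symmetry give \(e_k(A)=\tau_k^{\mathrm{UR}}(A)\). So it suffices to show \(\Delta_{L_k,R_k}(A^m)>0\) for \(m=\max\{k,n-p\}\). Write \(A=U^T\Delta U\) with \(U\) a positive elementary product of index sequence \(Q\). Lemma~\ref{lem:6} identifies this corner time with \(t_k(Q)\). Lemma~\ref{lem:5} gives \(\ell_\uparrow(Q)=p\), so \(Q\) contains a consecutive increasing subsequence \(i,i+1,\ldots,i+p-1\) and also every index. I then bound \(t_k(Q)\) by comparison: deleting occurrences only slows the all-exchanges process, by the coordinate comparison in Lemma~\ref{lem:6}. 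So I may keep only that increasing run, together with one occurrence of each remaining index, placed as unfavourably as possible. The claim to prove is that such a reduced sequence finishes \(1^k0^{n-k}\to0^{n-k}1^k\) within \(\max\{k,n-p\}\) passes.

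Intuitively, each pass moves the leftmost unfinished one through the increasing run a distance \(p\), in addition to the movement of the basic part. Since Theorem~\ref{thm:12} gives \(e_k\le e_k(B)\le\max\{k,n-k\}\) for a basic \(B\), the refined bound should follow from the pass-index recursion \(s_{p,b}\) in the proof of Theorem~\ref{thm:4}, adapted to a sequence with the extra run. Alternatively, I will use the trace argument: the \(j\)-th one from the right reaches its final position after at most \(\max\{j,\ldots\}\) passes. This is the step I expect to be the main obstacle. The cleanest route I would try first is induction on passes: after \(r\) passes the string is dominated coordinatewise by \(0^{\min(r,\cdot)}\cdots\), similar to the strings \(v_r\) in Corollary~\ref{cor:7}. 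I would show that after \(r\) passes the \(j\)-th one (counted from the right) sits at position at least \(\min\{n-j+1,\ k-j+1+r+(\text{run gain})\}\), with the run granting an extra jump to \(n-j+1\) once \(r\ge n-p\).

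\emph{Second half.} Proposition~\ref{thm:3} gives \(e_k(A)=e_{n-k}(A^\vee)\). Also \(A^\vee=DA^{-1}D\) is symmetric oscillatory with \(\operatorname{bw}A^\vee=q\). Applying the first half to \(A^\vee\) at order \(n-k\) yields \(e_k(A)\le\max\{n-k,n-q\}\). Taking the minimum of the two bounds gives \eqref{eq:11}.

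\emph{Attainment for \(p+q\ge n\).} Theorem~\ref{thm:6} gives the bandwidth pair of \(w\) in \eqref{eq:4}, and Corollary~\ref{cor:3} gives the factor count \(p+q-1\) and its minimality. Lemma~\ref{lem:5} gives the integer determinant-one realization. By symmetry and Lemma~\ref{lem:6}, \(e_k=t_k(w)\) for any positive parameters, so it remains to compute \(t_k(w)\) by tracking strings as in Corollary~\ref{cor:7}.

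The decreasing block \(n-1,\ldots,n-q+1\) moves a one sitting just left of position \(n\) all the way to \(n\), provided the intermediate entries are zeros. The increasing block \(1,\ldots,p\) moves the leftmost one of \(1^k\) to position \(p+1\). The resulting pass-by-pass strings are explicit, and counting passes should give exactly \(\min\{\max\{k,n-p\},\max\{n-k,n-q\}\}\). A convenient check is that the two regimes \(k\le n-p\), \(k\ge q\), and so on are exchanged by \(w\mapsto\) reversal, which corresponds to \(A\mapsto A^\vee\). So only the range \(k\le n/2\) needs direct computation, and the rest follows from Proposition~\ref{thm:3}.
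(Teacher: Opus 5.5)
Your framework matches the paper's. You reduce to the basic subsequence, keeping the run plus one occurrence of each other index and justifying this by deletion monotonicity. You obtain the second half from \(A^\vee\), and you cite Theorem~\ref{thm:6}, Corollary~\ref{cor:3} and Lemma~\ref{lem:5} for the bandwidths, factor count and integrality. However, the quantitative step you call the main obstacle is never carried out, and the invariant you propose for it is false.

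\textbf{Upper bound.} What you miss is that the reduced list is a permutation of \(1,\ldots,n-1\). It therefore defines a symmetric basic matrix, and formula \eqref{eq:2} of Theorem~\ref{thm:4} applies verbatim; nothing needs to be adapted. The run contributes \(p-1\) consecutive zeros to the direction string. For \(2k\le n\), at most \(k-1\) positions of \([1,n-2]\) lie on either side of the central interval \([k,n-k-1]\). So the block meets that interval in at least \(\min\{\max\{p-k,0\},n-2k\}\) positions, and \eqref{eq:2} gives \(e_k(A)\le n-k-\min\{\max\{p-k,0\},n-2k\}=\min\{n-k,\max\{k,n-p\}\}\). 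For \(2k\ge n\), basic domination already gives \(e_k(A)\le k\).

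Your proposed invariant says that after \(r\) passes the \(j\)-th one from the right sits at position at least \(\min\{n-j+1,k-j+1+r\}\), even before any run gain. It fails for \(c=(1,\ldots,n-1)\), \(k=2\), \(n\ge3\). The first pass sends the one at position \(2\) to position \(n\) and leaves the one at position \(1\) fixed, but the invariant requires position at least \(2\). Likewise, an increasing run carries a one whose right neighbour is zero, not the leftmost unfinished one.

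\textbf{Attainment.} You give no lower bound on \(t_k(w)\), and your description of the dynamics is inaccurate. A decreasing block moves each one at most one position. Applied to \(1^k0^{n-k}\) with \(k\le p\), the block \(1,\ldots,p\) moves the one at position \(k\), not the leftmost one, to \(p+1\).

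The reversal shortcut also fails. Reversing \eqref{eq:4} gives \((p,\ldots,1,n-q+1,\ldots,n-1)\), which is not \eqref{eq:4} with \(p,q\) interchanged, unlike \(w_{p,q}\) in Proposition~\ref{thm:7}. So the range \(k>n/2\) would still need its own computation.

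The paper instead proves three lower bounds valid for every \(k\):
\begin{enumerate}
\item \(w\) is a subsequence of \((n-1,\ldots,1,1,\ldots,n-1)\), whose pass time is \(\min\{k,n-k\}\). Deletions cannot speed this up, by Lemma~\ref{lem:6}.
\item The rightmost one must make \(\min\{n-p,n-k\}\) exchanges at indices at least \(\max\{k,p\}\). It makes at most one of these per pass, because indices above \(p\) occur only in the decreasing segment, which precedes the final entry \(p\) of \(w\).
\item The leftmost zero must cross \(\min\{k,n-q\}\) of the indices \(1,\ldots,n-q\). These occur only in the increasing segment, so again at most one crossing happens per pass.
\end{enumerate}
Put \(a=n-p\) and \(b=n-q\). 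The maximum of the three bounds equals \(\min\{\max\{k,a\},\max\{n-k,b\}\}\), because \(a+b\le n\) makes the extra distributed term \(\min\{a,b\}\) redundant.
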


\begin{proof}
Write \(A=U^T\Delta U\) and choose a positive elementary
factorization of \(U\). By Lemma~\ref{lem:5}, its index list contains
an increasing consecutive subsequence of length \(p\).
Retain these occurrences and one occurrence of each remaining index.
As in Theorem~\ref{thm:12}, the resulting symmetric basic matrix
\(B\) satisfies \(e_k(A)\le e_k(B)\). Its direction string contains
a consecutive block of \(p-1\) zeros.

Suppose \(2k\le n\). A block of \(p-1\) positions in
\([1,n-2]\) meets the central interval \([k,n-k-1]\) in at least
\(\min\{\max\{p-k,0\},n-2k\}\) positions. Indeed, the intersection
is smallest when the block is placed at an end, with at most \(k-1\)
positions before or after the central interval. Formula \eqref{eq:2}
therefore gives
\[
 \begin{split}
 e_k(A)&\le n-k-\min\{\max\{p-k,0\},n-2k\}\\
       &=\min\{n-k,\max\{k,n-p\}\}
        \le\max\{k,n-p\}.
 \end{split}
\]
For \(2k\ge n\), basic domination gives \(e_k(A)\le k\).
Thus \(e_k(A)\le\max\{k,n-p\}\) in every case.
Apply this bound to \(A^\vee\) at order \(n-k\).
Proposition~\ref{thm:3} gives
\(e_k(A)=e_{n-k}(A^\vee)\le\max\{n-k,n-q\}\), proving
\eqref{eq:11}.

For simultaneous equality, put \(a=n-p\) and \(b=n-q\);
then \(a+b\le n\). The bandwidth and factor-count assertions for
\(w\) follow from Theorem~\ref{thm:6} and Corollary~\ref{cor:3}.
It remains to bound its pass times from below.
The list \(w\) is a subsequence of a complete decreasing pass
\((n-1,\ldots,1)\) followed by a complete increasing pass
\((1,\ldots,n-1)\). After \(r\) such full passes, the initial string
\(1^k0^{n-k}\) is
\[
 0^r1^{k-r}0^{n-k-r}1^r
\]
until one central segment disappears. Its completion time is
\(\min\{k,n-k\}\). Deleting operations cannot decrease this time,
by Lemma~\ref{lem:6}, so \(t_k(w)\ge\min\{k,n-k\}\).

The rightmost one starts at position \(k\) and finishes at position
\(n\). If \(k\le p\), it must make the \(a\) rightward exchanges
at indices \(p,p+1,\ldots,n-1\). Indices larger than \(p\) occur only in the
decreasing segment, before the occurrence of \(p\) in the increasing
segment. This one can therefore make at most one of these exchanges
per pass. If \(k>p\), the same reasoning applies to the \(n-k\)
remaining exchanges, whose indices all lie in the decreasing segment. Hence
\(t_k(w)\ge\min\{a,n-k\}\).

The leftmost zero starts at position \(k+1\) and finishes at
position one. It must move left across \(\min\{k,b\}\) of the
indices \(1,\ldots,b\). These indices occur only in the increasing
segment, so at most one such move is possible per pass. Thus
\(t_k(w)\ge\min\{k,b\}\).
Combining the three lower bounds gives
\[
 \begin{split}
 t_k(w)&\ge
 \max\{\min\{k,n-k\},\min\{a,n-k\},\min\{k,b\}\}\\
 &=\min\{\max\{k,a\},\max\{n-k,b\}\}.
 \end{split}
\]
For the last identity, distributing the minimum on the second line
adds only the term \(\min\{a,b\}\) to the maximum on the first.
That term is redundant: if \(k\ge\min\{a,b\}\), it is bounded by
\(\min\{k,b\}\); otherwise \(a+b\le n\) gives
\(n-k>\min\{a,b\}\), and it is bounded by \(\min\{a,n-k\}\).
Since \(e_k(U(w)^T\Delta U(w))=t_k(w)\), this lower bound matches
\eqref{eq:11} at every order.
\end{proof}

\begin{corollary}
\label{cor:8}
Fix \((p,q)\in\mathcal P_n\). Among symmetric oscillatory matrices
of order \(n\) with bandwidth pair \((p,q)\), the maximum of
\(\sum_{k=1}^{n-1}e_k(A)\) is \(\binom n2\) when \(p+q\le n\).
When \(p+q\ge n\), put
\[
 h=\max\left\{\left\lfloor\frac n2\right\rfloor,
                        n-\min\{p,q\}\right\}.
\]
Then
\begin{equation}
\label{eq:12}
 \begin{aligned}
 \max_A e(A)&=h,\\
 \max_A\sum_{k=1}^{n-1}e_k(A)
 &=h(n-h)+\binom{n-p}{2}+\binom{n-q}{2}.
 \end{aligned}
\end{equation}
These two maxima are attained simultaneously by the construction in
Theorem~\ref{thm:13}. Every maximum sum has an integer realization
of determinant one. At \(p+q=n\), both sum formulas agree.
\end{corollary}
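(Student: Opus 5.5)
The plan splits into the case \(p+q\le n\) and the case \(p+q\ge n\). In each case we take the upper bound from Theorem~\ref{thm:12} or Theorem~\ref{thm:13}, and the matching construction from Corollary~\ref{cor:2} or Theorem~\ref{thm:13}.

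\emph{Case \(p+q\le n\).} Theorem~\ref{thm:12} gives \(\sum e_k(A)\le\binom n2\) for every symmetric oscillatory matrix. If \(p,q\ge2\), Corollary~\ref{cor:2} supplies a symmetric basic matrix with bandwidth pair \((p,q)\). If \(p=1\) or \(q=1\), admissibility forces \((p,q)\in\{(1,n-1),(n-1,1)\}\), and the constant direction strings again give basic matrices. Theorem~\ref{thm:12} then says every basic matrix has sum exactly \(\binom n2\), and taking all parameters equal to one gives an integer realization of determinant one. When \(p+q=n\), the second formula gives the same value: \(h=\max\{\lfloor n/2\rfloor,\max\{p,q\}\}=\max\{p,q\}\), and with \(p\le q\) (so \(n-p=q\), \(n-q=p\)) we get \(q\cdot p+\binom q2+\binom p2=\binom{p+q}2\).

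\emph{Case \(p+q\ge n\).} The bound \eqref{eq:11} holds for every \(A\) with this pair, and Theorem~\ref{thm:13} shows that the matrix built from \(w\) in \eqref{eq:4} attains it at every \(k\). Hence both maxima equal the corresponding expressions built from \(g_k=\min\{\max\{k,a\},\max\{n-k,b\}\}\), where \(a=n-p\), \(b=n-q\) and \(a+b\le n\). The rest is computation.

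\emph{Exponent.} We claim \(\max_k g_k=h\), and note that \(n-\min\{p,q\}=\max\{a,b\}\). Take \(a\ge b\); the other case follows by the symmetry \(k\mapsto n-k\). For \(k\le a\) we have \(g_k=\min\{a,\max\{n-k,b\}\}\). This equals \(a\) whenever \(n-k\ge a\), i.e. \(k\le n-a\). Since \(a+b\le n\), the range \(k\le\min\{a,n-a\}\) is nonempty, so \(g_k=a\) somewhere when \(a\le n/2\) or when \(a\ge n/2\). Separately, \(\min\{k,n-k\}\) contributes \(\lfloor n/2\rfloor\) near the center. Finally, each \(g_k\) is at most \(\max\{\lfloor n/2\rfloor,a\}\) (by cases on \(k\)).

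\emph{Sum.} The approach is to compare \(g_k\) with the baseline \(\min\{k,n-k\}\), whose sum is \(\lfloor n^2/4\rfloor\). The excess comes from two ranges:
\begin{itemize}
\item for \(k<a\), where \(\max\{k,a\}=a\), the left end is lifted up to \(a\);
\item for \(n-k<b\), the right end is lifted symmetrically.
\end{itemize}
These lifts overlap when \(a\) or \(b\) exceeds \(n/2\), which is exactly where \(h\) switches from \(\lfloor n/2\rfloor\) to \(\max\{a,b\}\).

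Rather than summing piecewise, it is cleaner to view \(g_k\) as a min of two increasing/decreasing ramps. Take \(a\ge b\). Then \(g_k=\max\{a,k\}\) is capped by \(n-k\) up to the crossing point, and is \(n-k\) or \(b\) afterward. Summing directly gives
\[
\sum_{k=1}^{h}\max\{k,a\}\ \text{(suitably capped)}+\sum_{k>h}\max\{n-k,b\}.
\]
This should reduce to \(h(n-h)+\binom a2+\binom b2\), using \(\sum_{k<a}(a-k)=\binom a2\) and \(\sum_{k=1}^{h}k+\sum_{j=1}^{n-h-1}j=h(n-h)-\dots\) bookkeeping. I expect this bookkeeping, handling the two regimes of \(h\) and parity of \(n\) at once, to be the main obstacle. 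I would first verify it for \(a\le\lfloor n/2\rfloor\), where the lifts are disjoint, and then treat \(a>n/2\), where \(g_k=a\) on the whole plateau \(n-a\le k\le a\).

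Simultaneous attainment and integrality both follow from the single construction in Theorem~\ref{thm:13}, taken with all parameters and diagonal entries equal to one.
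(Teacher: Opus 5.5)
Your treatment of the case \(p+q\le n\) is correct and follows the paper. So are the reduction of the case \(p+q\ge n\) to the profile \(g_k=\min\{\max\{k,a\},\max\{n-k,b\}\}\) via \eqref{eq:11} and Theorem~\ref{thm:13}, and the computation \(\max_k g_k=h\). Your direct algebraic check at \(p+q=n\) is a valid substitute for the paper's remark that the construction is then basic. (Minor point: the range \(1\le k\le\min\{a,n-a\}\) is nonempty because \(1\le a\le n-1\), not because \(a+b\le n\); indeed \(g_1=a\).)

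The gap is the sum formula, which is the real content of the corollary beyond Theorem~\ref{thm:13}. You only assert that the sum ``should reduce to'' \(h(n-h)+\binom a2+\binom b2\), and the shape you propose to sum when \(a>n/2\) is wrong. Take \(a\ge b\) and \(a>n/2\). Your own exponent argument gives \(g_k=a\) exactly for \(1\le k\le n-a\). For \(n-a\le k\le a\) one has \(\max\{k,a\}=a\) and \(\max\{n-k,b\}=n-k\le a\), so \(g_k=n-k\), not \(a\). For example, \(n=5\) and \((p,q)=(2,4)\) give \(a=3\), \(b=1\), \(h=3\) and profile \((3,3,2,1)\), whose sum \(9\) matches the formula. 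Your plateau would instead give \((3,3,3,1)\) and sum \(10\). As written, the proposal does not establish the second line of \eqref{eq:12}.

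The paper avoids all regime and parity splits by counting level sets. For \(0\le r<h\), the inequality \(g_k\le r\) holds exactly when either \(k\le r\) and \(a\le r\), or \(k\ge n-r\) and \(b\le r\). Each of these sets has \(r\) elements when its condition on \(a\) or \(b\) holds. They are disjoint, because both conditions together with \(r<h\) force \(r<\lfloor n/2\rfloor\). Since \(g_k\le h\),
\[
\sum_{k=1}^{n-1}g_k=\sum_{r=0}^{h-1}\#\{k:g_k>r\}
=h(n-1)-\sum_{r=a}^{h-1}r-\sum_{r=b}^{h-1}r
=h(n-h)+\binom a2+\binom b2 .
\]
Your piecewise route also works once the shape is corrected. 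For \(a\ge b\), one has \(g_k=\min\{\max\{k,a\},n-k\}\) for \(k\le h\) and \(g_k=\max\{n-k,b\}\) for \(k>h\). The right part contributes \(\binom{n-h}2+\binom b2\). The left part contributes \(\binom{h+1}2+\binom a2\) when \(h=\lfloor n/2\rfloor\), and \(a(n-a)+\binom a2-\binom{n-a}2\) when \(h=a\).
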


\begin{proof}
When \(p+q\le n\), Corollary~\ref{cor:2} supplies a basic realization
with sum \(\binom n2\), and Theorem~\ref{thm:12} gives the upper bound.
Now suppose \(p+q\ge n\), and take the simultaneous extremal
realization \(A\) from Theorem~\ref{thm:13}.
Put \(a=n-p\) and \(b=n-q\). Its profile is
\[
 e_k(A)=\min\{\max\{k,a\},\max\{n-k,b\}\}.
\]
On the left half this is at most
\(\max\{\lfloor n/2\rfloor,a\}\), and on the right half it is at most
\(\max\{\lfloor n/2\rfloor,b\}\).
The endpoint values are \(a,b\), and the value at
\(k=\lfloor n/2\rfloor\) is at least \(\lfloor n/2\rfloor\).
Thus \(e(A)=h\).

For an integer \(0\le r<h\), the condition \(e_k(A)\le r\) means
that \(k\le r\) and \(a\le r\), or that \(n-k\le r\) and \(b\le r\).
Each of these sets, when nonempty, has size \(r\).
The two sets are disjoint: if both occur, then \(r\ge a,b\),
and \(r<h\) forces \(r<\lfloor n/2\rfloor\).
Counting the exponents by their integer levels gives
\[
 \begin{split}
 \sum_{k=1}^{n-1}e_k(A)
 &=h(n-1)-\sum_{r=a}^{h-1}r-\sum_{r=b}^{h-1}r\\
 &=h(n-h)+\binom a2+\binom b2.
 \end{split}
\]
This proves \eqref{eq:12}. At \(p+q=n\), the construction is basic,
so the two sum formulas coincide. All constructions permit parameters
and diagonal entries one.
\end{proof}

\begin{example}
\label{ex:1}
For \(n=5\) and \(p=q=3\), the sequence \((4,3,1,2,3)\) gives
\[
A=\begin{pmatrix}
1&1&1&1&0\\
1&2&2&2&0\\
1&2&3&4&0\\
1&2&4&7&1\\
0&0&0&1&2
\end{pmatrix},\qquad
A^{-1}=\begin{pmatrix}
2&-1&0&0&0\\
-1&4&-5&2&-1\\
0&-5&9&-4&2\\
0&2&-4&2&-1\\
0&-1&2&-1&1
\end{pmatrix}.
\]
Both half-bandwidths equal three.  No basic realization exists in this
order; the five upper factors in this construction are the minimum.
Its profile is \((2,2,2,2)\), which is the largest at this bandwidth
pair by Theorem~\ref{thm:13}. The maximum exponent sum is eight.
\end{example}

For \(p,q\ge1\), set \(n=\max\{p,q\}+1\) and take
\[
w_{p,q}=(q,q-1,\ldots,1,2,\ldots,p).
\]
These least-order realizations use a different factor order from
\eqref{eq:4}, but also attain the largest profile at their bandwidth pair.

\begin{proposition}
\label{thm:7}
Let \(A=U(w_{p,q})^T\Delta U(w_{p,q})\), with positive parameters and
positive diagonal \(\Delta\).  Its bandwidth pair is \((p,q)\), its
order is the least possible for that pair, and its \(p+q-1\) upper
factors are the minimum in this order. For \(1\le k<n\),
\[
 e_k(A)=\min\{\max\{k,n-p\},\max\{n-k,n-q\}\}.
\]
Thus this profile is the largest at the given order and bandwidth pair.
Moreover,
\[
e(A)=\max\left\{\left\lfloor\frac n2\right\rfloor,
                         n-\min\{p,q\}\right\}.
\]
\end{proposition}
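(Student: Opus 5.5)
The plan is to reduce every assertion to Lemma~\ref{lem:5}, Lemma~\ref{lem:6} and Theorem~\ref{thm:13}, with $n=\max\{p,q\}+1$ and $w=w_{p,q}=(q,q-1,\ldots,1,2,\ldots,p)$ of length $p+q-1$.

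\emph{Bandwidths.} Every index $1,\ldots,n-1$ occurs, since $n-1=\max\{p,q\}$. An increasing consecutive subsequence uses at most one index from the decreasing segment $(q,\ldots,1)$. Its terminal value in $(2,\ldots,p)$ is therefore at most $p$, so its length is at most $p$. The subsequence $1,2,\ldots,p$ attains this, so $\ell_\uparrow(w)=p$. Symmetrically $\ell_\downarrow(w)=q$, witnessed by $q,\ldots,1$. Lemma~\ref{lem:5} then gives the bandwidth pair $(p,q)$ and oscillation.

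\emph{Minimality.} Theorem~\ref{thm:6} requires $p,q\le n-1$, so the order $n$ is the least possible. Corollary~\ref{cor:3} gives the lower bound $\max\{n-1,p+q-1\}=p+q-1$ on the factor count, because $p+q-1\ge\max\{p,q\}=n-1$. The list $w$ has exactly this length, so its factor count is the minimum.

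\emph{Profile.} Since $p+q\ge n$, the upper bound \eqref{eq:11} holds by Theorem~\ref{thm:13}. By symmetry and Lemma~\ref{lem:6}, $e_k(A)=t_k(w)$, so it suffices to prove the matching lower bound on $t_k(w)$. Put $a=n-p$ and $b=n-q$. I will copy the three lower bounds from the proof of Theorem~\ref{thm:13}.

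\begin{itemize}
\item \emph{First bound.} The list $w$ is a subsequence of a full decreasing pass followed by a full increasing pass. Lemma~\ref{lem:6} (deleting operations cannot speed completion) gives $t_k(w)\ge\min\{k,n-k\}$.
\item \emph{Second bound.} The rightmost one must eventually use the indices $\max\{k,p\},\ldots,n-1$, and there are $\min\{a,n-k\}$ of them. Each index $>p$ appears only in the decreasing segment, and before every later index in the list. So the rightmost one makes at most one such exchange per pass, and $t_k(w)\ge\min\{a,n-k\}$.
\item \emph{Third bound.} The leftmost zero must move left across $\min\{k,b\}$ of the indices $1,\ldots,b$. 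Here $b=n-q\le 1$ when $q=n-1$; otherwise $q<n-1$, so $p=n-1$ and $b=n-q$. In that case the indices $1,\ldots,b-1$ must each occur once in a position permitting only one move per pass. The indices $1,\ldots,q$ appear in the decreasing segment, and the indices $2,\ldots,p$ in the increasing one. A leftward move of the zero across index $i$ needs index $i$ before index $i-1$ in a pass, and in $w$ this happens only when both lie in the decreasing segment, i.e.\ for $i\le q$. The critical indices $i\le b=n-q$ are $>q$ or else appear only increasingly, so at most one such move occurs per pass.
\end{itemize}

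Combining the three bounds by the same min–max identity as in Theorem~\ref{thm:13}, which uses $a+b\le n$, yields equality in \eqref{eq:11}. Maximality of the profile at this order and bandwidth pair is then immediate from \eqref{eq:11}. The formula for $e(A)$ follows from the first part of the proof of Corollary~\ref{cor:8}, which uses only the displayed profile.

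\emph{Expected obstacle.} The main difficulty is the third bound, for two reasons. First, $w_{p,q}$ reverses the roles of the segments relative to \eqref{eq:4}. Second, the two segments overlap on the indices $2,\ldots,\min\{p,q\}$. I would first try to avoid the issue by symmetry. When $p=n-1$, the bound $\max\{n-k,b\}$ should follow from the second argument applied to $A^\vee$, using Proposition~\ref{thm:3}: $A^\vee$ corresponds to the reversed list $(p,\ldots,2,1,\ldots,q)$, which has the shape \eqref{eq:4} up to relabeling. Failing that, I would compute the passes directly in the two cases $p=n-1$ and $q=n-1$.
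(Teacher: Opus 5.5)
\textbf{The third lower bound in your main line is wrong, but your fallback repairs it.}

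Your bandwidth, minimality, first-bound and second-bound arguments are fine. So is the final deduction of maximality and of $e(A)$. Note that when $q=n-1$ the list $w_{p,q}=(n-1,\ldots,2,1,2,\ldots,p)$ is literally \eqref{eq:4}, so Theorem~\ref{thm:13} applies verbatim in that case.

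The real work is the case $p=n-1>q$, and there your third bullet fails. The indices $1,\ldots,\min\{q,b\}$ all occur in the decreasing segment $(q,\ldots,1)$. So the leftmost zero can cross several of them in one pass, and your sentence ``the critical indices $i\le b$ are $>q$ or else appear only increasingly'' is false once $q\ge2$.

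Concretely, take $n=6$, $(p,q)=(5,2)$, $w=(2,1,2,3,4,5)$ and $k=4$. The passes give $111100\to111001\to110011\to010111\to001111$. In the third pass the leftmost zero crosses indices $2$ and $1$ and reaches position one. Tracking that zero therefore cannot give $\min\{k,b\}=4$. Yet this bound is indispensable here, since your first two bounds give only $2$ and $1$.

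The fallback you sketch closes the gap. For $p=n-1$, the reversed list $w_{q,p}=(n-1,\ldots,2,1,2,\ldots,q)$ is exactly \eqref{eq:4} for the pair $(q,p)$. Also $A^\vee=V\Delta^{-1}V^T$ with $V=DU(w_{p,q})^{-1}D=U(w_{q,p})$, parameters reversed. By Lemma~\ref{lem:6}, both corner times of $A^\vee$ at order $k$ equal $t_k(w_{q,p})$. Theorem~\ref{thm:13} evaluates this as $\min\{\max\{k,n-q\},\max\{n-k,n-p\}\}$, and $e_k(A)=e_{n-k}(A^\vee)$ then gives the formula. What this supplies in your bookkeeping is the second bound for $w_{q,p}$ at order $n-k$, namely $\min\{n-q,k\}=\min\{k,b\}$, rather than ``$\max\{n-k,b\}$''.

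You can also avoid duality and track the rightmost zero instead. It must cross the $\min\{k,b\}$ indices $\max\{q,n-k\},\ldots,n-1$. Each index $i>q$ occurs only in the increasing segment, after every occurrence of $i-1$, so at most one of these crossings happens per pass.

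With either repair, your proof reduces the whole proposition to Theorem~\ref{thm:13} plus checkerboard duality. The paper instead counts the passes of $w_{p,q}$ explicitly for $p\ge q$ and uses the same duality for $q>p$.
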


\begin{proof}
The longest increasing and decreasing consecutive subsequences have
lengths \(p,q\), respectively, so
Lemma~\ref{lem:5} gives the bandwidths.  Any
matrix with these bandwidths has order at least \(\max\{p,q\}+1\).
The factor count follows from Corollary~\ref{cor:3}.

Assume \(p\ge q\), so \(n=p+1\).  In the notation of
Lemma~\ref{lem:6}, it suffices to count passes of
\(q,q-1,\ldots,1,2,\ldots,p\) on \(1^k0^{n-k}\).
Repeating the operation at index one immediately has no further effect:
after \(10\) has become \(01\), the same operation does nothing.
Thus one pass may be split into the decreasing list \(q,\ldots,1\)
followed by the increasing list \(1,\ldots,p\).

If \(k\le q\), each pass moves a zero to the left end of the remaining
interval and a one to its right end.  After \(t\) passes the string is
\[
0^t1^{k-t}0^{n-k-t}1^t
\]
until one central segment disappears.  The time is \(\min\{k,n-k\}\).
If \(k>q\), the first \(k-q\) decreasing segments do nothing, while the
increasing segments leave
\(1^q0^{n-k}1^{k-q}\).  Subsequent passes place a zero on the left and
a one on the right.  The total time is
\[
(k-q)+\min\{q,n-k\}=\min\{k,n-q\}.
\]
Together these give \(\min\{k,n-\min\{k,q\}\}\).
Since \(n=p+1\), this is the asserted symmetric expression.

For the other case, put \(V=DU(w_{p,q})^{-1}D\).  Its factor sequence is
the reversal \(w_{q,p}\), and \(A^\vee=V\Delta^{-1}V^T\).
Lemma~\ref{lem:6} applies to this order of the
factors as well: both corner times use the upper sequence of \(V\).
Apply the preceding pass count with \(p,q\) interchanged and then use
\(e_k(A)=e_{n-k}(A^\vee)\). This gives the same symmetric expression.
The maximality assertions and the formula for \(e(A)\) now follow
from Theorem~\ref{thm:13} and Corollary~\ref{cor:8}.
\end{proof}

For \(p,q\ge2\), the least orders in the symmetric basic and unrestricted
symmetric classes are therefore \(p+q\) and \(\max\{p,q\}+1\).

\section{Banded matrices and complementary minors}
\label{sec:5}

For an \((p,q)\)-banded matrix \(B\) and equally sized index sets
\(I=\{i_1<\cdots<i_r\}\), \(J=\{j_1<\cdots<j_r\}\), call the minor
\emph{structurally trivial} if some paired position lies outside the band:
\[
j_\alpha-i_\alpha>q\quad\text{or}\quad i_\alpha-j_\alpha>p.
\]
Such a minor vanishes.  In the first case the first \(\alpha\) selected
rows can meet only the first \(\alpha-1\) selected columns in a nonzero
permutation term.  In the second case the last \(r-\alpha+1\) selected
rows can meet only the last \(r-\alpha\) selected columns.  Both are
impossible.

\begin{theorem}
\label{thm:8}
Let \(A\) be oscillatory and \(1\le p,q<n\).  If \(A\) is
\((p,q)\)-banded, then, for \(1\le k<n\),
\[
\tau_k^{\mathrm{UR}}(A)\ge\left\lceil\frac{n-k}{q}\right\rceil,
\qquad
\tau_k^{\mathrm{LL}}(A)\ge\left\lceil\frac{n-k}{p}\right\rceil,
\]
and
\[
e_k(A)\ge\left\lceil\frac{n-k}{\min\{p,q\}}\right\rceil.
\]
If \(A^{-1}\) is \((p,q)\)-banded, then
\[
e_k(A)\ge\left\lceil\frac{k}{\min\{p,q\}}\right\rceil.
\]
For every pair \(p,q\), one oscillatory matrix attains both corner bounds
and the first exponent bound simultaneously at all minor orders.  Its
checkerboard inverse attains the inverse exponent bound simultaneously
at all minor orders.
\end{theorem}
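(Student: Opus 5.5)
Powers of a \((p,q)\)-banded matrix stay banded, and a minor whose corner needs a long shift is structurally trivial; the inverse statement then comes from the checkerboard duality. For \(m\ge1\), the power \(A^m\) is \((mp,mq)\)-banded, since \((A^m)_{ij}\) is a sum over walks and each step changes the index by at most \(p\) downward and \(q\) upward. Consider the upper-right corner minor \(\Delta_{L_k,R_k}(A^m)\). The paired positions are \((\alpha,n-k+\alpha)\), with column-minus-row difference \(n-k\) for every \(\alpha\). If \(mq<n-k\), this minor is structurally trivial, so it vanishes. Hence \(\tau_k^{\mathrm{UR}}(A)\ge\lceil(n-k)/q\rceil\). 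The lower-left bound with \(p\) is identical. Corollary~\ref{thm:2} gives \(e_k(A)=\max\{\tau_k^{\mathrm{UR}},\tau_k^{\mathrm{LL}}\}\), and the larger of the two bounds is \(\lceil(n-k)/\min\{p,q\}\rceil\).

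For the inverse statement, suppose \(A^{-1}\) is \((p,q)\)-banded. Conjugation by \(D\) preserves support, so \(A^\vee=DA^{-1}D\) is \((p,q)\)-banded and oscillatory by Proposition~\ref{thm:3}. The first part applied at order \(n-k\) gives \(e_{n-k}(A^\vee)\ge\lceil k/\min\{p,q\}\rceil\). Proposition~\ref{thm:3} identifies \(e_{n-k}(A^\vee)\) with \(e_k(A)\).

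For sharpness, take the matrix of Lemma~\ref{lem:5} with
\[
w_+=(1,\ldots,n-1)^{\lceil (n-1)/q\rceil\text{-fold, chopped into runs}}.
\]
More concretely, I would take \(w_+\) to be the concatenation of the consecutive blocks \((1,\ldots,q),(q,\ldots,2q-1),\ldots\), truncated at \(n-1\). Blocks of length \(q\) overlap in one index, so \(\ell_\uparrow(w_+)=q\) exactly. Build \(w_-\) analogously with \(p\), and set \(A=U(w_-)^TU(w_+)\). By Lemma~\ref{lem:6}, \(\tau_k^{\mathrm{UR}}(A)=t_k(w_+)\). Then I would show \(t_k(w_+)\le\lceil(n-k)/q\rceil\). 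In one pass, block \(j\) moves the rightmost one of the string \(1^k0^{n-k}\) through \(q\) positions. More generally, each one in the staircase front advances by up to \(q\) positions per pass, with the ones behind it following in the same pass because the block indices increase. So after \(m\) passes all ones have shifted right by \(\min\{mq,n-k\}\). This pass count is the main obstacle. I would verify it by the \(s_{p,b}\)-type recursion of Theorem~\ref{thm:4}, or more simply by checking that a single increasing block \((a,a+1,\ldots,a+q-1)\) applied to a string whose ones form a contiguous block ending at \(a\) pushes the whole block right by \(q\). Because the block is contiguous, applying the increasing indices moves each one in turn; this needs care about the order of ones and is where I expect the work. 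The matching lower bounds come from the first part, so equality holds at every order, and the same argument gives \(\tau_k^{\mathrm{LL}}\) with \(p\). The exponent then equals \(\lceil(n-k)/\min\{p,q\}\rceil\) by Corollary~\ref{thm:2}.

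The checkerboard inverse \(A^\vee\) of this matrix has \((p,q)\)-banded inverse \(DAD\). Proposition~\ref{thm:3} gives \(e_k(A^\vee)=e_{n-k}(A)=\lceil k/\min\{p,q\}\rceil\), which attains the inverse bound at all orders. Setting all parameters to one keeps everything integral, although the statement does not require this.
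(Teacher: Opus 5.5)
Your corner bounds, first exponent bound and inverse bound follow the paper's argument. The power $A^m$ is $(mp,mq)$-banded, so the corner minors are structurally trivial below the thresholds, and the inverse case passes to $A^\vee$ through Proposition~\ref{thm:3}.

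\textbf{The sharpness construction fails.} Concatenating $(1,\ldots,q),(q,\ldots,2q-1),\ldots$ gives a nondecreasing word that contains every index. Hence $(1,2,\ldots,n-1)$ occurs as a subsequence. The selected positions need not be contiguous, so the one-index overlap does not stop a chain from running across blocks. Thus $\ell_\uparrow(w_+)=n-1$, and Lemma~\ref{lem:5} gives $\operatorname{bw}_+A=n-1$ rather than $q$ whenever $q<n-1$.

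The pass count also fails, for exactly the reason you flagged. In an increasing run, index $i-1$ is used before cell $i$ has been vacated. So each pass carries only the leading remaining one to the right block, and $t_k(w_+)=k$ for every $k$. For instance, $t_{n-1}(w_+)=n-1$, whereas the target is $\lceil 1/q\rceil=1$. Concretely, for $n=4$, $q=2$ and $w_+=(1,2,2,3)$, you get $t_1=1<\lceil 3/2\rceil$ and $t_2=2>\lceil 2/2\rceil$.

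\textbf{Repair: reverse the direction of the sweeps.} Let $w_+$ consist of $q$ consecutive copies of $(n-1,n-2,\ldots,1)$, and let $w_-$ consist of $p$ copies. With unit parameters, $U(w_-)^TU(w_+)=(I+N^T)^p(I+N)^q$, which is the paper's matrix.
\begin{itemize}
\item Bandwidth: within one copy, index $i+1$ precedes $i$. So each step of an increasing consecutive chain must pass to a later copy, giving $\ell_\uparrow(w_+)=q$ and $\ell_\uparrow(w_-)=p$.
\item Pass count: a decreasing sweep takes $0^a1^k0^b$ with $b\ge1$ to $0^{a+1}1^k0^{b-1}$. The rightmost one moves first, and each one behind it then finds a vacated cell. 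Hence $t_k(w_+)=\lceil(n-k)/q\rceil$ and $t_k(w_-)=\lceil(n-k)/p\rceil$.
\end{itemize}
Lemma~\ref{lem:6} and Corollary~\ref{thm:2} then give equality at every order, and your final paragraph on $A^\vee$ goes through unchanged.

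\textbf{Comparison with the paper.} The paper certifies the same matrix as banded totally positive, using the cited product theorem. It then reads the corner times off the support formula \eqref{eq:5} of Proposition~\ref{cor:5}. That route describes every positive minor of every power. Your corrected pass count needs only Lemmas~\ref{lem:5} and~\ref{lem:6}, and it avoids the external product theorem.
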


\begin{proof}
The matrix \(A^m\) is \((mp,mq)\)-banded.  Its upper-right corner
minor of order \(k\) is structurally trivial when \(mq<n-k\), and its
lower-left corner minor is trivial when \(mp<n-k\).  This proves the
corner bounds and hence the first exponent bound.  Apply that bound to
\(A^\vee\) at order \(n-k\) to obtain the inverse bound.  The equality
cases follow from the next proposition and its checkerboard inverse.
\end{proof}

An \((p,q)\)-banded matrix is \emph{banded totally positive} if every
structurally nontrivial minor is positive
\cite{BranquinhoFoulquieManas}.  It is TN and nonsingular, although it
may have structural zero minors.  The support statement in the next
proposition also follows by iterating the product theorem of
\cite[Theorem~2.18]{BranquinhoFoulquieManas}: the product of matrices
banded totally positive with bandwidths \((p_1,q_1)\), \((p_2,q_2)\) is
banded totally positive with bandwidths
\((\min\{n-1,p_1+p_2\},\min\{n-1,q_1+q_2\})\).
We give a direct proof of the support formula needed here.

\begin{proposition}
\label{cor:5}
Let \(B\) be banded totally positive with lower and upper bandwidths
\(p,q\ge1\).  Then \(B\) is oscillatory, and for every \(m\ge1\),
\begin{equation}
\label{eq:5}
\Delta_{I,J}(B^m)>0
\quad\Longleftrightarrow\quad
-mp\le j_\alpha-i_\alpha\le mq\quad(1\le\alpha\le r).
\end{equation}
In particular, for \(1\le k<n\),
\[
\tau_k^{\mathrm{UR}}(B)=\left\lceil\frac{n-k}{q}\right\rceil,
\quad
\tau_k^{\mathrm{LL}}(B)=\left\lceil\frac{n-k}{p}\right\rceil,
\quad
e_k(B)=\left\lceil\frac{n-k}{\min\{p,q\}}\right\rceil.
\]
The exponent of \(B\) is \(e(B)=\lceil(n-1)/\min\{p,q\}\rceil\).
\end{proposition}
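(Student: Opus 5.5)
The plan is to prove the support formula \eqref{eq:5} by induction on \(m\), using the Cauchy--Binet positivity observation from Section~\ref{sec:2}; everything else in the statement then follows from it.

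\textbf{Consequences of \eqref{eq:5}.} Since \(p,q\ge1\), every minor of \(B\) on the first off-diagonals is structurally nontrivial, so \(b_{i,i+1},b_{i+1,i}>0\). The principal minor \(\det B\) is nontrivial, so \(B\) is nonsingular, and \(B\) is TN. Theorem~\ref{thm:1} then makes \(B\) oscillatory.

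For the corner times, take \(I=L_k\) and \(J=R_k\). Every paired difference equals \(n-k\), so by \eqref{eq:5} the upper-right corner minor of \(B^m\) is positive exactly when \(mq\ge n-k\). Symmetrically, the lower-left corner minor is positive exactly when \(mp\ge n-k\). Corollary~\ref{thm:2} gives \(e_k(B)\) as the maximum of the two times, which is \(\lceil (n-k)/\min\{p,q\}\rceil\). Maximizing over \(k\) attains the value at \(k=1\), giving \(e(B)\).

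\textbf{Proof of \eqref{eq:5}.} The necessity direction holds for all \(m\): \(B^m\) is \((mp,mq)\)-banded, and structurally trivial minors vanish, as shown before Theorem~\ref{thm:8}. The case \(m=1\) of sufficiency is the definition of banded total positivity. For the induction step, write \(B^{m+1}=B^m B\) and apply Cauchy--Binet. Given \(I,J\) with \(-(m+1)p\le j_\alpha-i_\alpha\le(m+1)q\) for all \(\alpha\), it suffices to construct a set \(K=\{k_1<\cdots<k_r\}\) such that:
\begin{itemize}
\item \(-mp\le k_\alpha-i_\alpha\le mq\) for every \(\alpha\), and
\item \(-p\le j_\alpha-k_\alpha\le q\) for every \(\alpha\).
\end{itemize}
Then both factor minors are positive, by the induction hypothesis and the base case respectively.

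\textbf{Constructing \(K\).} Pointwise, the admissible interval for \(k_\alpha\) is
\[
[\max(i_\alpha-mp,\ j_\alpha-q),\ \min(i_\alpha+mq,\ j_\alpha+p)]\cap[1,n].
\]
Each such interval is nonempty, because of the hypothesis on \(j_\alpha-i_\alpha\) together with the facts that \(i_\alpha\) and \(j_\alpha\) already lie in \([1,n]\). Both endpoints are nondecreasing in \(\alpha\) and increase by at least one from \(\alpha\) to \(\alpha+1\), since \(i\) and \(j\) are strictly increasing. So a greedy choice works: take \(k_\alpha\) to be the larger of the lower endpoint and \(k_{\alpha-1}+1\). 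Strictly increasing endpoint sequences ensure this choice never exceeds the upper endpoint.

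The main obstacle is exactly this step: checking that the greedy choice stays below the upper endpoint, including the truncation to \([1,n]\). I would handle it by showing inductively that \(k_\alpha\) equals either the lower endpoint \(\ell_\alpha\) or \(k_{\alpha-1}+1\le u_{\alpha-1}+1\le u_\alpha\), where \(u_\alpha\) denotes the upper endpoint. The truncated lower endpoint \(\max(\ell_\alpha,1)\) also strictly increases once \(\ell_\alpha\) exceeds \(1\), and otherwise \(k_\alpha=\alpha\le i_\alpha\) keeps the choice within bounds. Alternatively, I could cite the product theorem \cite[Theorem~2.18]{BranquinhoFoulquieManas}, iterated, as a fallback.
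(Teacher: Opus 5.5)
Your proposal is correct and is essentially the paper's argument. Oscillation comes from the oscillatory criterion, and necessity comes from bandedness; the paper instead sums one-step displacements along a Cauchy--Binet chain, which is equivalent. Sufficiency comes from a positive Cauchy--Binet term along a chain of $r$-subsets whose ordered coordinates move by amounts in $[-p,q]$ per factor, and the corner times are read off from $L_k,R_k$.

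The only real difference is how the chain is built. You build it one step at a time by induction on $m$. Your greedy choice is just $k_\alpha=\max\{\ell_\alpha,\alpha\}$, which lies in the truncated interval because $\ell_\alpha\le\min\{u_\alpha,n\}$ by nonemptiness and $\alpha\le\min\{i_\alpha,j_\alpha\}$. The paper instead writes the whole chain at once as $x_\alpha(t)=\operatorname{median}(i_\alpha-tp,\,j_\alpha,\,i_\alpha+tq)$, which stays between $i_\alpha$ and $j_\alpha$ and so needs no truncation check.
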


\begin{proof}
Nonsingularity and positivity of the first off-diagonals give oscillation.
If \(\Delta_{I,J}(B^m)>0\), Cauchy--Binet supplies a chain
\(I=X(0),\ldots,X(m)=J\) of \(r\)-subsets whose one-step minors are
positive.  Each step changes every ordered coordinate by an amount in
\([-p,q]\).  Summing proves necessity in \eqref{eq:5}.

Conversely, suppose the displayed coordinate bounds hold.  For
\(0\le t\le m\), put
\[
x_\alpha(t)=\min\{\max\{j_\alpha,i_\alpha-tp\},i_\alpha+tq\}.
\]
This is the median of \(i_\alpha-tp,j_\alpha,i_\alpha+tq\); it lies
between \(i_\alpha\) and \(j_\alpha\), hence in \([n]\).
Each of the three input sequences increases by at least one with
\(\alpha\), and so does its median.  Thus
\(X(t)=\{x_1(t)<\cdots<x_r(t)\}\) is an \(r\)-subset.
Moreover, \(X(0)=I\), \(X(m)=J\), and
\[
-p\le x_\alpha(t+1)-x_\alpha(t)\le q.
\]
Every \(\Delta_{X(t),X(t+1)}(B)\) is therefore positive, so their
product is a positive term in the Cauchy--Binet expansion.

The largest coordinate displacement between two \(k\)-subsets is
\(n-k\), attained by \(L_k,R_k\).  This proves the corner and exponent
formulas directly from \eqref{eq:5}.  The maximum over \(k\)
occurs at \(k=1\).
\end{proof}

For every \(1\le p,q<n\), such a matrix exists.  If
\(N=\sum_{i=1}^{n-1}E_{i,i+1}\), the product
\[
B=(I+N^T)^p(I+N)^q
\]
is banded totally positive by the product theorem above, since its
bidiagonal factors are banded totally positive.  Its extreme band entries
are positive, so the bandwidths are exactly \(p,q\).

\begin{corollary}
\label{cor:6}
If \(B=A^\vee\) is banded totally positive with bandwidths \(p,q\ge1\),
let \(I,J\) be \(k\)-subsets and
write \(I^c=\{i'_1<\cdots<i'_{n-k}\}\) and
\(J^c=\{j'_1<\cdots<j'_{n-k}\}\).  For every integer \(m\ge1\),
\[
\Delta_{I,J}(A^m)>0
\quad\Longleftrightarrow\quad
-mp\le i'_\alpha-j'_\alpha\le mq
\quad(1\le\alpha\le n-k).
\]
Consequently,
\[
e_k(A)=\left\lceil\frac{k}{\min\{p,q\}}\right\rceil\quad(1\le k<n),
\qquad
e(A)=\left\lceil\frac{n-1}{\min\{p,q\}}\right\rceil.
\]
\end{corollary}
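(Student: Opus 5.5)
We will use the Jacobi complementary minor formula to transfer the support statement \eqref{eq:5} from \(B=A^\vee\) to the powers of \(A\). Since \(B=DA^{-1}D\), we have \(B^m=DA^{-m}D=(A^m)^\vee\) for every \(m\ge1\). Jacobi's identity for the inverse of a nonsingular matrix \(M\) gives
\[
\Delta_{J^c,I^c}(M^{-1})=\pm\frac{\Delta_{I,J}(M)}{\det M}.
\]
Apply it with \(M=A^m\). Conjugation by \(D\) multiplies a minor with row set \(J^c\) and column set \(I^c\) by the sign \((-1)^{\sum J^c+\sum I^c}\), and this cancels the Jacobi sign. Hence
\[
\Delta_{I,J}(A^m)=\det(A^m)\,\Delta_{J^c,I^c}(B^m).
\]
Since \(A\) is nonsingular TN, \(\det A^m>0\), and \(\Delta_{I,J}(A^m)>0\) exactly when \(\Delta_{J^c,I^c}(B^m)>0\). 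Proposition~\ref{cor:5} applies to \(B\), which is banded totally positive with bandwidths \(p,q\). By \eqref{eq:5}, with row set \(J^c\) and column set \(I^c\), this positivity is equivalent to
\[
-mp\le i'_\alpha-j'_\alpha\le mq\qquad(1\le\alpha\le n-k).
\]
This is the displayed criterion. The only care needed is the orientation, since rows and columns are swapped by Jacobi. This is why the difference appears as \(i'_\alpha-j'_\alpha\) rather than \(j'_\alpha-i'_\alpha\).

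For the exponent formula, use Lemma~\ref{lem:4} and Corollary~\ref{thm:2}, so that only the two corner minors matter. For \(I=L_k\) and \(J=R_k\) we have \(I^c=R_{n-k}\) and \(J^c=L_{n-k}\). Each \(i'_\alpha-j'_\alpha=k\), so the condition reads \(k\le mq\), and \(\tau_k^{\mathrm{UR}}(A)=\lceil k/q\rceil\). Symmetrically, \(I=R_k\) and \(J=L_k\) give differences \(-k\), so the condition reads \(k\le mp\), and \(\tau_k^{\mathrm{LL}}(A)=\lceil k/p\rceil\). Taking the maximum gives \(e_k(A)=\lceil k/\min\{p,q\}\rceil\).

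Alternatively, the profile follows at once from Proposition~\ref{thm:3}. We have \(e_k(A)=e_{n-k}(B)=\lceil k/\min\{p,q\}\rceil\), using \((A^\vee)^\vee=A\). Finally, \(e(A)=\max_k e_k(A)\) together with \(e_n(A)=1\). The maximum over \(1\le k<n\) is attained at \(k=n-1\), which gives \(\lceil (n-1)/\min\{p,q\}\rceil\).

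The main obstacle is the sign bookkeeping in the Jacobi step. We must confirm that the checkerboard sign exactly cancels the Jacobi sign, so that the identity holds with a positive factor \(\det A^m\). The factor \((-1)^{\sum I+\sum J}\) from Jacobi and the \(D\)-conjugation factor on \((J^c,I^c)\) differ by \((-1)^{2\sum[n]}=1\), so they do cancel.
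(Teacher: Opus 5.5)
Your proof is correct and takes the same route as the paper. Jacobi's identity gives \(\Delta_{I,J}(A^m)=\det(A^m)\,\Delta_{J^c,I^c}(B^m)\), Proposition~\ref{cor:5} is then applied with row set \(J^c\) and column set \(I^c\), and the profile follows by complementary-order duality. Your direct corner computation via Lemma~\ref{lem:4} and Corollary~\ref{thm:2} is an equally valid variant of that last step, and your check that the checkerboard sign cancels the Jacobi sign supplies a detail the paper leaves implicit.
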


\begin{proof}
Jacobi's identity gives
\[
\Delta_{I,J}(A^m)=\det(A^m)\Delta_{J^c,I^c}(B^m).
\]
Apply Proposition~\ref{cor:5}; its row set is \(J^c\)
and its column set is \(I^c\).  The profile formula follows by
complementary-order duality.
\end{proof}

\subsection{Tridiagonal and Green matrices}

A tridiagonal matrix has zero entries when \(|i-j|>1\).  It is
irreducible tridiagonal when all entries on both first off-diagonals are
nonzero.

\begin{lemma}
\label{lem:9}
Every tridiagonal oscillatory matrix is banded totally positive with
bandwidths \((1,1)\).
\end{lemma}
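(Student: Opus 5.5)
The claim is that every tridiagonal oscillatory matrix \(A\) has every structurally nontrivial minor positive, with respect to bandwidths \((1,1)\). I will first record that the structurally trivial minors vanish and that the bandwidths are exactly \((1,1)\). The latter holds by Theorem~\ref{thm:1}, since the first off-diagonals are positive. It then remains to show that a minor \(\Delta_{I,J}(A)\) with \(|i_\alpha-j_\alpha|\le1\) for every \(\alpha\) is positive.

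The plan is to factor such a minor into principal and off-diagonal pieces. Split the paired indices \((i_\alpha,j_\alpha)\) into maximal consecutive blocks \(\alpha\in[s,t]\) on which the selected rows and columns interact. Tridiagonality makes \(A[I,J]\) block lower or upper triangular once we cut wherever \(i_{\alpha+1}\ge j_\alpha+2\) and \(j_{\alpha+1}\ge i_\alpha+2\). More concretely, whenever some \(i_\alpha\ne j_\alpha\), I will look at the entries in row \(i_\alpha\) and examine the standard reduction. Suppose \(j_\alpha=i_\alpha+1\). Then no earlier column \(j_\beta\), \(\beta<\alpha\), can meet row \(i_\alpha\) except possibly \(j_\beta=i_\alpha-1\); the trivial-minor argument above bounds which columns can meet which rows.

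The cleaner route, which I will take, is by induction on the order \(r\) of the minor. I will expand along the first row \(i_1\). If \(j_1=i_1+1\), then column \(j_1\) is the only selected column meeting row \(i_1\) (columns \(i_1-1\) and \(i_1\) are not selected), so
\[
\Delta_{I,J}=a_{i_1,i_1+1}\,\Delta_{I\setminus i_1,J\setminus j_1}.
\]
The smaller minor is still structurally nontrivial, since the pairing shifts consistently. The case \(j_1=i_1-1\) is symmetric by considering column \(j_1\). If \(j_1=i_1\) and \(i_2,j_2\ne i_1+1\), the block splits off with factor \(a_{i_1i_1}>0\). The one remaining case is a run where \(i_\alpha=j_\alpha\) for \(\alpha=s,\dots,t\) and these indices are consecutive. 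That block is a principal minor of \(A\) on a contiguous interval, and it is positive by Lemma~\ref{lem:1}. I will organize the induction so that a general nontrivial minor is a product of off-diagonal entries \(a_{i,i\pm1}>0\) and contiguous principal minors. This is the usual factorization of minors of tridiagonal matrices.

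The main obstacle is verifying that the product decomposition is correct: the selected submatrix must really be block triangular, with diagonal blocks either \(1\times1\) off-diagonal entries or contiguous principal submatrices. This needs care when a diagonal run is adjacent to an off-diagonal pair, for example \(i_\alpha=j_\alpha=c\) followed by \(i_{\alpha+1}=c+1\), \(j_{\alpha+1}=c+2\). Here I will use the monotonicity \(i_{\alpha+1}>i_\alpha\), \(j_{\alpha+1}>j_\alpha\) to show that the entry coupling the two blocks lies on only one side of the block diagonal, so the determinant factors. Once that holds, positivity follows, and together with Theorem~\ref{thm:1} this gives banded total positivity with bandwidths \((1,1)\).
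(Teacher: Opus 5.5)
Your route differs from the paper's. The paper factors the tridiagonal matrix by elimination as $L\Delta U$, with unit bidiagonal $L,U$ having positive off-diagonal entries. It then exhibits a single positive Cauchy--Binet term through the intermediate set $H=\{\min\{i_\alpha,j_\alpha\}\}$, so it proves positivity without evaluating the minor. You instead evaluate the minor exactly, as a product of first off-diagonal entries and principal minors, and then need only Theorem~\ref{thm:1} and Lemma~\ref{lem:1}. This is the classical determinantal identity for Jacobi matrices. It is more elementary and gives an explicit formula, whereas the paper's argument stays inside the bidiagonal/Cauchy--Binet framework it uses throughout.

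As written, though, your induction does not close. After your three reductions along the first row, the leftover case is $i_1=j_1=c$ with $i_2=c+1$ or $j_2=c+1$. This is not only a run of diagonal pairs: for example, rows $(c,c+2)$ with columns $(c,c+1)$, or rows $(c,c+1,c+2)$ with columns $(c,c+1,c+3)$.

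The monotonicity argument you point to does fix this, and it is best applied to every off-diagonal pair at once. If $j_\alpha=i_\alpha+1$, then $j_\gamma-i_\beta\ge2$ whenever $\beta<\alpha\le\gamma$ or $\beta\le\alpha<\gamma$. Hence $A[I,J]$ is block lower triangular with $a_{i_\alpha,i_\alpha+1}$ as an isolated $1\times1$ diagonal block. If $j_\alpha=i_\alpha-1$, the same inequalities hold for $i_\gamma-j_\beta$, and the structure is block upper triangular.

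Cuts of either type multiply determinants. So the minor equals the product of these entries and the principal minors over the maximal runs of $\alpha$ with $i_\alpha=j_\alpha$. Those principal minors are positive by Lemma~\ref{lem:1} whether or not their indices are contiguous, so your contiguity requirement is unnecessary.

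Your aside that an earlier selected column meets row $i_\alpha$ only at $i_\alpha-1$ is also inaccurate: rows $(c,c+1)$ with columns $(c+1,c+2)$ have $j_1=i_2$. That entry lies below the block diagonal, so it does no harm to the argument.
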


\begin{proof}
Its positive leading principal minors give Gaussian elimination without
pivoting in the form \(S=L\Delta U\), where \(L,U\) are unit lower and
upper bidiagonal, \(\Delta\) is positive diagonal, and the first
off-diagonal entries of \(L,U\) are positive.
For a nontrivial minor, put \(h_\alpha=\min\{i_\alpha,j_\alpha\}\).
These indices increase strictly, and both differences
\(i_\alpha-h_\alpha\), \(j_\alpha-h_\alpha\) belong to \(\{0,1\}\).
The bidiagonal minors \(\Delta_{I,H}(L)\) and
\(\Delta_{H,J}(U)\) are positive: a nonzero permutation term cannot
contain crossed matches, so its determinant is the positive product of
the paired entries.  Their product times \(\Delta_{H,H}(\Delta)>0\)
is a positive Cauchy--Binet term for \(\Delta_{I,J}(S)\).
All other terms are nonnegative.  Trivial minors vanish by bandwidth.
\end{proof}

\begin{corollary}
\label{thm:9}
If \(S\) is tridiagonal and oscillatory, then for every integer
\(m\ge1\) and equally sized sets \(I,J\),
\[
\Delta_{I,J}(S^m)>0
\quad\Longleftrightarrow\quad
|i_\alpha-j_\alpha|\le m\quad(1\le\alpha\le r),
\qquad e_k(S)=n-k\quad(1\le k<n).
\]
If \(A\) is oscillatory and \(A^{-1}\) is tridiagonal, then for
every integer \(m\ge1\) and \(k\)-subsets \(I,J\),
\[
\Delta_{I,J}(A^m)>0
\quad\Longleftrightarrow\quad
|i'_\alpha-j'_\alpha|\le m\quad(1\le\alpha\le n-k),
\qquad e_k(A)=k\quad(1\le k<n),
\]
where the primed indices enumerate the complements as above.
Both \(e(S)\) and \(e(A)\) equal \(n-1\).
\end{corollary}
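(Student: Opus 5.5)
The plan is to reduce both halves of the statement to the banded results of Lemma~\ref{lem:9}, Proposition~\ref{cor:5} and Corollary~\ref{cor:6}, taking \(p=q=1\) throughout.

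\emph{The tridiagonal case.} First, Lemma~\ref{lem:9} shows that \(S\) is banded totally positive with bandwidths \((1,1)\). Proposition~\ref{cor:5} then applies with \(p=q=1\). The support criterion \eqref{eq:5} becomes \(-m\le j_\alpha-i_\alpha\le m\), which is exactly \(|i_\alpha-j_\alpha|\le m\). The same proposition also gives
\[
e_k(S)=\lceil (n-k)/1\rceil=n-k
\qquad\text{and}\qquad
e(S)=n-1.
\]

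\emph{The inverse-tridiagonal case.} Suppose \(A\) is oscillatory and \(A^{-1}\) is tridiagonal. I would pass to \(B=A^\vee=DA^{-1}D\). Conjugation by \(D\) preserves the zero pattern, so \(B\) is tridiagonal. By Proposition~\ref{thm:3}, \(B\) is oscillatory. Lemma~\ref{lem:9} then makes \(B\) banded totally positive with bandwidths \((1,1)\).

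Corollary~\ref{cor:6} now applies with \(p=q=1\). It gives the complement criterion \(-m\le i'_\alpha-j'_\alpha\le m\), that is, \(|i'_\alpha-j'_\alpha|\le m\), together with
\[
e_k(A)=\lceil k/1\rceil=k
\qquad\text{and}\qquad
e(A)=n-1.
\]

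\emph{The only point to check.} One detail needs verifying: the hypothesis of Corollary~\ref{cor:6} is stated as ``\(B=A^\vee\) banded totally positive''. We must make sure that tridiagonality of \(A^{-1}\) transfers to \(A^\vee\) and that \(A^\vee\) is oscillatory. Both facts are immediate from the definition of \(A^\vee\) and from Proposition~\ref{thm:3}.

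No step is a real obstacle. If anything is delicate, it is the tridiagonal total-positivity reduction, and that is already handled by Lemma~\ref{lem:9}.
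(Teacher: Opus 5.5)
Your proposal is correct and follows the paper's proof: it applies Lemma~\ref{lem:9} and Proposition~\ref{cor:5} to \(S\), and Corollary~\ref{cor:6} to \(A\), in each case with \(p=q=1\). You also spell out why Lemma~\ref{lem:9} applies to \(A^\vee\). It has the same zero pattern as \(A^{-1}\) and is oscillatory by Proposition~\ref{thm:3}, so it is tridiagonal and oscillatory; the paper leaves this step implicit.
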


\begin{proof}
Apply Lemma~\ref{lem:9} and
Proposition~\ref{cor:5} to \(S\), and apply
Corollary~\ref{cor:6} to \(A\).
\end{proof}

For the classical Green form, let \(p_i,q_i>0\) and
\[
G_{ij}=p_{\min(i,j)}q_{\max(i,j)},\qquad
0<\frac{p_1}{q_1}<\cdots<\frac{p_n}{q_n}.
\]
These matrices and their minors are discussed in
\cite{Pinkus,DelgadoPenaPena,OlshevskyStrangZhlobich}.
Set \(x_i=p_i/q_i\), \(x_0=0\), \(d_i=x_i-x_{i-1}>0\) and
\(D_q=\operatorname{diag}(q_1,\ldots,q_n)\).  With
\(L_{ij}=1\) for \(j\le i\) and zero otherwise,
\[
G=D_qL\operatorname{diag}(d_1,\ldots,d_n)L^TD_q.
\]
Here \(L=(I+E_{n,n-1})\cdots(I+E_{2,1})\) is TN, while its inverse
has diagonal entries one and first subdiagonal entries minus one.  Thus
\(G\) is nonsingular TN with positive first off-diagonals, and
\[
G^{-1}=D_q^{-1}T D_q^{-1},\quad
T_{ii}=\begin{cases}d_i^{-1}+d_{i+1}^{-1},&i<n,\\d_n^{-1},&i=n,
\end{cases}
\quad T_{i,i+1}=T_{i+1,i}=-d_{i+1}^{-1},
\]
with all other off-diagonal entries zero.  Corollary~\ref{thm:9}
therefore gives
\[
e_k(G)=k\quad(1\le k<n),\qquad e_n(G)=1.
\]
For \(n\ge3\), the minor with rows \(1,2\) and columns \(2,3\) is zero,
so \(G\) itself is not TP.

The strict Min matrices \((x_{\min(i,j)})\), with
\(0<x_1<\cdots<x_n\), are obtained by \(p_i=x_i,q_i=1\).
The strict Max matrices \((x_{\max(i,j)})\), with
\(x_1>\cdots>x_n>0\), are simultaneous row and column reversals of
Min matrices.  Such reversal preserves every minor sign and commutes with
matrix powers, so both classes have the same profile \((1,2,\ldots,n-1,1)\).

\section{Support patterns and endpoint exponents}
\label{sec:6}

We finish with entry-support consequences of the classical TN pattern
realization theorem and the corner criterion.

\begin{definition}[Double-echelon pattern]
A zero-nonzero pattern is double echelon, in the sense of
\cite[Section~1.6]{FallatJohnson}, if its nonzero positions occur
consecutively in every nonempty row and column, and the initial and final
support indices are nondecreasing as the row or column index increases.
\end{definition}

For a double-echelon pattern with nonzero diagonal, row \(i\) has support
\([\ell_i,r_i]\), where \(\ell_1\le\cdots\le\ell_n\),
\(r_1\le\cdots\le r_n\), and \(\ell_i\le i\le r_i\).
For example, the pattern
\[
\begin{pmatrix}
+&+&0&0\\
+&+&+&0\\
+&+&+&+\\
0&+&+&+
\end{pmatrix}
\]
has row intervals \([1,2],[1,3],[1,4],[2,4]\).
Its column intervals satisfy the same monotonicity requirement.

We use the diagonal-containing case of the classical TN pattern
realization theorem.

\begin{lemma}[{\cite[Theorem~1.6.3 and Corollary~1.6.5]{FallatJohnson}}]
\label{lem:11}
Let \(\mathcal S\) be a square \((0,+)\)-pattern containing all diagonal
positions.  There is a nonsingular TN matrix whose positive entries are
exactly the \(+\)-positions of \(\mathcal S\) if and only if
\(\mathcal S\) is double echelon.
\end{lemma}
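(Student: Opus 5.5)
The plan is to prove necessity with the $2\times2$ ``shadow'' property of nonsingular TN matrices, and sufficiency by an explicit elementary bidiagonal product whose support is read off from \eqref{eq:1}, exactly as in the proof of Lemma~\ref{lem:5}.

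\emph{Necessity.} Let $A$ be nonsingular TN. By Lemma~\ref{lem:1} its diagonal is positive. First I prove the shadow property: if $a_{ij}=0$ with $i<j$, then $a_{kl}=0$ for all $k\le i$ and $l\ge j$.
\begin{itemize}
\item For $l>j$, the minor with rows $\{i,j\}$ and columns $\{j,l\}$ is $a_{ij}a_{jl}-a_{il}a_{jj}=-a_{il}a_{jj}\ge0$. Hence $a_{il}=0$.
\item For $k<i$, the minor with rows $\{k,i\}$ and columns $\{i,j\}$ is $a_{ki}a_{ij}-a_{kj}a_{ii}=-a_{kj}a_{ii}\ge0$. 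Hence $a_{kj}=0$.
\end{itemize}
Iterating these two steps gives the whole upper-right shadow. The transposed argument handles zeros below the diagonal, giving a lower-left shadow.

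Now take a row $i$. Its support contains $i$, and any zero to the right of the diagonal kills everything further right. Any zero to the left kills everything further left. So the support of row $i$ is an interval $[\ell_i,r_i]$ containing $i$. The same holds for columns.

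Monotonicity also follows from the shadows. If $r_{i+1}<r_i$, then $a_{i+1,r_i}=0$ with $i+1<r_i$, and its shadow forces $a_{i,r_i}=0$, a contradiction. The other three monotonicity conditions follow symmetrically. So the pattern is double echelon.

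\emph{Sufficiency.} Given a double-echelon pattern with row intervals $[\ell_i,r_i]$, I build
\[
A=U(w_-)^T\,U(w_+)
\]
with all parameters one. This $A$ is nonsingular TN with determinant one.

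For the upper side, let $w_+$ be the concatenation, for $i=n-1,n-2,\ldots,1$ in this order, of the increasing blocks $(i,i+1,\ldots,r_i-1)$. A block is empty when $r_i=i$. I claim that $(i,\ldots,j-1)$ occurs as a subsequence of $w_+$ exactly when $j\le r_i$.
\begin{itemize}
\item If $j\le r_i$, the block for $i$ supplies the subsequence.
\item Conversely, suppose such a subsequence exists. Its first term $i$ lies in the block of some $i'\le i$. Within one block it can only climb up to $r_{i'}-1$. To continue it must pass to a later block, whose start $i''<i'$ is smaller. Monotonicity of the $r$'s gives $r_{i''}\le r_{i'}\le r_i$, so the run ends at most at $r_i-1$, i.e. $j\le r_i$.
\end{itemize}
By \eqref{eq:1}, $U(w_+)$ therefore has upper support exactly $\{(i,j):i\le j\le r_i\}$.

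For the lower side, I apply the same construction to the column intervals of the pattern. Their lower endpoints are monotone, so this produces $U(w_-)$ with the required transposed support.

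The argument in the proof of Lemma~\ref{lem:5} then shows that the upper and lower supports of $A$ are those of $U(w_+)$ and $U(w_-)^T$. That argument uses only \eqref{eq:1} and positivity of the terms, not the hypothesis that every index occurs. Hence the positive entries of $A$ are exactly the $+$-positions of the pattern.

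The step needing most care is the converse direction of the subsequence claim for $w_+$. It is exactly where the monotonicity of the row endpoints enters, and it is also where one must check that empty blocks cause no trouble.
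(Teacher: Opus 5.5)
Your proof is correct. The paper gives no argument for Lemma~\ref{lem:11}; it imports the result from Fallat--Johnson, so your self-contained proof is a genuinely different route.

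Your necessity half is the standard $2\times2$ shadow argument. It is made especially clean by the positive diagonal that Lemma~\ref{lem:1} supplies in the nonsingular case.

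Your sufficiency half goes beyond what the paper does. You realize the pattern inside the paper's own framework, as $U(w_-)^TU(w_+)$ with unit parameters, using \eqref{eq:1} and the support computation from the proof of Lemma~\ref{lem:5}. You correctly observe that this computation does not need every index to occur. The monotonicity of the row endpoints $r_i$, and of the bottom column endpoints for $w_-$, is exactly what stops the concatenated blocks from producing spurious consecutive subsequences. Your check of this handles empty blocks properly. To finish the match with the pattern, note that the diagonal of $A$ is positive, either from the term $r=i$ or from Lemma~\ref{lem:1}.

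What your route buys:
\begin{itemize}
\item The paper becomes self-contained at this point.
\item Every double-echelon pattern with full diagonal gets an integer realization of determinant one, in keeping with the paper's emphasis on such realizations.
\item Under condition (ii) of Proposition~\ref{thm:10}, the positions with $|i-j|\le1$ force $r_i\ge i+1$, and likewise for the column endpoints. So every index occurs in $w_\pm$, and Lemma~\ref{lem:5} gives an oscillatory integer realization of determinant one directly.
\end{itemize}
The citation, by contrast, keeps the paper short and relies on the standard reference.
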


\begin{proposition}
	\label{thm:10}
	Let \(\mathcal S\) be an \(n\)-by-\(n\) zero-nonzero pattern, with \(+\)
	denoting a prescribed nonzero position.  The following are equivalent:
	\begin{enumerate}
		\item $\mathcal S$ is the support pattern of an oscillatory matrix;
		\item $\mathcal S$ is double echelon and every position satisfying
		$|i-j|\le1$ is present;
		\item $\mathcal S$ is the support pattern of the inverse of an
		oscillatory matrix.
	\end{enumerate}
Consequently, if \(A\) is oscillatory, both \(A\) and \(A^\vee\) have
double-echelon support, and
\[
a_{ij}>0,\qquad
(A^\vee)_{ij}=(-1)^{i+j}(A^{-1})_{ij}>0
\qquad (|i-j|\le1).
\]
Both support digraphs \(\vec G(A)\) and \(\vec G(A^{-1})\) contain the
bidirected path \(1\leftrightarrow2\leftrightarrow\cdots\leftrightarrow n\).
\end{proposition}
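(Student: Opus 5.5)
The plan is to prove (1)$\Rightarrow$(2)$\Rightarrow$(1) with Lemma~\ref{lem:11} and Theorem~\ref{thm:1}, and to get (1)$\Leftrightarrow$(3) from the checkerboard involution \(A\mapsto A^\vee\) of Proposition~\ref{thm:3}. The consequences will then follow by applying (1)$\Rightarrow$(2) to \(A\) and to \(A^\vee\).

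For (1)$\Rightarrow$(2), let \(A\) be oscillatory. It is nonsingular TN, so by Lemma~\ref{lem:1} its diagonal entries are positive principal minors. Hence its support is a \((0,+)\)-pattern containing the diagonal and is realized by a nonsingular TN matrix. Lemma~\ref{lem:11} then says this support is double echelon. Theorem~\ref{thm:1} supplies the positive first off-diagonals, so every position with \(|i-j|\le1\) is present.

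For (2)$\Rightarrow$(1), Lemma~\ref{lem:11} gives a nonsingular TN matrix whose positive entries are exactly the \(+\)-positions of \(\mathcal S\). That matrix has positive entries at all \(|i-j|=1\), so Theorem~\ref{thm:1} makes it oscillatory, with support \(\mathcal S\).

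For (1)$\Leftrightarrow$(3), the key point is that \(D\) is diagonal with entries \(\pm1\). Therefore \(\operatorname{supp}A^{-1}=\operatorname{supp}(DA^{-1}D)=\operatorname{supp}A^\vee\). If \(\mathcal S=\operatorname{supp}A^{-1}\) with \(A\) oscillatory, then \(A^\vee\) is oscillatory by Proposition~\ref{thm:3}, so \(\mathcal S\) is the support of an oscillatory matrix. Conversely, if \(\mathcal S=\operatorname{supp}B\) with \(B\) oscillatory, put \(A=B^\vee\). It is oscillatory, and since \((B^\vee)^\vee=B\) we get \(A^\vee=B\), so \(\operatorname{supp}A^{-1}=\operatorname{supp}B=\mathcal S\).

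For the consequences, apply (1)$\Rightarrow$(2) to \(A\) and to \(A^\vee\), both oscillatory. Both supports are then double echelon. Their diagonal and first off-diagonal entries are positive: by Lemma~\ref{lem:1} for the diagonal and by Theorem~\ref{thm:1} for \(|i-j|=1\). The identity \((A^\vee)_{ij}=(-1)^{i+j}(A^{-1})_{ij}\) is just the definition of \(D\). Finally, the \((i,i\pm1)\) entries of \(A\) and of \(A^{-1}\) are nonzero, which gives the bidirected path in both support digraphs.

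The main point needing care is the one subtlety of the argument: Lemma~\ref{lem:11} concerns positive entries of a TN matrix, whereas (3) involves a matrix with entries of mixed sign. Passing to \(A^\vee\) resolves this, because the nonzero pattern is unchanged and only signs flip, and that is the step I would verify first. Everything else is direct citation.
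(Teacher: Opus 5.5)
Your proposal is correct and follows essentially the same route as the paper: Lemma~\ref{lem:11} together with Theorem~\ref{thm:1} (and Lemma~\ref{lem:1} for the diagonal) gives (1)\(\Leftrightarrow\)(2), and the checkerboard involution gives (1)\(\Leftrightarrow\)(3) via \(\operatorname{supp}A^{-1}=\operatorname{supp}A^\vee\). The consequences then follow, as you say, by applying (1)\(\Rightarrow\)(2) to both \(A\) and \(A^\vee\).
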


\begin{proof}
	Suppose first that $\mathcal S$ is realized by an oscillatory matrix
	$A$.  Then $A$ is nonsingular and TN.  Its principal minors are
	positive, so every diagonal position is present.  Lemma~\ref{lem:11}
	shows that its support is double echelon.  The oscillatory criterion gives
	\[
	a_{i,i+1}>0,
	\qquad
	a_{i+1,i}>0,
	\qquad
	i=1,\ldots,n-1,
	\]
	and nonsingularity of a TN matrix gives $a_{ii}>0$.  Thus (i) implies
	(ii).
	
	Conversely, suppose (ii) holds.  Lemma~\ref{lem:11} supplies a
	nonsingular TN matrix $B$ with exactly this support.  Its prescribed
	first off-diagonal entries are positive.  Hence $B$ is oscillatory by
	Theorem~\ref{thm:1}.  This proves (ii) implies
	(i).
	
	If $B$ realizes $\mathcal S$, put $A=B^\vee$.  Checkerboard inverse
	duality shows that $A$ is oscillatory, and
	\[
	A^{-1}=DBD
	\]
	has the same support as $B$.  Thus (i) implies (iii).  Conversely, if
	$A^{-1}$ has pattern $\mathcal S$, then $A^\vee=DA^{-1}D$ is
	oscillatory and has the same support, proving (iii) implies (i).
	
	Finally, applying the already proved assertions to $A$ and $A^\vee$
	gives the stated consequences.  Since
	\[
	(A^\vee)_{ij}
	=
	(-1)^{i+j}(A^{-1})_{ij},
	\]
	the matrices $A^\vee$ and $A^{-1}$ have the same zero-nonzero pattern.
\end{proof}

\subsection{A cofactor obstruction}

\begin{lemma}\label{lem:10}
Let \(A\) be nonsingular, let \(m\ge1\) be an integer, and put \(T=A^{-1}\).
If \(\operatorname{dist}_{\vec G(T)}(i,j)>m\), then
\[
\det\bigl((A^m)[\{j\}^c,\{i\}^c]\bigr)=0.
\]
\end{lemma}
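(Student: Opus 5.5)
The approach is the adjugate formula. Since \(A\) is nonsingular, \(A^m\) is nonsingular, and the cofactor identity gives
\[
\det\bigl((A^m)[\{j\}^c,\{i\}^c]\bigr)
=(-1)^{i+j}\det(A^m)\,\bigl((A^m)^{-1}\bigr)_{ij}
=(-1)^{i+j}\det(A^m)\,(T^m)_{ij}.
\]
This uses \((M^{-1})_{ij}=(-1)^{i+j}\det M[\{j\}^c,\{i\}^c]/\det M\). It therefore suffices to show that \((T^m)_{ij}=0\) whenever \(\operatorname{dist}_{\vec G(T)}(i,j)>m\).

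The next step is the standard walk expansion. We have
\[
(T^m)_{ij}=\sum_{k_1,\ldots,k_{m-1}}t_{ik_1}t_{k_1k_2}\cdots t_{k_{m-1}j}.
\]
A term is nonzero only if every factor \(t_{k_sk_{s+1}}\) is nonzero, with \(k_0=i\) and \(k_m=j\). Consecutive equal indices use diagonal entries; these are loops and are not edges of \(\vec G(T)\). Deleting these stationary steps leaves a directed path in \(\vec G(T)\) from \(i\) to \(j\) of length at most \(m\). This contradicts the distance hypothesis, so every term vanishes and \((T^m)_{ij}=0\).

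The case \(i=j\) needs no work, since \(\operatorname{dist}(i,i)=0\le m\) and the hypothesis cannot hold. Unlike the positive setting, no sign argument is needed here: \(T\) may have entries of mixed sign, but each term is individually zero. The only point requiring care is the index convention in the cofactor formula, namely that the submatrix deletes row \(j\) and column \(i\). This matches the statement, so there is no real obstacle.
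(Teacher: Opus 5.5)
Your proof is correct and is essentially the paper's argument. The paper also observes that a nonzero term of \((T^m)_{ij}\) gives a directed walk of length at most \(m\) once diagonal steps are omitted, so \((T^m)_{ij}=0\), and then uses the cofactor formula for \((A^m)^{-1}=T^m\) to conclude that the minor vanishes. One small wording point: what remains after deleting the stationary steps is a walk rather than a path, but it contains a path of no greater length, so the contradiction with the distance hypothesis still holds.
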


\begin{proof}
A nonzero term in \((T^m)_{ij}\) would give a directed walk from \(i\)
to \(j\) of length at most \(m\), after omitting diagonal steps.
The distance assumption therefore gives \((T^m)_{ij}=0\).
Since \(T^m=(A^m)^{-1}\), the cofactor formula yields
\[
0=(T^m)_{ij}=(-1)^{i+j}
\frac{\det\bigl((A^m)[\{j\}^c,\{i\}^c]\bigr)}{\det(A^m)}.
\]
The denominator is nonzero because \(A\) is nonsingular.
\end{proof}

Thus a large directed distance in the inverse support graph gives an
explicit complementary minor witnessing failure of total positivity.

\subsection{Endpoint rigidity}

Fallat and Liu's maximal-exponent criterion
\cite[Theorem~10]{FallatLiu} uses vanishing corner minors of orders
one and \(n-1\); see \cite[Lemma~8]{FallatLiu}.
These two orders admit the following support characterizations.

An upper Hessenberg matrix satisfies \(a_{ij}=0\) for \(i>j+1\); a lower
Hessenberg matrix satisfies \(a_{ij}=0\) for \(j>i+1\).

\begin{theorem}
	\label{thm:11}
	Let \(A\) be an \(n\times n\) oscillatory matrix.  Then
	\[
	e_1(A)=n-1
	\quad\Longleftrightarrow\quad
	A\text{ is upper Hessenberg or lower Hessenberg}.
	\]
	Moreover,
	\[
	e_{n-1}(A)=n-1
	\quad\Longleftrightarrow\quad
	A^{-1}\text{ is upper Hessenberg or lower Hessenberg}.
	\]
\end{theorem}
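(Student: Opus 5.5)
The plan is to prove the first equivalence with the support-graph formula for order-one corner times, and then to obtain the second equivalence from the first by checkerboard duality.

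First I recall that \(e_1(A)=\max\{\tau_1^{\mathrm{UR}},\tau_1^{\mathrm{LL}}\}\) by Corollary~\ref{thm:2}. Section~\ref{sec:2} identifies \(\tau_1^{\mathrm{UR}}(A)=\operatorname{dist}_{\vec G(A)}(1,n)\) and \(\tau_1^{\mathrm{LL}}(A)=\operatorname{dist}_{\vec G(A)}(n,1)\). By Proposition~\ref{thm:10}, \(\vec G(A)\) contains the bidirected path \(1\leftrightarrow\cdots\leftrightarrow n\), so both distances are at most \(n-1\).

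Next I show that \(\operatorname{dist}(1,n)=n-1\) if and only if \(A\) is lower Hessenberg, meaning \(a_{ij}=0\) whenever \(j>i+1\).

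If \(A\) is lower Hessenberg, every edge \(i\to j\) satisfies \(j\le i+1\). A walk from \(1\) to \(n\) must therefore increase the index by at most one per step, so it has length at least \(n-1\).

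Conversely, suppose some \(a_{ij}\ne0\) with \(j\ge i+2\). Then the path \(1\to2\to\cdots\to i\to j\to j+1\to\cdots\to n\) uses the path edges together with the shortcut, and its length is \(n-1-(j-i-1)<n-1\). This direction is only an inequality, so no double-echelon structure is needed.

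Symmetrically, \(\operatorname{dist}(n,1)=n-1\) exactly when \(A\) is upper Hessenberg. Hence \(e_1(A)=n-1\) if and only if at least one distance equals \(n-1\), which happens if and only if \(A\) is upper or lower Hessenberg. This proves the first equivalence.

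For the second equivalence, I apply Proposition~\ref{thm:3}, which gives \(e_{n-1}(A)=e_1(A^\vee)\). The matrix \(A^\vee\) is oscillatory, so the first part shows that \(e_{n-1}(A)=n-1\) if and only if \(A^\vee\) is upper or lower Hessenberg. Since \(A^\vee=DA^{-1}D\) has the same zero pattern as \(A^{-1}\), being Hessenberg transfers directly between \(A^\vee\) and \(A^{-1}\), which completes the proof.

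The only point that needs care is the claim that \(\tau_1\) equals the graph distance. This rests on two facts: entries of powers of TN matrices are sums of nonnegative walk products, and the diagonal is positive, so a walk can be padded with stationary steps. Both are recorded in Section~\ref{sec:2}, so I do not expect a real obstacle.
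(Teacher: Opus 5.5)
Your proof is correct and follows the paper's argument essentially step for step. You use the same steps: the bidirected natural path bounds both support-graph distances by \(n-1\), a shortcut edge is played against Hessenberg support in each direction, and checkerboard duality with \(A^\vee=DA^{-1}D\) (same zero pattern as \(A^{-1}\)) gives the second equivalence.
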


\begin{proof}
	The support digraph of \(A\) contains both orientations of the natural
	path \(1,2,\ldots,n\).  Hence the directed distances from \(1\) to \(n\)
	and from \(n\) to \(1\) are at most \(n-1\).

	The first distance equals \(n-1\) if and only if there is no forward edge
	\(i\to j\) with \(j\ge i+2\).  Indeed, such an edge, together with the
	natural path before and after it, gives a route of length less than
	\(n-1\); conversely, in its absence every directed step can increase the
	index by at most one.  Thus
	\[
	\operatorname{dist}(1,n)=n-1
	\quad\Longleftrightarrow\quad
	A\text{ is lower Hessenberg}.
	\]
	Similarly,
	\[
	\operatorname{dist}(n,1)=n-1
	\quad\Longleftrightarrow\quad
	A\text{ is upper Hessenberg}.
	\]
	The corner formula at order one proves the first equivalence.

	By checkerboard duality,
	\[
	e_{n-1}(A)=e_1(A^\vee),
	\qquad A^\vee=DA^{-1}D.
	\]
	Conjugation by \(D\) preserves the zero pattern, so the first equivalence
	applied to \(A^\vee\) gives the second.
\end{proof}

\begin{corollary}
	\label{prop:2}
	Let \(A\) be symmetric and oscillatory.  If \(e_1(A)=n-1\), then \(A\)
	is irreducible tridiagonal and
	\[
	e_k(A)=n-k,
	\qquad k=1,\ldots,n-1.
	\]
	If \(e_{n-1}(A)=n-1\), then \(A^{-1}\) is irreducible tridiagonal and
	\[
	e_k(A)=k,
	\qquad k=1,\ldots,n-1.
	\]
\end{corollary}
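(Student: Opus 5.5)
By Theorem~\ref{thm:11}, the hypothesis \(e_1(A)=n-1\) means that \(A\) is upper or lower Hessenberg. Because \(A=A^T\), both conditions then hold. An upper Hessenberg matrix has \(a_{ij}=0\) for \(i>j+1\). Its transpose is lower Hessenberg and equals \(A\), so \(a_{ij}=0\) for \(j>i+1\) as well. Hence \(A\) is tridiagonal. By Theorem~\ref{thm:1}, its first super- and subdiagonal entries are positive, so \(A\) is irreducible tridiagonal. Corollary~\ref{thm:9} then gives \(e_k(A)=n-k\) for \(1\le k<n\).

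For the second assertion, \(e_{n-1}(A)=n-1\) gives, by the second part of Theorem~\ref{thm:11}, that \(A^{-1}\) is upper or lower Hessenberg. Since \(A\) is symmetric, \(A^{-1}\) is symmetric too, and the same transpose argument shows that \(A^{-1}\) is tridiagonal. Irreducibility follows from Proposition~\ref{thm:10}, which says \((A^{-1})_{ij}\neq0\) whenever \(|i-j|\le1\). Alternatively, apply Theorem~\ref{thm:1} to the oscillatory matrix \(A^\vee=DA^{-1}D\), which has the same zero pattern as \(A^{-1}\). The second half of Corollary~\ref{thm:9} then gives \(e_k(A)=k\) for \(1\le k<n\).

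Another route is to apply the first part to the symmetric oscillatory matrix \(A^\vee\), whose inverse is \(DAD\), and then use Proposition~\ref{thm:3}. Either way works, and neither step is a genuine obstacle. The only point needing care is that symmetry turns the one-sided Hessenberg condition into both conditions, so the matrix is tridiagonal.
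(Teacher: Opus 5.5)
Your proposal is correct and follows the paper's own proof. You use Theorem~\ref{thm:11} for the Hessenberg condition, symmetry to upgrade it to tridiagonality (for \(A\) or for \(A^{-1}\)), oscillation for irreducibility, and Corollary~\ref{thm:9} for the two profile formulas.
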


\begin{proof}
	A symmetric upper or lower Hessenberg matrix is tridiagonal.  Theorem
	\ref{thm:11} therefore gives the asserted sparsity of
	\(A\), or of the symmetric matrix \(A^{-1}\), respectively.  Irreducibility
	follows from oscillation, and the profile formulas are
	Corollary~\ref{thm:9}.
\end{proof}

\section{Conclusion}

The basic profiles are characterized by the bounds
\(e_k\le\max\{k,n-k\}\), the complementary inequalities
\(e_k+e_{n-k}\ge n\), and the adjacent inequalities
\(\lvert e_{k+1}-e_k\rvert\le1\).
For symmetric oscillatory matrices the complementary sums are at most
\(n\), and the symmetric basic profiles are exactly the maximal
profiles. The sharper total bound
\(\sum_{k=1}^{n-1}e_k(A)+\ell(U)\le n(n-1)/2+n-1\) is attained
at each possible factor count. Equality is decided by
\(\ell(U^2)-\ell(U)\) and the number of zero upper-right corner minors
of \(A\); the increments of \(\ell(U^r)\) are nonincreasing.
In particular, a total sum of \(n(n-1)/2\) characterizes the
symmetric basic matrices.

At fixed order, the matrix--inverse bandwidth pairs are exactly
\(\mathcal P_n\), with minimum upper factor count
\(\max\{n-1,p+q-1\}\). When \(p+q\ge n\), the same construction
that attains this count also attains every coordinate of the upper
bound \eqref{eq:11}. Together with the basic realizations for
\(p+q\le n\), this gives the maximum exponent sum at every
admissible pair in the closed form of Corollary~\ref{cor:8}. The two triangular bandwidth pairs can also be
prescribed independently. Banded total positivity and checkerboard
inversion give the complementary profiles
\(\lceil(n-k)/b\rceil\) and \(\lceil k/b\rceil\), where \(b\) is
the smaller of the lower and upper bandwidths of the banded matrix.
Maximal endpoint exponents are characterized by Hessenberg support.

\end{document}